\documentclass[11pt,reqno]{amsart}

\usepackage{amsmath,amssymb,amsfonts,amsthm,mathtools,mathrsfs}
\usepackage{aliascnt}
\usepackage{bm}
\usepackage{enumitem}
\usepackage[margin=1in]{geometry}
\usepackage[dvipsnames]{xcolor}
\usepackage[colorlinks=true,linkcolor=RoyalBlue,citecolor=red,urlcolor=RoyalBlue]{hyperref}
\usepackage{microtype}

\numberwithin{equation}{section}
\allowdisplaybreaks

\newtheorem{theorem}{Theorem}[section]
\newaliascnt{proposition}{theorem}
\newtheorem{proposition}[proposition]{Proposition}
\aliascntresetthe{proposition}
\newaliascnt{lemma}{theorem}
\newtheorem{lemma}[lemma]{Lemma}
\aliascntresetthe{lemma}
\newaliascnt{corollary}{theorem}
\newtheorem{corollary}[corollary]{Corollary}
\aliascntresetthe{corollary}
\newaliascnt{definition}{theorem}
\newtheorem{definition}[definition]{Definition}
\aliascntresetthe{definition}
\newaliascnt{remark}{theorem}
\newtheorem{remark}[remark]{Remark}
\aliascntresetthe{remark}

\newcommand{\R}{\mathbb R}
\newcommand{\C}{\mathbb C}
\newcommand{\cS}{\mathcal S}
\newcommand{\cH}{\mathcal H}
\newcommand{\cG}{\mathcal G}
\newcommand{\cW}{\mathcal W}
\newcommand{\cL}{\mathcal L}
\newcommand{\cR}{\mathcal R}
\newcommand{\cT}{\mathcal T}
\newcommand{\cN}{\mathcal N}
\newcommand{\cQ}{\mathcal Q}
\newcommand{\fkX}{\mathfrak X}
\newcommand{\cP}{\mathcal P}
\newcommand{\cM}{\mathcal M}
\newcommand{\fM}{\mathfrak M}
\newcommand{\dd}{\,\mathrm d}
\newcommand{\la}{\langle}
\newcommand{\ra}{\rangle}
\newcommand{\norm}[1]{\lVert #1\rVert}
\newcommand{\abs}[1]{\lvert #1\rvert}
\DeclareMathOperator{\supp}{supp}
\DeclareMathOperator{\dist}{dist}
\renewcommand{\Re}{\operatorname{Re}}

\begin{document}

\title[Modified Scattering and Asymptotics for Perturbed One-Dimensional Cubic NLS]{Modified Scattering and Asymptotics for Perturbed One-Dimensional Cubic NLS}

\author[Vinh Nguyen]{Vinh Nguyen$^1$}
\address{
$^1$Department of Mathematics, University of California, Berkeley, CA 94704, USA}
\email{vnguyen26@berkeley.edu}

\author[Avy Soffer]{Avy Soffer$^2$}
\address{$^2$Department of Mathematics, Rutgers University, Piscataway, NJ 08854, USA}
\email{soffer@math.rutgers.edu}

\author[Truong Vu]{Truong Vu$^{3,4}$}

%\author[Truong Vu]{Truong Vu$^{2,3}$}
\address{$^3$Departments of Mathematics and Computational Medicine, University of California, Los Angeles, CA 90095, USA}
\address{$^4$Applied Mathematics and Computational Research (AMCR) division, Lawrence Berkeley National Laboratory, Berkeley, CA 94720, USA}
\email{truongvu@math.ucla.edu}

\date{\today}
\subjclass[2020]{35Q55, 35P25, 35B40, 35C20}
\keywords{Nonlinear Schr\"odinger equation, modified scattering, asymptotic completeness, localized nonlinearity, wave operators}

\thanks{\textbf{Acknowledgment.} A.S. was partially supported by NSF grant DMS-2205931 and Simons Travel Grant. V.N. and T.V. were partially supported by an AMS--Simons travel grant.}

\begin{abstract}
We study the long-time dynamics of small solutions to the one-dimensional nonlinear Schr\"odinger equation
\[
i\partial_t v+\partial_x^2v-\beta\abs{v}^2v
+\cW(x)\abs{v}^4v+\gamma i\partial_xv=0,
\]
where $\cW$ is spatially localized. The cubic nonlinearity is long range and produces the logarithmic phase correction, whereas the localized quintic term is short range at leading order. We construct a global forward modified wave operator for small complex asymptotic profiles and prove quantitative final-state estimates. For small data in the weighted energy space, we also establish global existence, sharp $t^{-1/2}$ decay, and forward modified scattering with a unique asymptotic profile. The principal new phenomenon occurs beyond this leading law. The exact Duhamel tail generated by the localized quintic term admits a quantitative inner scaling limit at the distinguished frequency $\zeta=-\gamma/2$ on the scale $\abs{\zeta+\gamma/2}\sim t^{-1/2}$. Its universal shape is explicit and depends on the value of the scattering profile on the distinguished ray and on the zeroth moment of $\cW$. When both quantities are nonzero, the limit is nontrivial, belongs optimally to $C^{2,1}_{\mathrm{loc}}$, and is not $C^3$ at the center. Away from the corresponding self-similar ray $\xi=-\gamma$, we construct rigorously defined higher-order outer expansions through every integer order allowed by the decay of $\cW$, and to each fixed finite order when $\cW$ is rapidly decreasing.
\end{abstract}

\maketitle
\tableofcontents

\section{Introduction}\label{sec:intro}

The long-time dynamics of one-dimensional nonlinear dispersive equations are strongly influenced by borderline interactions. For the cubic nonlinear Schr\"odinger equation with $\lambda_{\mathrm c}\in\R$,
\begin{equation}\label{eq:cubic-NLS-intro}
 i\partial_tu+\partial_x^2u=\lambda_{\mathrm c}\abs{u}^2u,
\end{equation}
the linear decay rate is $t^{-1/2}$. The cubic interaction therefore has effective size $t^{-1}$, which is not integrable at infinity. Small solutions do not scatter to a free solution with a time-independent profile. Instead, the leading asymptotic state acquires a logarithmic nonlinear phase.

\smallskip
The modified scattering theory for \eqref{eq:cubic-NLS-intro} has several
complementary formulations. Ozawa constructed modified wave operators in
\cite{O1991}. Hayashi et al. developed an early scattering theory for the
cubic equation and related Hartree models in \cite{HKN1998}, and Hayashi and
Naumkin subsequently established precise large-time asymptotics and
asymptotic completeness in \cite{HN1998}. Shimomura and Tonegawa obtained
further long-range scattering results in one and two dimensions in
\cite{ST2004}. Lindblad and Soffer treated a translation-invariant
cubic--quintic model by a direct construction in self-similar variables,
constructing modified wave operators, proving small-data completeness, and
developing higher-order asymptotics \cite{LS2006}. Fourier-space and
space-time resonance ideas were developed further by Kato and Pusateri in
\cite{KP2011}. Ifrim and Tataru obtained global bounds, sharp decay, and
modified scattering through testing by wave packets \cite{IT2015}. Their
later work \cite{IT2024} places this method in a general framework for
one-dimensional dispersive equations with cubic long-range interactions.

\smallskip
Murphy and Van Hoose adapted modified scattering to a dispersion-managed
model in \cite{MVH2022}, while Murphy and Zheng treated a cubic equation
with a time-dependent dispersion map in \cite{MZ2026}. Kawamoto and
Mizutani constructed modified wave operators for the defocusing cubic
equation with large scattering data in \cite{KM2025b}. Jendrej and Salvi
derived arbitrary-order expansions for small solutions of one-dimensional
gauge-invariant polynomial Schr\"odinger equations in \cite{JS2026}. Chen
showed that the fixed-profile formulation can fail at the unweighted $L^2$
endpoint in \cite{C2026}. For the integrable defocusing problem, Deift and
Zhou developed nonlinear steepest descent for weighted Sobolev data in
\cite{DZ2003}, and Dieng and McLaughlin refined the method using a
$\bar\partial$ formulation in \cite{DM2008}. Murphy surveys PDE and
integrable approaches in \cite{M2021}.

\smallskip
Deift and Zhou developed a perturbation theory for infinite-dimensional
integrable systems and analyzed the defocusing cubic NLS under a small
higher-power perturbation in \cite{DZ2002}. Their method evolves nonlinear
scattering data and allows weighted data that need not be small, while the
perturbation parameter depends on the size of those data. Chen, Liu, and
Tian treated a localized Schwartz coefficient and powers above the cubic
one in \cite{CLT2025}. They proved persistence of the leading integrable
asymptotics, with a uniform $O(t^{-3/4})$ remainder, when the perturbation
parameter is sufficiently small relative to the weighted scattering data.

These inverse-scattering results do not isolate the shrinking
$T^{-1/2}$ spectral window or the tail of the exact cubic-renormalized
Fourier profile studied here. Our direct argument permits either sign of
the cubic coefficient and a fixed localized coefficient, but it requires
small initial data. This smallness closes the contraction and wave-packet
bootstrap and is not asserted to be intrinsic in the defocusing regime.

\smallskip
A second line of work concerns one-dimensional equations with spatially
inhomogeneous linear Hamiltonians. Modified scattering in the presence of
external potentials has been studied in
\cite{GPR2018,MMS2019,CP2024,KM2025a,S2024}. Related scattering and
asymptotic-stability results for localized or singular potentials appear in
\cite{SS2025,CV2009,L2016,FV2018,BV2016,MMS2023}. These problems involve a
linear inhomogeneity and different spectral mechanisms, whereas the present
perturbation is nonlinear and localized only in the quintic interaction.

\smallskip
Spatially dependent nonlinearities form a related but distinct class. Genoud and Stuart studied bound states generated by decaying nonlinear coefficients in \cite{GS2008}. Aoki et al. treated critical inhomogeneous nonlinearities and modified wave operators in \cite{AIMMU2021}. Aloui and Tayachi proved global existence and scattering results for decaying inhomogeneities in \cite{AT2024}. Cui et al. established decay estimates and scattering criteria for one-dimensional inhomogeneous nonlinear Schr\"odinger equations in \cite{CLZ2026}. Baker and Murphy obtained large-data scattering results for one-dimensional inhomogeneous nonlinearities in \cite{BM2025}. Harrop-Griffiths et al. analyzed scattering when nonlinear effects are concentrated in space in \cite{HGKV2026}. Xie proved small-data modified scattering for a cubic equation containing both a repulsive point interaction and a localized inhomogeneous coefficient, and used the scattering map in an inverse problem \cite{X2026}. These results show that spatial localization can improve time integrability while introducing distinguished spatial or spectral regions.

\medskip
In this work, we study the equation
\begin{equation}\label{eq:main}
 i\partial_t v+\partial_x^2v-\beta\abs{v}^2v+\cW(x)\abs{v}^4v+\gamma i\partial_xv=0,
 \qquad t\geq1,\quad x\in\R,
\end{equation}
where $\beta,\gamma\in\R$ and $\cW$ is real-valued and smooth. We assume
that there exist $p>2$ and constants $C = C(\cW,j)>0$ such that, for every integer
$j\geq0$,
\begin{equation}\label{eq:W-assumption}
 \abs{\partial_x^j\cW(x)}\leq C\la x\ra^{-p-j},
 \qquad x\in\R.
\end{equation}
With the convention in
\eqref{eq:main}, the positive part of $\cW$ is focusing in the stationary
energy, while the negative part is defocusing.

\smallskip
For the self-similar analysis, it is convenient to use the translated coordinate
\[
 y=x-\gamma t,
 \qquad
 u(t,y)=v(t,y+\gamma t).
\]
Then $u$ solves
\begin{equation}\label{eq:comoving}
 i\partial_tu+\partial_y^2u-\beta\abs{u}^2u+\cW(y+\gamma t)\abs{u}^4u=0.
\end{equation}
To identify the large-time dynamics, we introduce the self-similar variables
\begin{equation}\label{eq:selfsimilar-transform-intro}
 s=t,
 \qquad
 \xi=\frac{y}{t},
 \qquad
 u(t,y)=t^{-1/2}e^{iy^2/(4t)}U(s,\xi).
\end{equation}
The profile $U$ satisfies
\begin{equation}\label{eq:selfsimilar-intro}
 i\partial_sU+s^{-2}\partial_\xi^2U-\beta s^{-1}\abs{U}^2U
 +s^{-2}\cW\bigl(s(\xi+\gamma)\bigr)\abs{U}^4U=0.
\end{equation}
The homogeneous cubic interaction carries the nonintegrable factor $s^{-1}$
and produces the logarithmic phase correction. The localized quintic
interaction carries the integrable factor $s^{-2}$ and is short range at
leading order. This remains true on the ray $\xi=-\gamma$, where the argument
of $\cW$ stays at the origin. The ray becomes relevant at finer scales because
differentiation in $\xi$ acts on $\cW\bigl(s(\xi+\gamma)\bigr)$.

\smallskip
These observations lead first to a modified final-state problem. Given a small complex profile $A$ with $\la\xi\ra A\in H^3$, we take
\[
 U_A(s,\xi)
 =A(\xi)\exp\left(-i\beta\abs{A(\xi)}^2\log s\right).
\]
We work directly with the complex profile $A$ rather than separating it into
amplitude and phase. This avoids a phase-normalization difficulty at zeros of
$A$, where neither $\abs A$ nor a global argument need have the regularity
required by the analysis. In the original variables the corresponding leading
state is
\begin{equation}\label{eq:vA-intro}
 v_A(t,x)=t^{-1/2}e^{i(x-\gamma t)^2/(4t)}
 A\left(\frac{x-\gamma t}{t}\right)
 \exp\left[-i\beta\left|A\left(\frac{x-\gamma t}{t}\right)\right|^2\log t\right].
\end{equation}
In \autoref{thm:wave-operator} we show that, for each fixed $\beta$, every sufficiently small profile in this weighted Sobolev class determines a global solution converging to \eqref{eq:vA-intro}, with a remainder of order $t^{-1}(1+\abs\beta\log t)^2$ in $H^1\cap L^\infty$. This construction defines the wave operator at $t=1$. The smallness condition controls the $t^{-1}$ cubic linearization, and conservation of mass and energy after the gauge transformation
\begin{equation*}
 v(t,x)=e^{-i\gamma x/2+i\gamma^2t/4}Q(t,x),
\end{equation*}
with $Q$ solving \eqref{eq:Q-stationary}, gives global continuation.

\smallskip
We next address the converse problem. For small initial data in the weighted
energy space $\Sigma$, \autoref{thm:forward-scattering} gives a unique global solution for
$\varepsilon=\norm{u(1)}_\Sigma$ sufficiently small, decay at the rate
$\norm{u(t)}_{L^\infty}\lesssim \varepsilon t^{-1/2}$, and a unique complex
profile $A$ describing the forward modified scattering. The resulting
scattering map is continuous, injective, and gauge equivariant on a sufficiently
small $\Sigma$-ball. We also prove the cubic-order linearization
\[
 A[u_1](\xi)
 =\frac{e^{-i\pi/4}}{2\sqrt\pi}
 e^{i\xi^2/4}\widehat{u_1}(\xi/2)
 +O_{L^2\cap L^\infty}(\norm{u_1}_\Sigma^3).
\]
This makes the condition $A(-\gamma)\neq0$ explicit and stable on an open
set of small data. The proof combines the Galilean vector field with the
wave-packet testing method of Ifrim and Tataru \cite{IT2015,IT2024}. The term
$\cW(y+\gamma t)\abs{u}^4u$ contributes integrable errors to the vector-field
estimates and the packet ordinary differential equation.

The localized coefficient reappears beyond the leading modified-scattering law. For $r\in\R$, suppressing the dispersive term in \eqref{eq:selfsimilar-intro} formally produces
\[
 \mathcal I_{\cW}(r)=\int_1^\infty s^{-2}\cW(sr)\dd s.
\]
As $r\to0$, this integral satisfies
\[
 \mathcal I_{\cW}(r)=\cW(0)+\cW'(0)r\log(1/\abs r)+O(\abs r).
\]
Thus $\mathcal I_{\cW}$ has an $r\log(1/\abs r)$ singularity when
$\cW'(0)\neq0$.
This formal expression does not describe the phase of a solution. On the
coefficient layer $\abs r\sim s^{-1}$, one has $\partial_\xi\sim s$, so the
term $s^{-2}\partial_\xi^2U$ is not perturbative.

Instead, in \autoref{sec:distinguished} we introduce the cubic-renormalized
interaction profile
\[
 \cP_u(t,\zeta)
 :=e^{it\zeta^2}\widehat u(t,\zeta)
 +i\beta\int_1^t e^{is\zeta^2}
 \widehat{\abs{u(s)}^2u(s)}(\zeta)\dd s.
\]
We prove that it converges and denote its limit by $\cP_{u,+}$. The profile
compensates the full cubic Duhamel increment and satisfies an
exact identity in which its tail $\cP_{u,+}-\cP_u(T)$ is given by the
localized quintic forcing. If $A$ is the modified scattering
profile from \autoref{thm:forward-scattering}, $A_0=A(-\gamma)$,
$\alpha=\beta\abs{A_0}^2$, and $M_0=\int_\R\cW(x)\dd x$, then near the distinguished
frequency $\zeta_0=-\gamma/2$ we prove the scaling limit
\[
 T^{3/2}e^{i\alpha\log T}
 \left[\cP_{u,+}-\cP_u(T)\right]
 \left(\zeta_0+\frac{\eta}{\sqrt T}\right)
 \longrightarrow
 iM_0\abs{A_0}^4A_0
 \int_1^\infty \tau^{-5/2-i\alpha}e^{i\eta^2\tau}\dd\tau.
\]
Thus the tail of $\cP_u$ admits a quantitative inner scaling limit of
width $T^{-1/2}$ in frequency, equivalently in self-similar velocity. When
$M_0A_0\neq0$, the limit is nontrivial and has a nonanalytic term
$\abs{\eta}^{3+2i\alpha}$. Its second derivative is locally Lipschitz, but its
third derivative does not extend continuously to $\eta=0$. The $L^2$ and
$L^\infty$ tail rates are $T^{-7/4}$ and $T^{-3/2}$, and both are optimal
when $M_0A_0\neq0$.

The terminal variable also has two complementary characterizations. It is
a fifth-order perturbation of the free interaction transform of the initial
datum, and it equals the stationary-phase image of $A$ plus a convergent
nonresonant cubic correction. If $p>4$ and the initial datum is Schwartz,
set
\begin{equation}\label{eq:C-alpha}
 C_\alpha
 :=\int_0^\infty \sigma^{-5/2-i\alpha}
 \left(e^{i\sigma}-1-i\sigma\right)\dd\sigma.
\end{equation}
We prove that $C_\alpha\neq0$ and that there are coefficients
$b_0,\ldots,b_3$ such that
\[
 \cP_{u,+}(\zeta_0+r)
 =\sum_{j=0}^3b_jr^j
 +iM_0\abs{A_0}^4A_0C_\alpha\abs r^{3+2i\alpha}
 +o(\abs r^3).
\]
Thus the actual terminal profile has a quadratic Peano approximation with
an $O(\abs r^3)$ remainder, but it has no third-order Taylor expansion when
$M_0A_0\neq0$. The small-data linearization above shows that this case
occurs for a relatively open set of Schwartz data.

\smallskip
This is the point at which our result differs from existing perturbed
modified-scattering theory. The works \cite{DZ2002,CLT2025} prove persistence
of the leading integrable asymptotics under higher-power perturbations, while
\cite{LS2006,JS2026} constructs higher-order outer expansions for
translation-invariant polynomial nonlinearities. They do not resolve a
time-dependent spectral window created by a localized nonlinear coefficient.
Results on the leading modified-scattering profile establish existence and
Sobolev or higher regularity under weighted hypotheses
\cite{O1991,HN1998,DZ2003,JS2026}, whereas \cite{C2026} concerns failure of
existence of a fixed profile at the unweighted $L^2$ endpoint. By contrast,
our exact cubic-renormalized terminal profile exists for small weighted data
but, even for Schwartz initial data, has no third-order Taylor expansion at
$\zeta=-\gamma/2$ when $p>4$ and $M_0A_0\neq0$. Its universal inner scaling
profile has the sharp regularity $C^{2,1}_{\mathrm{loc}}$ and is not $C^3$.
Thus the new issue is a localized, next-order loss of differentiability of an
existing asymptotic profile, not a failure of leading modified scattering.

The higher-order theorem concerns a different profile class. If $A$ is
compactly supported a positive distance from $-\gamma$, then the recursive
coefficients are exactly those of the translation-invariant cubic equation.
The decay of $\cW$ controls only the remainder. The construction reaches
every integer $N\leq\lfloor p\rfloor$, while a Schwartz coefficient is
invisible at every fixed algebraic order. The support condition forces
$A(-\gamma)=0$, so this global outer theorem and the nontrivial inner
theorem do not apply to the same profile. No matched inner--outer expansion
for one solution is claimed.

\medskip
The rest of the paper is organized as follows. In \autoref{sec:wave-operators} we
construct the modified wave operator and establish its basic mapping
properties. In \autoref{sec:forward-scattering} we prove small-data global
existence, decay, modified scattering, and the linearization of the
scattering map. In \autoref{sec:distinguished} we characterize the exact
cubic-renormalized profile, compute its inner tail, and prove the Peano
singularity of its terminal value. In \autoref{sec:higher} we identify the
outer coefficients with the pure-cubic Lindblad--Soffer expansion.

\smallskip
\textbf{Notations.}
All functions are complex-valued unless stated otherwise. We write
$\la x\ra=(1+x^2)^{1/2}$ and
$\norm{f}_q=\norm{f}_{L^q(\R)}$ for $1\leq q\leq\infty$. The relation
$X\lesssim Y$ means
$X\leq CY$ for a positive constant $C$. Such constants may change from line
to line, and their dependence is indicated when needed. We define
\begin{equation}\label{def:Sigma}
 \Sigma=\left\{f\in H^1(\R):xf\in L^2(\R)\right\},
 \qquad
 \norm{f}_\Sigma=\norm{f}_{H^1}+\norm{xf}_2.    
\end{equation}
Our Fourier transform convention is
\begin{equation}\label{eq:fourier-convention}
 \widehat f(\zeta)=\int_\R e^{-i\zeta y}f(y)\dd y.
\end{equation}
The space $C_0(\R)$ consists of continuous functions that vanish at infinity.
The space $\cS(\R)$ is the Schwartz class. We use
$\mathbb N=\{1,2,\ldots\}$, $\mathbb N_0=\{0,1,2,\ldots\}$, and write
$\mathbf 1_E$ for the indicator of a set $E$.
We write $C_{\mathrm{loc}}^{2,1}(\R)$ for the $C^2$ functions whose second
derivative is locally Lipschitz. For $\alpha\in\R$, complex powers are defined by
\[
 \abs{\eta}^{3+2i\alpha}
 :=\abs{\eta}^3e^{2i\alpha\log\abs{\eta}}\quad\text{for }\eta\neq0,
\]
and the value at $\eta=0$ is defined to be zero.

\section{A forward modified wave operator}\label{sec:wave-operators}
The final-state construction below adapts the self-similar formulation and
vanishing-final-data iteration used by Lindblad and Soffer for the
translation-invariant critical nonlinear Schr\"odinger equation
\cite[Sections~1--2]{LS2006}. We rederive the argument in the present
normalization and work directly with a complex asymptotic profile, without
requiring a global amplitude--phase decomposition. The moving coefficient
$\cW(y+\gamma t)$ is estimated in physical variables, since derivatives of
$\cW(t(\xi+\gamma))$ are not uniformly small near the distinguished ray
$\xi=-\gamma$.

\subsection{Local theory, conservation, and continuation}
\label{subsec:local-theory}

We use the definition of $\Sigma$ from \eqref{def:Sigma}, with $y$ as the
spatial variable. We introduce the Galilean vector field
\[
 L(t):=y+2it\partial_y.
\]
For smooth complex-valued functions $u$ and $F$, one has
\begin{align}
 [i\partial_t+\partial_y^2,L(t)]&=0,
 \label{eq:L-commute}\\
 L(t)\bigl(\abs{u}^2u\bigr)
 &=2\abs{u}^2L(t)u-u^2\overline{L(t)u},
 \label{eq:L-cubic}\\
 L(t)\bigl(\abs{u}^4u\bigr)
 &=3\abs{u}^4L(t)u
 -2\abs{u}^2u^2\overline{L(t)u},
 \label{eq:L-quintic}\\
 L(t)\left[\cW(y+\gamma t)F\right]
 &=\cW(y+\gamma t)L(t)F
 +2it\cW'(y+\gamma t)F.
 \label{eq:L-W}
\end{align}

We first record the local theory and the small-mass continuation criterion.

\begin{proposition}
\label{prop:local-theory}
Let $\beta,\gamma\in\R$, and let $\cW$ be real-valued and satisfy
\eqref{eq:W-assumption}.

\begin{enumerate}[label=\textup{(\roman*)}]
\item
For every $t_0\geq1$ and $u_0\in H^1(\R)$, equation
\eqref{eq:comoving} has a unique maximal mild solution
\[
 u\in C\bigl([t_0,T_+);H^1(\R)\bigr)
 \cap C^1\bigl([t_0,T_+);H^{-1}(\R)\bigr),
\]
where $T_+\in(t_0,\infty]$. The solution depends continuously on $u_0$ in $H^1$. If
$T_+<\infty$, then
\begin{equation}\label{eq:H1-blowup-alternative}
 \limsup_{t\uparrow T_+}\norm{u(t)}_{H^1}=\infty.
\end{equation}
The analogous local existence, uniqueness, and continuation statement
holds backward from $t_0$. Moreover,
\begin{equation}\label{eq:local-mass-conservation}
 \norm{u(t)}_2=\norm{u_0}_2
\end{equation}
throughout the lifespan.

\item
If, in addition, the datum in part \textup{(i)} satisfies $u_0\in\Sigma$, then
the corresponding solution satisfies
\[
 u\in C\bigl([t_0,T_+);\Sigma\bigr),
\]
with continuous dependence in $\Sigma$. The maximal $\Sigma$ lifespan
coincides with the maximal $H^1$ lifespan. 

\item
Consider the stationary-coefficient equation
\begin{equation}\label{eq:Q-stationary}
 i\partial_tQ+\partial_x^2Q-\beta\abs{Q}^2Q
 +\cW(x)\abs{Q}^4Q=0.
\end{equation}
Every $H^1$ solution conserves its mass and the energy
\begin{equation*}
 \operatorname{Mass}(Q)=\norm Q_2^2,
 \qquad
 E(Q)=\int_\R\left(
 \abs{\partial_xQ}^2+\frac{\beta}{2}\abs{Q}^4
 -\frac13\cW(x)\abs{Q}^6\right)\dd x.
\end{equation*}
There exists
\[
 m_0=m_0\bigl(\norm{\cW}_\infty\bigr)>0
\]
such that every $H^1$ solution of \eqref{eq:Q-stationary} satisfying
$\norm{Q(t_0)}_2\leq m_0$ at some time $t_0$ in its lifespan is global in
both time directions. More
precisely, throughout its lifespan,
\begin{equation}\label{eq:energy-coercive-small-mass}
 E(Q(t))
 \geq\frac12\norm{\partial_xQ(t)}_2^2
 -C\beta^2\norm{Q(t_0)}_2^6.
\end{equation}
\end{enumerate}
\end{proposition}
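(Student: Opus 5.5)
Part (i) will follow from the standard $H^1$ contraction in one dimension. Since $H^1(\R)\hookrightarrow L^\infty(\R)$ and $H^1$ is an algebra, the map
\[
 F(t,u)=-\beta\abs u^2u+\cW(y+\gamma t)\abs u^4u
\]
is locally Lipschitz on $H^1$. The Lipschitz constant depends on $\norm{\cW}_{W^{1,\infty}}$ and is uniform in $t$, because translating the coefficient does not change its $W^{1,\infty}$ norm. We then run a Picard iteration on the Duhamel formula
\[
 u(t)=e^{i(t-t_0)\partial_y^2}u_0-i\int_{t_0}^te^{i(t-s)\partial_y^2}F(s,u(s))\dd s
\]
in $C([t_0,t_0+\delta];H^1)$, using only that the free group is unitary on $H^1$. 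The existence time $\delta$ depends only on $\norm{u_0}_{H^1}$. This immediately gives:
\begin{itemize}
\item uniqueness, by a Gronwall argument on the difference of two solutions;
\item continuous dependence on $u_0$ in $H^1$;
\item the blow-up alternative \eqref{eq:H1-blowup-alternative}, by iterating from times near $T_+$;
\item the backward-in-time statement, by the same argument.
\end{itemize}
The regularity $C^1([t_0,T_+);H^{-1})$ is read off from the equation. For mass conservation, note that $F(t,u)\bar u$ has zero imaginary part pointwise, because $\cW$ and $\beta$ are real. So formally $\frac{d}{dt}\norm u_2^2=0$. To make this rigorous, we regularize $u_0$ in $H^2$, use persistence of $H^2$ regularity (the time derivatives of the coefficient are bounded by \eqref{eq:W-assumption}), and pass to the limit using continuous dependence. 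Alternatively, one can pair the equation with $u$ in the $H^{-1}$–$H^1$ duality.

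For part (ii) we apply the vector field $L(t)$ to the Duhamel formula. By \eqref{eq:L-commute}, $L$ commutes with the free group. By \eqref{eq:L-cubic}, \eqref{eq:L-quintic}, and \eqref{eq:L-W},
\[
 \norm{L(t)F}_2\lesssim \bigl(\norm u_\infty^2+\norm u_\infty^4\bigr)\norm{L u}_2+t\norm{\cW'}_\infty\norm u_\infty^4\norm u_2.
\]
This bound is linear in $Lu$ with coefficients controlled by the $H^1$ norm. Gronwall therefore keeps $\norm{L(t)u}_2$ finite on every compact subinterval of the $H^1$ lifespan, so the two lifespans coincide. Since $\norm{yu}_2\le\norm{Lu}_2+2t\norm{\partial_yu}_2$, this gives $u\in C([t_0,T_+);\Sigma)$. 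Continuity in time and continuous dependence in $\Sigma$ follow from the same estimates applied to differences. For the justification we again approximate by smooth, rapidly decaying data.

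For part (iii), energy conservation is the usual computation: $\cW(x)$ is time-independent, so
\[
 \frac{d}{dt}E(Q)=-2\Re\int \overline{\partial_tQ}\,\bigl(-\partial_x^2Q+\beta\abs Q^2Q-\cW\abs Q^4Q\bigr)\dd x=-2\Re\bigl(i\norm{\partial_tQ}_2^2\bigr)=0,
\]
justified through $H^2$ regularization as before; mass conservation follows as in (i). For coercivity, Gagliardo–Nirenberg in one dimension gives $\norm Q_4^4\lesssim\norm Q_2^3\norm{Q_x}_2$ and $\norm Q_6^6\lesssim\norm Q_2^4\norm{Q_x}_2^2$. With $m=\norm{Q(t)}_2=\norm{Q(t_0)}_2$ this yields
\[
 E\ge\norm{Q_x}_2^2-C\abs\beta m^3\norm{Q_x}_2-C\norm{\cW}_\infty m^4\norm{Q_x}_2^2.
\]
Choose $m_0$ so that $C\norm\cW_\infty m_0^4\le1/4$. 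Young's inequality then absorbs the cubic term: $C\abs\beta m^3\norm{Q_x}_2\le\frac14\norm{Q_x}_2^2+C\beta^2m^6$. This gives \eqref{eq:energy-coercive-small-mass}. Because $E$ is conserved, $\norm{Q_x(t)}_2^2\le 2E(Q(t_0))+C\beta^2m^6$ stays bounded, and the blow-up alternative for \eqref{eq:Q-stationary} (proved exactly as in (i)) gives global existence in both directions.

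The main obstacle is not the estimates but the rigorous justification of the conservation laws at the $H^1$ level, for which we rely on regularization. The only genuinely problem-specific point is the term $2it\cW'$ in \eqref{eq:L-W}. It grows linearly in $t$, but this growth is harmless on finite time intervals, which is all part (ii) requires.
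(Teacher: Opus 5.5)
Your proposal is correct and follows essentially the same route as the paper. The ingredients match: a uniform-in-$t$ $H^1$ contraction using the algebra property and unitarity of the free group, mass conservation via smooth approximation, Gronwall control of $L(t)u$ on compact subintervals of the $H^1$ lifespan, and Gagliardo--Nirenberg plus Young for the small-mass coercivity and global continuation. For $L(t)u$, the paper solves an auxiliary real-linear equation for $V_L$ and identifies it with $L(t)u$ by Schwartz approximation, which is equivalent to your direct application of $L$ to the Duhamel formula. The only slip is a harmless overall sign in your formula for $\tfrac{d}{dt}E(Q)$, which should be $+2\Re\int\overline{\partial_tQ}\,(-\partial_x^2Q+\beta\abs Q^2Q-\cW\abs Q^4Q)\dd x$; this is immaterial since the expression vanishes.
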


\begin{proof}
Write
\[
 \cW_t(y):=\cW(y+\gamma t),
 \qquad
 \mathfrak N_t(f):=\beta\abs{f}^2f-\cW_t\abs{f}^4f.
\]
The standing assumption implies that
$t\mapsto\cW_t$ is continuous in $W^{1,\infty}(\R)$. The algebra
property of $H^1(\R)$ and the embedding
$H^1(\R)\hookrightarrow L^\infty(\R)$ give, uniformly in $t$,
\begin{equation}\label{eq:local-nonlinear-H1}
 \norm{\mathfrak N_t(f)}_{H^1}
 \leq C_{\beta,\cW}
 \left(\norm f_{H^1}^3+\norm f_{H^1}^5\right).
\end{equation}
Indeed,
\[
 \norm{\partial_y(\abs{f}^2f)}_2
 \leq C\norm f_\infty^2\norm{\partial_yf}_2
\]
and
\[
 \norm{\partial_y(\cW_t\abs{f}^4f)}_2
 \leq \norm{\cW'}_\infty\norm f_\infty^4\norm f_2
 +C\norm{\cW}_\infty\norm f_\infty^4
 \norm{\partial_yf}_2.
\]
Factoring polynomial differences and applying the same estimates shows
that the map is locally Lipschitz. More precisely, if
$\max\{\norm f_{H^1},\norm g_{H^1}\}\leq R$, then
\begin{equation*}
 \norm{\mathfrak N_t(f)-\mathfrak N_t(g)}_{H^1}
 \leq C_{\beta,\cW}(R^2+R^4)\norm{f-g}_{H^1}.
\end{equation*}
Let $S(t):=e^{it\partial_y^2}$ and, for $T>0$, set
\[
 \mathcal X_T^{\mathrm{loc}}:=C\bigl([t_0,t_0+T];H^1(\R)\bigr),
 \qquad
 \norm f_{\mathcal X_T^{\mathrm{loc}}}
 :=\sup_{t_0\leq t\leq t_0+T}\norm{f(t)}_{H^1}.
\]
Define
\[
 \fM(f)(t)
 =S(t-t_0)u_0
 -i\int_{t_0}^tS(t-s)\mathfrak N_s(f(s))\dd s.
\]
Put $R_{\mathrm{loc}}=2\norm{u_0}_{H^1}$. The case $u_0=0$ is immediate.
Otherwise, choose $T>0$ such that
\begin{equation}\label{eq:local-time-choice}
 C_{\beta,\cW}T(R_{\mathrm{loc}}^2+R_{\mathrm{loc}}^4)\leq\frac14.
\end{equation}
Since $S(t)$ is unitary on $H^1$, equations
\eqref{eq:local-nonlinear-H1} and \eqref{eq:local-time-choice} imply
\[
 \norm{\fM(f)}_{\mathcal X_T^{\mathrm{loc}}}
 \leq \norm{u_0}_{H^1}
 +C_{\beta,\cW}T(R_{\mathrm{loc}}^3+R_{\mathrm{loc}}^5)
 \leq \norm{u_0}_{H^1}+\frac{R_{\mathrm{loc}}}{4}
 <R_{\mathrm{loc}}
\]
on the closed radius-$R_{\mathrm{loc}}$ ball in
$\mathcal X_T^{\mathrm{loc}}$. Likewise,
\[
 \norm{\fM(f)-\fM(g)}_{\mathcal X_T^{\mathrm{loc}}}
 \leq\frac14\norm{f-g}_{\mathcal X_T^{\mathrm{loc}}}.
\]
Banach's fixed-point theorem gives the asserted local solution. The
same estimates give uniqueness and continuous dependence. Since the
existence time depends only on an upper bound for the $H^1$ norm,
iteration gives \eqref{eq:H1-blowup-alternative}. Applying the same
argument on $[t_0-T,t_0]$ proves the backward statement. Finally,
mass conservation follows first for smooth solutions by multiplying
the equation by $\overline u$, integrating, and taking imaginary
parts. Approximation of $H^1$ data by smooth data and local continuous
dependence yield \eqref{eq:local-mass-conservation} for every mild
$H^1$ solution.

We next prove persistence of $\Sigma$. Suppose $u_0\in\Sigma$ and set
\[
 V_{L,0}=L(t_0)u_0=yu_0+2it_0\partial_yu_0\in L^2.
\]
For the already constructed $H^1$ solution, consider the real-linear
equation
\begin{equation}\label{eq:local-VL-equation}
\begin{aligned}
 (i\partial_t+\partial_y^2)V_L
 &=\beta\left(2\abs u^2V_L-u^2\overline {V_L}\right)
 -\cW_t\left(
 3\abs u^4V_L-2\abs u^2u^2\overline {V_L}\right)
-2it\cW_t'\abs u^4u,\\
  V_L(t_0)&=V_{L,0},
\end{aligned}    
\end{equation}
where $\cW_t'(y)=\cW'(y+\gamma t)$. Let $J$ be a compact subinterval of
the $H^1$ lifespan. The coefficients of $V_L$ and $\overline {V_L}$ belong
to $L^1_tL^\infty_y$, and the inhomogeneous term belongs to
$L^1_tL^2_y$. A short-time Duhamel contraction followed by
concatenation therefore gives a unique $V_L\in C(J;L^2(\R))$. Its $L^2$
energy inequality is
\begin{align}
 \frac{\dd}{\dd t}\norm{V_L(t)}_2
 &\leq C\big[
 \abs\beta\norm{u(t)}_\infty^2
 +\norm{\cW}_\infty\norm{u(t)}_\infty^4
 \big]\norm{V_L(t)}_2
 +2\abs t\norm{\cW'}_\infty
 \norm{u(t)}_\infty^4\norm{u(t)}_2.
 \label{eq:local-VL-energy}
\end{align}
Gronwall's inequality shows that $V_L$ remains finite on every compact
subinterval of the $H^1$ lifespan. More precisely, if $T_+<\infty$ and
$\sup_{t_0\leq t<T_+}\norm{u(t)}_{H^1}<\infty$, then the coefficients and the
source in \eqref{eq:local-VL-equation} are integrable on $[t_0,T_+)$. The
Duhamel formula and \eqref{eq:local-VL-energy} then show that $V_L(t)$ has an
$L^2$ limit as $t\uparrow T_+$.

For smooth data, identities
\eqref{eq:L-commute}--\eqref{eq:L-W} show directly that
$V_L=L(t)u$. For general $u_0\in\Sigma$, choose
$u_0^{(n)}\in\cS(\R)$ converging to $u_0$ in $\Sigma$. Local
continuous dependence gives
$u^{(n)}\to u$ in $C(J;H^1(\R))$ on a common interval $J$. The difference
version of \eqref{eq:local-VL-energy} then gives
$L(t)u^{(n)}\to V_L$ in $C(J;L^2(\R))$. Since
\[
 yu^{(n)}=L(t)u^{(n)}-2it\partial_yu^{(n)},
\]
we obtain, after passing to the limit,
\[
 yu=V_L-2it\partial_yu\in C(J;L^2(\R)).
\]
This proves persistence of $\Sigma$. The corresponding difference
estimates prove continuous dependence in $\Sigma$. If the $H^1$ norm stays
bounded at a finite endpoint, the endpoint argument above and
$yu=V_L-2it\partial_yu$ extend $yu$ continuously in $L^2$ to that endpoint.
Hence the maximal $\Sigma$ and $H^1$ lifespans coincide.

It remains to prove the stationary continuation assertion. The local
theory already established applies to \eqref{eq:Q-stationary} by
taking $\gamma=0$. For smooth solutions, direct differentiation gives
\[
 \frac{\dd}{\dd t}\operatorname{Mass}(Q(t))=0
\]
and
\[
 \frac{\dd}{\dd t}E(Q(t))
 =2\Re\int_\R
 \left(-Q_{xx}+\beta\abs{Q}^2Q-\cW\abs{Q}^4Q\right)
 \overline{Q_t}\dd x=0,
\]
because the expression in parentheses equals $iQ_t$. Both
conservation laws extend to $H^1$ solutions by smooth approximation. The
energy is continuous on $H^1(\R)$.

Set $m_Q=\norm{Q(t)}_2=\norm{Q(t_0)}_2$. The one-dimensional
Gagliardo--Nirenberg inequalities give
\[
 \norm Q_4^4\leq Cm_Q^3\norm{\partial_xQ(t)}_2,
 \qquad
 \norm Q_6^6\leq Cm_Q^4\norm{\partial_xQ(t)}_2^2.
\]
Consequently,
\[
 E(Q(t))\geq
 \left(1-C\norm{\cW}_\infty m_Q^4\right)
 \norm{\partial_xQ(t)}_2^2
 -C\abs\beta m_Q^3\norm{\partial_xQ(t)}_2.
\]
Choose $m_0>0$ so that
\[
 C\norm{\cW}_\infty m_0^4\leq\frac14.
\]
Young's inequality then gives, for $m_Q\leq m_0$,
\[
 E(Q(t))
 \geq\frac12\norm{\partial_xQ(t)}_2^2-C\beta^2m_Q^6,
\]
which is \eqref{eq:energy-coercive-small-mass}. Conservation of energy
therefore bounds $\norm{\partial_xQ(t)}_2$ uniformly throughout the
maximal lifespan. The blow-up alternative rules out either finite
endpoint and proves global existence in both time directions.
\end{proof}

\begin{remark}\label{rem:critical-quintic}
The localized quintic term is mass critical in one dimension. Let
$Q_{\mathrm{gs}}$ be the positive solution of
\[
 -Q_{\mathrm{gs}}''+Q_{\mathrm{gs}}-Q_{\mathrm{gs}}^5=0.
\]
The sharp Gagliardo--Nirenberg inequality \cite{W1983} is
\[
 \norm f_6^6
 \leq\frac{3}{\norm{Q_{\mathrm{gs}}}_2^4}
 \norm f_2^4\norm{f'}_2^2.
\]
Writing $\cW_+=\max\{\cW,0\}$, the stationary energy therefore satisfies
\[
 E(Q)\geq
 \left(1-\frac{\norm{\cW_+}_\infty\norm Q_2^4}
 {\norm{Q_{\mathrm{gs}}}_2^4}\right)\norm{Q_x}_2^2
 -C\abs\beta\norm Q_2^3\norm{Q_x}_2.
\]
When $\norm{\cW_+}_\infty>0$, this identifies the natural critical mass
scale
$\norm{Q_{\mathrm{gs}}}_2\norm{\cW_+}_\infty^{-1/4}$. The smaller number
$m_0$ used in \autoref{prop:local-theory} gives a convenient strict
coercivity constant and is not claimed to be the exact dynamical threshold
for a variable coefficient. If $\cW\leq0$, the quintic energy is
defocusing and this part of the continuation argument requires no mass
restriction.
\end{remark}

\begin{remark}\label{rem:general-localized-power}
The choice of a quintic perturbation is also tied to the vector-field
argument. For $m\in\mathbb N$ and a localized term
$\cW\abs{u}^{2m}u$, application of $L(t)$
produces $2it\cW'\abs{u}^{2m}u$. Under the linear decay rate, its $L^2$
norm is bounded by
\[
 Ct\norm u_\infty^{2m+1}
 \lesssim t^{-(2m-1)/2}.
\]
This is integrable by the present proof when $m\geq2$. The localized cubic
case $m=1$ needs a different vector-field estimate. When $m>2$, the
stationary energy is mass supercritical and the mass-only continuation
used above is no longer available. The quintic case $m=2$ is the unique
power for which both parts of the present argument close under the stated
hypotheses.

Once a corresponding global decay and modified-scattering theorem is
available, the inner calculation itself has a direct extension. For
$\cW\abs{u}^{2m}u$, the tail scale is $T^{-(2m-1)/2}$ and the limiting
shape is
\[
 \int_1^\infty
 \tau^{-(2m+1)/2-i\alpha}e^{i\eta^2\tau}\dd\tau.
\]
Its first nonsmooth term is a nonzero multiple of
$\abs\eta^{2m-1+2i\alpha}$. We restrict our results to $m=2$ because the
global arguments for the other powers are not the same.
\end{remark}

\subsection{The self-similar equation and the leading profile}

The transformation in \eqref{eq:selfsimilar-transform-intro} gives the identity
\begin{equation*}
 \bigl(i\partial_t+\partial_y^2\bigr)
 \left[t^{-1/2}e^{iy^2/(4t)}U(t,y/t)\right]
 =t^{-1/2}e^{iy^2/(4t)}
 \left[
 \left(i\partial_s+s^{-2}\partial_\xi^2\right)U(s,\xi)
 \right]_{(s,\xi)=(t,y/t)}.
\end{equation*}
Hence \eqref{eq:comoving} is equivalent to \eqref{eq:selfsimilar-intro}.

We define
\begin{equation*}
 \fkX:=\left\{A:\R\to\C:\la\xi\ra A\in H^3(\R)\right\},
 \qquad
 \norm A_{\fkX}:=\norm{\la\xi\ra A}_{H^3}.
\end{equation*}
Given $A\in\fkX$, define
\begin{equation*}
 U_A(s,\xi):=A(\xi)e^{-i\beta\abs{A(\xi)}^2\log s}
\end{equation*}
and
\begin{equation*}
 u_A(t,y):=t^{-1/2}e^{iy^2/(4t)}U_A(t,y/t).
\end{equation*}
The intrinsic size of the long-range $H^1$ linearization is measured by
\begin{equation}\label{eq:a-beta-A}
 \mathfrak a_\beta(A)
 :=\sup_{t\geq2}t^{1/2}
 \left(\norm{u_A(t)}_{L^\infty_y}
 +\norm{\partial_yu_A(t)}_{L^\infty_y}\right).
\end{equation}
This quantity is finite and
\begin{equation}\label{eq:a-beta-A-bound}
 \mathfrak a_\beta(A)
 \leq C_\beta\left(\norm A_{\fkX}+\norm A_{\fkX}^3\right).
\end{equation}
Indeed, differentiation of $u_A$ gives the exact formula
\begin{align}
 \mathfrak a_\beta(A)
 &=\norm A_\infty
 +\sup_{t\geq2}\left\|
 \frac{i\xi}{2}A+\frac1tA'
 -\frac{i\beta\log t}{t}(\abs A^2)'A
 \right\|_\infty.
 \notag
\end{align}
Since $\sup_{t\geq2}t^{-1}\log t<\infty$, the weighted Sobolev
embedding and the product rule imply that, for every $R>0$ and all
$A,B\in\fkX$ satisfying
$\norm A_{\fkX}+\norm B_{\fkX}\leq R$,
\begin{equation}\label{eq:a-beta-continuity}
 \left|\mathfrak a_\beta(A)-\mathfrak a_\beta(B)\right|
 \leq C_{\beta,R}\norm{A-B}_{\fkX}.
\end{equation}
The function $U_A$ solves the long-range profile equation
\begin{equation*}
 i\partial_sU_A-\beta s^{-1}\abs{U_A}^2U_A=0.
\end{equation*}
Therefore the residual of $u_A$ in \eqref{eq:comoving} is
\begin{equation}\label{eq:residual-def}
 \cR_A(t,y)=t^{-5/2}e^{iy^2/(4t)}
 \left[
 \partial_\xi^2U_A(t,\xi)
 +\cW\bigl(t(\xi+\gamma)\bigr)\abs{U_A(t,\xi)}^4U_A(t,\xi)
 \right]_{\xi=y/t}.
\end{equation}

For $t\geq2$, set
\begin{equation*}
 \ell_\beta(t)=1+\abs\beta\log t.
\end{equation*}

\begin{lemma}\label{lem:residual}
For every $A\in\fkX$, there is a constant
$C_{\mathrm{res}}=C_{\mathrm{res}}(A,\beta,\cW)$ such that
\begin{equation}\label{eq:residual-H1}
 \norm{\cR_A(t)}_{H^1_y}
 \leq C_{\mathrm{res}}\frac{\ell_\beta(t)^2}{t^2},
 \qquad t\geq2.
\end{equation}
Moreover
\begin{equation}\label{eq:uA-pointwise}
 \norm{u_A(t)}_{L^\infty_y}
 +\norm{\partial_yu_A(t)}_{L^\infty_y}
 \leq \mathfrak a_\beta(A)t^{-1/2},
 \qquad t\geq2.
\end{equation}
\end{lemma}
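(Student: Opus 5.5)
The pointwise bound \eqref{eq:uA-pointwise} needs no new work: it is the definition \eqref{eq:a-beta-A} of $\mathfrak a_\beta(A)$ read at a fixed $t\geq2$. The substance is the $H^1$ bound \eqref{eq:residual-H1}. I will split $\cR_A=\cR_A^{(1)}+\cR_A^{(2)}$ into the dispersive part (with $\partial_\xi^2U_A$) and the quintic part (with the $\cW$ factor), and estimate each in $L^2_y$ and after one $\partial_y$.

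\textbf{Dispersive part.} Expanding $U_A=Ae^{-i\beta\abs A^2\log t}$ gives
\[
 \partial_\xi^2U_A=e^{-i\beta\abs A^2\log t}\Bigl[A''-2i\beta\log t\,(\abs A^2)'A'-i\beta\log t\,(\abs A^2)''A-\beta^2\log^2t\,\bigl((\abs A^2)'\bigr)^2A\Bigr].
\]
This is a polynomial of degree at most $2$ in $\log t$, with coefficients built from $A,A',A''$. Set $F(t,\xi)=\partial_\xi^2U_A$. Changing variables $\xi=y/t$ gives
\[
 \norm{t^{-5/2}F(t,y/t)}_{L^2_y}=t^{-2}\norm{F(t)}_{L^2_\xi},
\]
and $\norm{F(t)}_{L^2_\xi}\lesssim\ell_\beta(t)^2$ follows from $A\in\fkX\subset W^{2,\infty}$ together with $A''\in L^2$. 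For the derivative, $\partial_y$ either falls on $e^{iy^2/(4t)}$, producing the factor $\tfrac{i\xi}{2}$, or on $F(t,y/t)$, producing $t^{-1}\partial_\xi F$.

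The first case is controlled by $\norm{\xi F}_{L^2}$, which is finite because of the weight $\la\xi\ra$ in $\fkX$. The second involves $A'''$ and an extra $\log t$ from differentiating the phase, but the factor $t^{-1}$ makes $t^{-1}\ell_\beta^3\lesssim\ell_\beta^2$. Here $\la\xi\ra A\in H^3$ is used in full, in particular $\la\xi\ra A''\in L^2$ and $A'''\in L^2$. Hence $\norm{\cR_A^{(1)}}_{H^1}\lesssim t^{-2}\ell_\beta^2$.

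\textbf{Quintic part.} This is the main obstacle. The $\xi$-derivatives of $\cW(t(\xi+\gamma))$ are of size $t$, so I will not differentiate in self-similar variables; I will work in physical variables instead. Write
\[
 \cR_A^{(2)}=\cW(y+\gamma t)\abs{u_A}^4u_A.
\]
For the $L^2$ norm, $\norm{\cR_A^{(2)}}_{2}\leq\norm{\cW}_{2}\norm{u_A}_\infty^5\lesssim t^{-5/2}$ by \eqref{eq:uA-pointwise}. For the derivative, the product rule gives
\[
 \partial_y\cR_A^{(2)}=\cW'(y+\gamma t)\abs{u_A}^4u_A+O\bigl(\cW\abs{u_A}^4\abs{\partial_yu_A}\bigr),
\]
which is bounded in $L^2$ by
\[
 \bigl(\norm{\cW'}_2+\norm{\cW}_2\bigr)\,\mathfrak a_\beta(A)^5\,t^{-5/2}.
\]
Both $\cW$ and $\cW'$ lie in $L^2$ by \eqref{eq:W-assumption}. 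This bound is better than required.

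Combining the two parts gives \eqref{eq:residual-H1}, with $C_{\mathrm{res}}$ depending on $\norm A_{\fkX}$, $\beta$, and $\cW$. The only delicate point is the bookkeeping of $\log t$ powers in the dispersive term; the quintic term is harmless precisely because the pointwise bounds \eqref{eq:uA-pointwise} on $u_A$ and $\partial_yu_A$ are available in physical variables.
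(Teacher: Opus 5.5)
Your proposal is correct and follows the paper's proof. The dispersive part is handled the same way: a polynomial of degree two in $\log t$, the change of variables $y=t\xi$, and the prefactor $t^{-1}$ in front of $\partial_\xi^3U_A$ absorbing one logarithm. The paper also differentiates the quintic term in the physical variable, so that $t^{-1}\partial_\xi\cW(t(\xi+\gamma))=\cW'(t(\xi+\gamma))$. Your only deviation is in how you pair factors in the quintic term. You use $\norm{\cW}_2,\norm{\cW'}_2$ together with the pointwise decay of $u_A$ and $\partial_yu_A$; this gives $t^{-5/2}$ and relies on the finiteness of $\mathfrak a_\beta(A)$ stated just before the lemma. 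The paper instead uses $\norm{\cW}_{W^{1,\infty}}$ together with $L^2_\xi$ norms of $A$ and $\xi A$, which gives a weaker $O(t^{-2})$-type bound that still suffices.
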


\begin{proof}
Set $\Theta_A(\xi)=\beta\abs{A(\xi)}^2.$
Direct differentiation gives
\begin{align}
 \partial_\xi U_A
 &=e^{-i\Theta_A\log t}
 \left[A'-i\Theta_A'A\log t\right],
 \notag\\
 \partial_\xi^2U_A
 &=e^{-i\Theta_A\log t}
 \left[A''-2i\Theta_A'A'\log t-i\Theta_A''A\log t
 -(\Theta_A')^2A(\log t)^2\right].
 \label{eq:UA-second-derivative}
\end{align}
Since $\la\xi\ra A\in H^3$, the product rule, the one-dimensional
Sobolev embedding, and a density argument give
\begin{align}
 \norm{\la\xi\ra^m\partial_\xi^kU_A(t)}_{L^2_\xi}
 &\leq C_{A,\beta,m,k}\ell_\beta(t)^k,
 &&m\in\{0,1\},\quad 0\leq k\leq3-m,
 \label{eq:UA-weighted-bounds}\\ \norm{\la\xi\ra^m\partial_\xi^kU_A(t)}_{L^\infty_\xi}
 &\leq C_{A,\beta,m,k}\ell_\beta(t)^k,
 &&(m,k)\in\{(0,0),(1,0),(0,1)\}.
 \notag
\end{align}
In particular, the formula for $\partial_yu_A$ below and the elementary
bound
\[
 \sup_{t\geq2}t^{-1}\log t<\infty
\]
imply \eqref{eq:a-beta-A-bound}.
For the residual write
\[
 B_A(t,\xi)=\partial_\xi^2U_A(t,\xi)
 +\cW\bigl(t(\xi+\gamma)\bigr)\abs{U_A(t,\xi)}^4U_A(t,\xi).
\]
Then \eqref{eq:residual-def} becomes
\[
 \cR_A(t,y)=t^{-5/2}e^{iy^2/(4t)}B_A(t,y/t).
\]
The change of variables $y=t\xi$ and \eqref{eq:UA-weighted-bounds} imply
\begin{equation}\label{eq:residual-L2-detailed}
 \norm{\cR_A(t)}_2=t^{-2}\norm{B_A(t)}_2
 \leq C_{\mathrm{res}} t^{-2}\ell_\beta(t)^2.
\end{equation}
For the derivative, it is preferable to differentiate in the physical variable before estimating. We obtain
\begin{equation}\label{eq:residual-y-derivative}
 \partial_y\cR_A
 =t^{-5/2}e^{iy^2/(4t)}
 \left[\frac{i\xi}{2}B_A+t^{-1}\partial_\xi B_A\right]_{\xi=y/t}.
\end{equation}
The potentially large chain-rule factor in $\partial_\xi\cW(t(\xi+\gamma))$ is canceled by the prefactor $t^{-1}$ in \eqref{eq:residual-y-derivative}:
\[
 t^{-1}\partial_\xi\cW\bigl(t(\xi+\gamma)\bigr)
 =\cW'\bigl(t(\xi+\gamma)\bigr).
\]
Thus no positive power of $t$ is produced. The factor $\xi B_A$ is
controlled by the weighted profile bounds. Since $\abs{U_A}=\abs A$, the
localized terms satisfy
\[
 \norm{\xi\cW\bigl(t(\xi+\gamma)\bigr)\abs{U_A}^4U_A}_2
 \leq\norm{\cW}_\infty\norm A_\infty^4\norm{\xi A}_2, \quad\text{ and}
\]

\[
 \norm{\cW'\bigl(t(\xi+\gamma)\bigr)\abs{U_A}^4U_A}_2
 \leq\norm{\cW'}_\infty\norm A_\infty^4\norm A_2.
\]
The full differentiated localized term obeys
\begin{align*}
 t^{-1}\norm{\partial_\xi\left[
 \cW\bigl(t(\xi+\gamma)\bigr)\abs{U_A}^4U_A\right]}_2& \leq \norm{\cW'}_\infty\norm A_\infty^4\norm A_2
 +Ct^{-1}\norm{\cW}_\infty\norm A_\infty^4
 \norm{\partial_\xi U_A}_2\\
 &
 \leq C_{A,\beta,\cW}\ell_\beta(t).
\end{align*}
The remaining derivatives of $U_A$ are bounded by
\eqref{eq:UA-weighted-bounds}. Consequently,
\[
 \norm{\partial_y\cR_A(t)}_2
 \leq C_{\mathrm{res}} t^{-2}\ell_\beta(t)^2.
\]
Indeed, the only third profile derivative occurs as
$t^{-1}\partial_\xi^3U_A$. Its nominal bound
$t^{-1}\ell_\beta(t)^3$ is $O_\beta(\ell_\beta(t)^2)$.
Together with \eqref{eq:residual-L2-detailed} this proves \eqref{eq:residual-H1}.

Finally,
\[
 \partial_yu_A(t,y)
 =t^{-1/2}e^{iy^2/(4t)}
 \left[\frac{i\xi}{2}U_A+t^{-1}\partial_\xi U_A\right]_{\xi=y/t}.
\]
Both $U_A$ and $\xi U_A$ are uniformly bounded, while the second term has an additional factor $t^{-1}$. This proves \eqref{eq:uA-pointwise}.
\end{proof}

\subsection{Final-state contraction}

Define
\[
 \cN_3(z)=\abs{z}^2z,
 \qquad
 \cN_5(z)=\abs{z}^4z.
\]
Set $u=u_A+w$. Since $\cR_A$ is the residual of $u_A$, the correction satisfies
\begin{equation*}
 \bigl(i\partial_t+\partial_y^2\bigr)w
 =\beta\bigl[\cN_3(u_A+w)-\cN_3(u_A)\bigr]
 -\cW(y+\gamma t)\bigl[\cN_5(u_A+w)-\cN_5(u_A)\bigr]
 -\cR_A.
\end{equation*}
For the equation
\[
 \bigl(i\partial_t+\partial_y^2\bigr)w=F
\]
with vanishing final data, the Duhamel formula is
\begin{equation*}
 w(t)=i\int_t^\infty e^{i(t-s)\partial_y^2}F(s)\dd s.
\end{equation*}

The following estimates retain the decay contributed by each background
factor.

\begin{lemma}\label{lem:nonlinear-H1}
Let $A\in\fkX$, set $a=\mathfrak a_\beta(A)$, and define
$\cW_t(y)=\cW(y+\gamma t)$. There is a constant $C_{\cW}>0$, depending
only on $\norm{\cW}_{W^{1,\infty}}$, such that for every
$t\geq2$ and $w\in H^1$,
\begin{align}
 \norm{\cN_3(u_A+w)-\cN_3(u_A)}_{H^1}
 &\leq Ca^2t^{-1}\norm{w}_{H^1}
 +Cat^{-1/2}\norm{w}_{H^1}^2
 +C\norm{w}_{H^1}^3,
 \label{eq:cubic-H1}\\
 \norm{\cW_t\bigl[\cN_5(u_A+w)-\cN_5(u_A)\bigr]}_{H^1}
 &\leq C_{\cW}a^4t^{-2}\norm{w}_{H^1}
 +C_{\cW}a^3t^{-3/2}\norm{w}_{H^1}^2
 \notag\\
 &\quad+C_{\cW}a^2t^{-1}\norm{w}_{H^1}^3
 +C_{\cW}at^{-1/2}\norm{w}_{H^1}^4
 +C_{\cW}\norm{w}_{H^1}^5.
 \label{eq:quintic-H1}
\end{align}
For fixed $t\geq2$ and
$w_1,w_2\in H^1$, set
\[
 \mu=\norm{w_1}_{H^1}+\norm{w_2}_{H^1}.
\]
Then
\begin{align}
 &\norm{\beta\left[\cN_3(u_A+w_1)-\cN_3(u_A+w_2)\right]
 -\cW_t\left[\cN_5(u_A+w_1)-\cN_5(u_A+w_2)\right]}_{H^1}
 \notag\\
 &\qquad\leq \Lambda(t,\mu)\norm{w_1-w_2}_{H^1},
 \label{eq:nonlinear-difference}
\end{align}
where
\begin{align}
 \Lambda(t,\mu)
 &\leq C\abs\beta\left[a^2t^{-1}+at^{-1/2}\mu+\mu^2\right]
 \notag\\
 &\quad+C_{\cW}\left[a^4t^{-2}+a^3t^{-3/2}\mu
 +a^2t^{-1}\mu^2+at^{-1/2}\mu^3+\mu^4\right].
 \label{eq:Lambda-bound}
\end{align}
\end{lemma}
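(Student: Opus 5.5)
The proof reduces to expanding the polynomial differences and estimating each monomial in $H^1$, keeping every factor of $u_A$ in $L^\infty$. We never use the algebra property of $H^1$ on those factors. First expand
\[
 \cN_3(u_A+w)-\cN_3(u_A)=\sum_{k=1}^{3}P_k(u_A,\overline{u_A};w,\overline w),
\]
where $P_k$ is a sum of monomials with exactly $k$ factors from $\{w,\overline w\}$ and $3-k$ factors from $\{u_A,\overline{u_A}\}$. Similarly, $\cN_5(u_A+w)-\cN_5(u_A)=\sum_{k=1}^5Q_k$ with $5-k$ background factors.

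For a monomial $m=\prod_{j}f_j$, the $L^2$ norm is bounded by the product of the $L^\infty$ norms of all factors but one, times the $L^2$ norm of a $w$-factor. The derivative $\partial_y m$ is a sum of terms in which one factor is differentiated. If a $w$-factor is differentiated, we place it in $L^2$ and all others in $L^\infty$. If a $u_A$-factor is differentiated, we place $\partial_yu_A$ in $L^\infty$ and one $w$-factor in $L^2$. In every case, each background factor or its derivative contributes $\norm{u_A}_\infty+\norm{\partial_yu_A}_\infty\le at^{-1/2}$ by \eqref{eq:uA-pointwise} of \autoref{lem:residual}. Each remaining $w$-factor contributes $\norm{w}_\infty\lesssim\norm w_{H^1}$ by the one-dimensional Sobolev embedding. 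A monomial with $k$ copies of $w$ is therefore bounded by $C(at^{-1/2})^{3-k}\norm w_{H^1}^k$, which gives \eqref{eq:cubic-H1}.

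For the quintic term we multiply by $\cW_t$. Then $\norm{\cW_t g}_{H^1}\le\norm{\cW}_{W^{1,\infty}}\norm g_{H^1}$, since $\partial_y\cW_t=\cW'(y+\gamma t)$ is bounded uniformly in $t$. The same monomial count yields \eqref{eq:quintic-H1}, with the powers $(at^{-1/2})^{5-k}$ producing $a^4t^{-2},\dots,a^0$. The essential point is that the translation in $\cW_t$ costs nothing in $W^{1,\infty}$. In physical variables no factor of $t$ appears, in contrast with the self-similar variables.

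For the difference estimate \eqref{eq:nonlinear-difference}, write
\[
 \cN_j(u_A+w_1)-\cN_j(u_A+w_2)
\]
as a sum of monomials in $u_A,w_1,w_2$ and their conjugates. Each monomial contains exactly one factor of $w_1-w_2$ or $\overline{w_1-w_2}$, for instance via $z_1\overline{z_1}z_1-z_2\overline{z_2}z_2$ telescoped factor by factor. Each remaining factor is either $u_A$ or one of $w_1,w_2$. Applying the same Leibniz and H\"older distribution, with $w_1-w_2$ placed in $H^1$-controlled norms, bounds every monomial by $(at^{-1/2})^{r}\mu^{s}\norm{w_1-w_2}_{H^1}$, where $r+s=2$ for the cubic term and $r+s=4$ for the quintic term. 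Summing with the prefactors $\abs\beta$ and $\norm{\cW}_{W^{1,\infty}}$ gives \eqref{eq:Lambda-bound}.

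The only delicate bookkeeping is in the derivative terms. When $\partial_y$ falls on $w_1-w_2$ or on $w$, the differentiated factor must be in $L^2$ and all others in $L^\infty$. We therefore never need $\partial_yu_A\in L^2$, which would only grow like $\ell_\beta(t)$ rather than decay. This is why the statement is phrased through $\mathfrak a_\beta(A)$ alone. I expect no real obstacle beyond checking that every term fits this pattern.
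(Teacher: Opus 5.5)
Your proposal is correct and follows the paper's proof essentially verbatim. Both arguments use the same monomial expansion, with every background factor (or its derivative) placed in $L^\infty$ via \eqref{eq:uA-pointwise}, exactly one correction factor in $L^2$, and the same translation-invariant $W^{1,\infty}$ multiplier bound for $\cW_t$. The only cosmetic difference is in the Lipschitz estimate: you telescope factor by factor, whereas the paper integrates the real Fr\'echet derivative $D\cN_j(u_A+w_\theta)[w_1-w_2]$ over $\theta\in[0,1]$. Both routes produce monomials with exactly one difference factor and give the same bound.
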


\begin{proof}
Fix $t\geq2$ and abbreviate
\[
 u_{\mathrm{bg}}=u_A(t),\qquad \rho(t)=at^{-1/2}.
\]
By \eqref{eq:a-beta-A},
\begin{equation}\label{eq:background-W1inf-proof}
 \norm{u_{\mathrm{bg}}}_{W^{1,\infty}}
 =\norm{u_{\mathrm{bg}}}_{L^\infty}
  +\norm{\partial_yu_{\mathrm{bg}}}_{L^\infty}
 \leq \rho(t).
\end{equation}
We repeatedly use the one-dimensional Sobolev embedding
$H^1(\R)\hookrightarrow L^\infty(\R)$ and the product rule.

We first record the resulting mixed product estimate. Suppose that
$\Pi(u_{\mathrm{bg}},\overline {u_{\mathrm{bg}}},w,\overline w)$ is a
monomial containing $n_{\mathrm{bg}}$ factors chosen from
$u_{\mathrm{bg}},\overline {u_{\mathrm{bg}}}$ and
$n_{\mathrm{cor}}\geq1$ factors chosen from
$w,\overline w$. Then
\[
 \norm{\Pi(u_{\mathrm{bg}},\overline {u_{\mathrm{bg}}},w,\overline w)}_{L^2}
 \leq
 \norm{u_{\mathrm{bg}}}_{L^\infty}^{n_{\mathrm{bg}}}
 \norm{w}_{L^\infty}^{n_{\mathrm{cor}}-1}
 \norm{w}_{L^2}.
\]
When one derivative is applied to $\Pi$, it either falls on a background
factor or on a correction factor. The second term below is omitted when
$n_{\mathrm{bg}}=0$:
\[
\begin{aligned}
 \norm{\partial_y\Pi(u_{\mathrm{bg}},\overline {u_{\mathrm{bg}}},w,\overline w)}_{L^2}
 &\leq C_{n_{\mathrm{bg}},n_{\mathrm{cor}}}
 \norm{u_{\mathrm{bg}}}_{L^\infty}^{n_{\mathrm{bg}}}
 \norm{w}_{L^\infty}^{n_{\mathrm{cor}}-1}
 \norm{\partial_yw}_{L^2}
 \\
 &\quad
 +C_{n_{\mathrm{bg}},n_{\mathrm{cor}}}
 \norm{u_{\mathrm{bg}}}_{L^\infty}^{n_{\mathrm{bg}}-1}
 \norm{\partial_yu_{\mathrm{bg}}}_{L^\infty}
 \norm{w}_{L^\infty}^{n_{\mathrm{cor}}-1}
 \norm{w}_{L^2}.
\end{aligned}
\]
Using \eqref{eq:background-W1inf-proof} and
$\norm{w}_{L^\infty}\leq C\norm{w}_{H^1}$, we obtain
\begin{equation}\label{eq:mixed-monomial-proof}
 \norm{\Pi(u_{\mathrm{bg}},\overline {u_{\mathrm{bg}}},w,\overline w)}_{H^1}
 \leq C_{n_{\mathrm{bg}},n_{\mathrm{cor}}}
 \rho(t)^{n_{\mathrm{bg}}}\norm{w}_{H^1}^{n_{\mathrm{cor}}}.
\end{equation}
Complex conjugation causes no change in this estimate.

For the cubic map $\cN_3(z)=\abs{z}^2z=z^2\overline{z}$, the exact
expansion is
\[
\begin{aligned}
 \cN_3(u_{\mathrm{bg}}+w)-\cN_3(u_{\mathrm{bg}})
 =\abs{u_{\mathrm{bg}}}^2w
   +2\Re(\overline {u_{\mathrm{bg}}}w)u_{\mathrm{bg}}
 +2\Re(\overline {u_{\mathrm{bg}}}w)w
   +\abs{w}^2u_{\mathrm{bg}}+\abs{w}^2w.
\end{aligned}
\]
The first line is linear in $w$ and contains two background factors.
The first two terms on the second line are quadratic in $w$ and contain
one background factor, while the final term is cubic in $w$ and contains
no background factor. Therefore \eqref{eq:mixed-monomial-proof} gives
\[
\begin{aligned}
 \norm{\cN_3(u_{\mathrm{bg}}+w)-\cN_3(u_{\mathrm{bg}})}_{H^1}
 &\leq
 C\left(
 \rho(t)^2\norm{w}_{H^1}
 +\rho(t)\norm{w}_{H^1}^2
 +\norm{w}_{H^1}^3
 \right)
 \\
 &=
 C\left(
 a^2t^{-1}\norm{w}_{H^1}
 +at^{-1/2}\norm{w}_{H^1}^2
 +\norm{w}_{H^1}^3
 \right).
\end{aligned}
\]
This proves \eqref{eq:cubic-H1}.

For the quintic map
$\cN_5(z)=\abs{z}^4z=z^3\overline{z}^{\,2}$, the binomial formula yields
\begin{equation}\label{eq:quintic-binomial-proof}
 \cN_5(u_{\mathrm{bg}}+w)-\cN_5(u_{\mathrm{bg}})
 =
 \sum_{\substack{0\leq j\leq3,\;0\leq k\leq2\\j+k\geq1}}
 \binom{3}{j}\binom{2}{k}
 u_{\mathrm{bg}}^{3-j}\overline {u_{\mathrm{bg}}}^{\,2-k}
 w^j\overline w^{\,k}.
\end{equation}
A summand with $j+k=n$ contains precisely $n$ correction factors and
$5-n$ background factors. Applying \eqref{eq:mixed-monomial-proof} to
\eqref{eq:quintic-binomial-proof}, and then grouping the terms according
to $n=1,\ldots,5$, gives
\[
\begin{aligned}
 \norm{\cN_5(u_{\mathrm{bg}}+w)-\cN_5(u_{\mathrm{bg}})}_{H^1}
 \leq C\bigl(
 \rho(t)^4\norm{w}_{H^1}
 +\rho(t)^3\norm{w}_{H^1}^2
 +\rho(t)^2\norm{w}_{H^1}^3
 +\rho(t)\norm{w}_{H^1}^4
 +\norm{w}_{H^1}^5
 \bigr).
\end{aligned}
\]
Since $\cW_t(y)=\cW(y+\gamma t)$, translation invariance gives
\[
 \norm{\cW_t}_{W^{1,\infty}}
 =\norm{\cW}_{W^{1,\infty}}.
\]
Moreover, for every $G\in H^1(\R)$,
\[
\begin{aligned}
 \norm{\cW_tG}_{H^1}
 \leq
 \norm{\cW_tG}_{L^2}
 +\norm{(\partial_y\cW_t)G}_{L^2}
 +\norm{\cW_t\partial_yG}_{L^2}
 \leq
 C\norm{\cW}_{W^{1,\infty}}\norm{G}_{H^1}.
\end{aligned}
\]
Consequently, substituting $\rho(t)=at^{-1/2}$ gives
\[
\begin{aligned}
 \norm{\cW_t\bigl[\cN_5(u_{\mathrm{bg}}+w)
 -\cN_5(u_{\mathrm{bg}})\bigr]}_{H^1}
 &\leq C_{\cW}\bigl(
 a^4t^{-2}\norm{w}_{H^1}
 +a^3t^{-3/2}\norm{w}_{H^1}^2
 +a^2t^{-1}\norm{w}_{H^1}^3
 \\
 &\hspace{48mm}
 +at^{-1/2}\norm{w}_{H^1}^4
 +\norm{w}_{H^1}^5
 \bigr),
\end{aligned}
\]
which proves \eqref{eq:quintic-H1}.

It remains to prove the Lipschitz estimate. Set
\[
 h=w_1-w_2,\qquad
 w_\theta=(1-\theta)w_2+\theta w_1,\qquad
 B_\theta=u_{\mathrm{bg}}+w_\theta,\qquad 0\leq\theta\leq1.
\]
Then
\[
 \norm{w_\theta}_{H^1}
 \leq \norm{w_1}_{H^1}+\norm{w_2}_{H^1}=\mu.
\]
Regarding $\cN_3$ and $\cN_5$ as smooth maps from $\C\simeq\R^2$ to
$\C$, their real Fr\'echet derivatives are
\[
 D\cN_3(z)[h]=2\abs{z}^2h+z^2\overline h, \quad \text{ and}
\]
\[
 D\cN_5(z)[h]=3\abs{z}^4h+2z^3\overline z\,\overline h.
\]
The fundamental theorem of calculus therefore gives
\begin{equation}\label{eq:FTC-nonlinearity-proof}
 \cN_j(u_{\mathrm{bg}}+w_1)-\cN_j(u_{\mathrm{bg}}+w_2)
 =
 \int_0^1D\cN_j(B_\theta)[h]\dd\theta,
 \qquad j\in\{3,5\}.
\end{equation}

Every monomial in $D\cN_3(u_{\mathrm{bg}}+w_\theta)[h]$ contains one factor
chosen from $h,\overline h$ and two further factors chosen from
$u_{\mathrm{bg}},\overline {u_{\mathrm{bg}}},w_\theta,\overline w_\theta$.
The same product-rule
argument used in \eqref{eq:mixed-monomial-proof} shows that
\[
 \norm{D\cN_3(u_{\mathrm{bg}}+w_\theta)[h]}_{H^1}
 \leq
 C\left(\rho(t)^2+\rho(t)\mu+\mu^2\right)\norm{h}_{H^1}.
\]
Likewise, every monomial in $D\cN_5(u_{\mathrm{bg}}+w_\theta)[h]$
contains one factor chosen from $h,\overline h$ and
four further factors. Hence
\[
 \norm{D\cN_5(u_{\mathrm{bg}}+w_\theta)[h]}_{H^1}
 \leq
 C\left(
 \rho(t)^4+\rho(t)^3\mu+\rho(t)^2\mu^2+\rho(t)\mu^3+\mu^4
 \right)\norm{h}_{H^1}.
\]
Integrating these estimates in \eqref{eq:FTC-nonlinearity-proof},
multiplying the cubic estimate by $\abs{\beta}$, and using the
$H^1$ multiplier estimate for $\cW_t$, we obtain
\[
\begin{aligned}
 &\norm{
 \beta\bigl[\cN_3(u_{\mathrm{bg}}+w_1)
 -\cN_3(u_{\mathrm{bg}}+w_2)\bigr]
 -\cW_t\bigl[\cN_5(u_{\mathrm{bg}}+w_1)
 -\cN_5(u_{\mathrm{bg}}+w_2)\bigr]
 }_{H^1}
 \\
 &\quad\leq
 \Bigl\{
 C\abs{\beta}\left(\rho(t)^2+\rho(t)\mu+\mu^2\right)
 +C_{\cW}\left(
 \rho(t)^4+\rho(t)^3\mu+\rho(t)^2\mu^2+\rho(t)\mu^3+\mu^4
 \right)
 \Bigr\}\norm{w_1-w_2}_{H^1}.
\end{aligned}
\]
Finally, substituting $\rho(t)=at^{-1/2}$ gives
\[
\begin{aligned}
 \Lambda(t,\mu)
 \leq
 C\abs{\beta}\left[
 a^2t^{-1}+at^{-1/2}\mu+\mu^2
 \right]
 +C_{\cW}\left[
 a^4t^{-2}+a^3t^{-3/2}\mu+a^2t^{-1}\mu^2
 +at^{-1/2}\mu^3+\mu^4
 \right],
\end{aligned}
\]
which proves \eqref{eq:nonlinear-difference} and
\eqref{eq:Lambda-bound}.
\end{proof}

The $t^{-1}$ cubic linearization contributes
$C\abs\beta\mathfrak a_\beta(A)^2$ to the contraction constant. The next
theorem assumes that this quantity is small.

\begin{theorem}\label{thm:wave-operator}
Let $\beta,\gamma\in\R$, let $\cW$ be real-valued and satisfy
\eqref{eq:W-assumption}, and let $A\in\fkX$. There exist a universal constant
$\eta_0>0$ and a number $m_0=m_0(\norm{\cW}_\infty)>0$ with the following
property. If
\begin{equation}\label{eq:effective-smallness}
 \abs\beta\,\mathfrak a_\beta(A)^2\leq\eta_0,
 \qquad
 \norm A_2\leq m_0,
\end{equation}
then there is a global solution
\[
 u\in C([1,\infty);H^1(\R))
 \cap C^1([1,\infty);H^{-1}(\R))
\]
of \eqref{eq:comoving} satisfying
\begin{equation}\label{eq:wave-operator-est}
 \norm{u(t)-u_A(t)}_{H^1_y}
 +\norm{u(t)-u_A(t)}_{L^\infty_y}
 \leq C_{A,\beta,\cW}\frac{\bigl(1+\abs\beta\log t\bigr)^2}{t}
\end{equation}
for all sufficiently large $t$.
The solution is unique among all mild solutions
$\widetilde u\in C([1,\infty);H^1(\R))$ for which, on some asymptotic tail,
\begin{equation}\label{eq:wave-operator-uniqueness-class}
 \norm{\widetilde u(t)-u_A(t)}_{H^1_y}=O(t^{-3/4}).
\end{equation}
Consequently, $v(t,x)=u(t,x-\gamma t)$ solves \eqref{eq:main} and satisfies
\begin{equation*}
 \norm{v(t)-v_A(t)}_{H^1_x}
 +\norm{v(t)-v_A(t)}_{L^\infty_x}
 \leq C_{A,\beta,\cW}\frac{\bigl(1+\abs\beta\log t\bigr)^2}{t}
\end{equation*}
for all sufficiently large $t$, where $v_A$ is defined in
\eqref{eq:vA-intro}.
The assignment
\begin{equation}\label{eq:wave-operator-map}
 \Omega_+(A):=v(1)\in H^1(\R)
\end{equation}
is therefore a modified wave operator on the profile class determined by
\eqref{eq:effective-smallness}.

For each fixed $\beta$ and $\cW$, condition \eqref{eq:effective-smallness} is satisfied when $\norm A_{\fkX}$ is sufficiently small.
\end{theorem}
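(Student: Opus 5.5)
We will construct the correction $w=u-u_A$ on a tail $[T_0,\infty)$ as a fixed point of the vanishing-final-data Duhamel map
\[
 \Phi(w)(t)=i\int_t^\infty e^{i(t-s)\partial_y^2}\Bigl(\beta\bigl[\cN_3(u_A+w)-\cN_3(u_A)\bigr]-\cW_s\bigl[\cN_5(u_A+w)-\cN_5(u_A)\bigr]-\cR_A(s)\Bigr)\dd s .
\]
The working space is
\[
 \mathcal Y_{T_0}=\Bigl\{w\in C([T_0,\infty);H^1):\ \norm w_{\mathcal Y}:=\sup_{t\geq T_0}\frac{t}{\ell_\beta(t)^2}\norm{w(t)}_{H^1}\leq K\Bigr\},
\]
with $K=2C_{\mathrm{res}}$.

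\textbf{Source term.} Unitarity of $e^{it\partial_y^2}$ on $H^1$ and \autoref{lem:residual} give
\[
 \int_t^\infty C_{\mathrm{res}}\,\ell_\beta(s)^2s^{-2}\dd s\leq C_{\mathrm{res}}(1+o(1))\,\ell_\beta(t)^2t^{-1},
\]
since $\ell_\beta$ is slowly varying.

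\textbf{The critical linear term.} By \autoref{lem:nonlinear-H1}, the term $C\abs\beta a^2t^{-1}\norm w_{H^1}$ contributes
\[
 \int_t^\infty C\abs\beta a^2 s^{-1}\,\frac{K\ell_\beta(s)^2}{s}\dd s\leq C\abs\beta a^2(1+o(1))\,K\frac{\ell_\beta(t)^2}{t}.
\]
This is where \eqref{eq:effective-smallness} enters: with $\eta_0$ universal such that $C\eta_0\leq 1/4$, this term costs at most $K/4$ in the weight. This step is the main obstacle. The $s^{-1}$ weight is not integrable, and only the extra $s^{-1}$ decay of $w$ saves it, so the constant must be tracked exactly, with no $\log$ loss. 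I will verify $\int_t^\infty \ell_\beta(s)^2s^{-2}\dd s\leq(1+\delta)\ell_\beta(t)^2t^{-1}$ for $t\geq T_0(\beta,\delta)$.

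\textbf{Remaining terms and contraction.} All other terms in \eqref{eq:cubic-H1}--\eqref{eq:quintic-H1} carry additional factors $t^{-1/2}$ or powers of $\norm w_{H^1}\lesssim K\ell_\beta^2t^{-1}$. They are therefore bounded by $o(1)K\ell_\beta^2t^{-1}$ once $T_0$ is large, depending on $A,\beta,\cW$. The same bookkeeping with $\Lambda(t,\mu)$ from \eqref{eq:Lambda-bound} makes $\Phi$ a contraction in the weighted norm with constant $\leq 1/2$. This yields $u=u_A+w$ on $[T_0,\infty)$ satisfying \eqref{eq:wave-operator-est}; the $L^\infty$ part follows from $H^1\hookrightarrow L^\infty$.

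\textbf{Backward continuation to $t=1$.} Mass conservation \eqref{eq:local-mass-conservation} gives $\norm{u(t)}_2=\lim_{s\to\infty}\norm{u_A(s)}_2=\norm A_2\leq m_0$, since $\norm{u_A(s)}_2=\norm{U_A}_{L^2_\xi}=\norm A_2$. Set $Q(t,x)=e^{i\gamma x/2-i\gamma^2t/4}v(t,x)$, where $v(t,x)=u(t,x-\gamma t)$. This is the gauge transformation from the introduction; it removes $\gamma i\partial_x$ and turns \eqref{eq:main} into \eqref{eq:Q-stationary}. By \autoref{prop:local-theory}(iii), $Q$ is global backward, and $\norm{\partial_xQ}_2$ bounds $\norm{u}_{H^1}$ on $[1,T_0]$. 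The blow-up alternative then extends $u$ to $[1,\infty)$.

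\textbf{Uniqueness.} Let $\widetilde u$ be another mild solution with \eqref{eq:wave-operator-uniqueness-class}. Then $d=\widetilde u-u$ satisfies the vanishing-final-data Duhamel identity; the boundary term vanishes because $\norm{d(t)}_{H^1}\to0$. Estimate $\sup_{s\geq t}s^{1/2}\norm{d(s)}_{H^1}$ using \eqref{eq:nonlinear-difference} with $\mu\lesssim t^{-3/4}$:
\[
 \Lambda\lesssim\abs\beta a^2t^{-1}+t^{-5/4}.
\]
The $t^{-1}$ term applied to $s^{-1/2}$ decay integrates to $2\abs\beta a^2 C\,t^{-1/2}$, which is again small by $\eta_0$. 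Hence $d\equiv0$ on a tail. Backward uniqueness from \autoref{prop:local-theory}(i) then gives $d\equiv0$ on $[1,\infty)$.

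\textbf{Statements for $v$.} The bounds for $v$ follow by translation invariance of the norms. The final claim follows from \eqref{eq:a-beta-A-bound}, since
\[
 \abs\beta\,\mathfrak a_\beta(A)^2\leq \abs\beta C_\beta^2\bigl(\norm A_{\fkX}+\norm A_{\fkX}^3\bigr)^2\quad\text{and}\quad\norm A_2\leq\norm A_{\fkX},
\]
so both conditions in \eqref{eq:effective-smallness} hold once $\norm A_{\fkX}$ is small.
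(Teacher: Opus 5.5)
Your proposal is correct and follows essentially the paper's argument. You use the same final-state contraction in the norm $\sup_t t\,\ell_\beta(t)^{-2}\norm{w(t)}_{H^1}$, in which the cubic linearization $C\abs\beta\,\mathfrak a_\beta(A)^2$ is the only coefficient that is not small. Global continuation goes, as in the paper, through the gauge-transformed stationary equation and small-mass coercivity, and uniqueness on a tail comes from a second weighted contraction. The differences are cosmetic: your ball radius $2C_{\mathrm{res}}$, justified by the sharp $(1+o(1))$ tail asymptotics, versus the paper's $4C_{\mathrm{res}}$, and your uniqueness weight $s^{1/2}$ (coefficient $2C\abs\beta a^2$) versus the weight $s^{3/4}$ in \autoref{lem:tail-uniqueness} (coefficient $\frac43 C\abs\beta a^2$). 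Both close under the same universal choice of $\eta_0$.
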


\begin{proof}
Fix $T_0\geq2$, to be chosen below, and define
\[
 \mathcal X_{T_0}=\left\{w\in C\bigl([T_0,\infty);H^1(\R)\bigr):
 \norm w_{\mathcal X_{T_0}}
 =\sup_{t\geq T_0}\frac{t}{\ell_\beta(t)^2}
 \norm{w(t)}_{H^1}<\infty\right\}.
\]
For $w\in\mathcal X_{T_0}$, define
\[
 \cM_A(w)(t)
 =\beta\bigl[\cN_3(u_A(t)+w(t))-\cN_3(u_A(t))\bigr]
 -\cW_t\bigl[\cN_5(u_A(t)+w(t))-\cN_5(u_A(t))\bigr].
\]
The fixed-point map is
\begin{equation}\label{eq:fixed-point-map}
 \cT w(t)=i\int_t^\infty e^{i(t-s)\partial_y^2}
 \left[\cM_A(w)(s)-\cR_A(s)\right]\dd s.
\end{equation}
The free group is unitary on $H^1$. By \autoref{lem:residual},
\begin{equation}\label{eq:source-X}
 \sup_{t\geq T_0}\frac{t}{\ell_\beta(t)^2}
 \int_t^\infty\norm{\cR_A(s)}_{H^1}\dd s
 \leq C_{\mathrm{res}}.
\end{equation}

We record the elementary tail estimates used repeatedly below. After increasing $T_0$, one has
\begin{align}
 \int_t^\infty\frac{\ell_\beta(s)^2}{s^2}\dd s
 &\leq C\frac{\ell_\beta(t)^2}{t},
 \label{eq:tail-L2-s2}\\
 \int_t^\infty\frac{\ell_\beta(s)^2}{s^3}\dd s
 &\leq C\frac{\ell_\beta(t)^2}{t^2},
\quad
 \int_t^\infty\frac{\ell_\beta(s)^4}{s^2}\dd s
 \leq C\frac{\ell_\beta(t)^4}{t}.
 \notag
\end{align}
For example, integration by parts in the first integral gives the boundary
term $\ell_\beta(t)^2/t$ and a term bounded by
$C\ell_\beta(t)/t$. The remaining
estimates follow similarly. In particular,
\begin{equation*}
 \sup_{t\geq T_0}\frac{t}{\ell_\beta(t)^2}
 \int_t^\infty\frac{\ell_\beta(s)^2}{s^3}\dd s\leq \frac{C}{T_0},
\end{equation*}
whereas the corresponding normalized cubic integral in \eqref{eq:tail-L2-s2} is only bounded, not small. This is the quantitative distinction between the long-range and short-range terms.

For $R_{\mathrm{tail}}>0$, let
$\mathbb B_{\mathrm{tail}}
=\{w\in\mathcal X_{T_0}:\norm w_{\mathcal X_{T_0}}\leq R_{\mathrm{tail}}\}$.
For $w_1,w_2\in \mathbb B_{\mathrm{tail}}$, set
$\mu(s)=\norm{w_1(s)}_{H^1}+\norm{w_2(s)}_{H^1}$. Then
\begin{equation}\label{eq:mu-ball}
 \mu(s)\leq 2R_{\mathrm{tail}}\frac{\ell_\beta(s)^2}{s}.
\end{equation}
Insert \eqref{eq:mu-ball} into \eqref{eq:Lambda-bound} and use
\[
 \norm{w_1(s)-w_2(s)}_{H^1}
 \leq \frac{\ell_\beta(s)^2}{s}
 \norm{w_1-w_2}_{\mathcal X_{T_0}}.
\]
The only long-range coefficient is the cubic linearization
$C\abs\beta\mathfrak a_\beta(A)^2s^{-1}$. It contributes
\[
 C\abs\beta\mathfrak a_\beta(A)^2
 \sup_{t\geq T_0}\frac{t}{\ell_\beta(t)^2}
 \int_t^\infty\frac{\ell_\beta(s)^2}{s^2}\dd s
 \leq C\abs\beta\mathfrak a_\beta(A)^2.
\]
Every other contribution gains an additional negative power of $s$. For example, the quadratic cubic term is bounded by
\begin{align*}
 &C_{A,R_{\mathrm{tail}}}\sup_{t\geq T_0}\frac{t}{\ell_\beta(t)^2}
 \int_t^\infty \frac{\ell_\beta(s)^4}{s^{5/2}}\dd s\leq C_{A,R_{\mathrm{tail}}}\sup_{t\geq T_0}
 \frac{\ell_\beta(t)^2}{t^{1/2}},
\end{align*}
and the pure cubic correction term is bounded by
\[
 C_{R_{\mathrm{tail}}}\sup_{t\geq T_0}\frac{t}{\ell_\beta(t)^2}
 \int_t^\infty\frac{\ell_\beta(s)^6}{s^3}\dd s
 \leq C_{R_{\mathrm{tail}}}\sup_{t\geq T_0}\frac{\ell_\beta(t)^4}{t}.
\]
The quintic linearization gives the bound
$C_{\cW}\mathfrak a_\beta(A)^4/T_0$, and its
higher-order terms are smaller still. Since
$\ell_\beta(t)^m t^{-\theta}\to0$ for every fixed $m$ and $\theta>0$,
the smallness condition \eqref{eq:effective-smallness} makes the long-range
cubic contribution smaller than $1/4$, and then $T_0$ can be chosen so
that the sum of all short-range contributions is below $1/4$. Therefore
\begin{equation*}
 \norm{\cT w_1-\cT w_2}_{\mathcal X_{T_0}}
 \leq\frac12\norm{w_1-w_2}_{\mathcal X_{T_0}},
 \qquad w_1,w_2\in \mathbb B_{\mathrm{tail}}.
\end{equation*}

To verify invariance of the ball, enlarge $C_{\mathrm{res}}$ if necessary
so that it bounds \eqref{eq:source-X}, and take
$R_{\mathrm{tail}}=4C_{\mathrm{res}}$. The linear cubic contribution is controlled by the
same effective smallness condition, while all terms at least quadratic in
$w$ contain one of the additional powers of $s^{-1/2}$ displayed in
\eqref{eq:cubic-H1}--\eqref{eq:quintic-H1}. Enlarging $T_0$ after fixing
$R_{\mathrm{tail}}$, we obtain
\[
 \norm{\cT w}_{\mathcal X_{T_0}}
 \leq C_{\mathrm{res}}+\frac12\norm w_{\mathcal X_{T_0}}
 \leq C_{\mathrm{res}}+2C_{\mathrm{res}}
 <4C_{\mathrm{res}}=R_{\mathrm{tail}},
 \qquad w\in \mathbb B_{\mathrm{tail}}.
\]
Banach's fixed-point theorem therefore yields a unique
$w\in \mathbb B_{\mathrm{tail}}$ solving
\eqref{eq:fixed-point-map}. The integral converges in $H^1$ for every
$t\geq T_0$. Moreover,
\[
 w\in C\bigl([T_0,\infty);H^1(\R)\bigr)
 \cap C^1\bigl([T_0,\infty);H^{-1}(\R)\bigr).
\]
The definition of $\mathcal X_{T_0}$ and the embedding
$H^1(\R)\hookrightarrow L^\infty(\R)$ give
\eqref{eq:wave-operator-est} on the tail.

It remains to continue the tail solution to the fixed time $t=1$. Set
\[
 v(t,x)=u(t,x-\gamma t),
 \qquad
 Q(t,x)=e^{i\gamma x/2-i\gamma^2t/4}v(t,x).
\]
Then $Q$ solves \eqref{eq:Q-stationary}. By
\eqref{eq:local-mass-conservation}, the $L^2$ norm of the tail solution
is constant. Since
\[
 \norm{u(t)-u_A(t)}_2\longrightarrow0,
 \qquad
 \norm{u_A(t)}_2=\norm A_2,
\]
and both changes of variables above are unitary on $L^2$, we have
\[
 \norm{Q(t)}_2=\norm A_2\leq m_0.
\]
The small-mass continuation assertion in
\autoref{prop:local-theory} therefore extends $Q$ uniquely through every
finite time, in particular backward to $t=1$. Moreover,
\[
 \norm{\partial_yu(t)}_2=\norm{\partial_xv(t)}_2
 \leq \norm{\partial_xQ(t)}_2+\frac{\abs\gamma}{2}\norm{Q(t)}_2,
\]
so the conserved-energy bound for $Q$ also rules out $H^1$ blow-up after
transforming back. Thus the inverse transformations give
the asserted global solutions $u$ and $v$ and make
\eqref{eq:wave-operator-map} well defined. The uniqueness assertion in
\eqref{eq:wave-operator-uniqueness-class} follows from
\autoref{lem:tail-uniqueness} below.
\end{proof}

We next prove uniqueness in a class larger than the contraction ball.

\begin{lemma}\label{lem:tail-uniqueness}
Assume that the universal constant $\eta_0$ in
\autoref{thm:wave-operator} has been chosen sufficiently small. Let
$\beta,\gamma\in\R$, let $\cW$ be real-valued and satisfy
\eqref{eq:W-assumption}, and let
$A\in\fkX$ satisfy
$\abs\beta\mathfrak a_\beta(A)^2\leq\eta_0$. For some $T_*\geq2$, let
$u^{(1)}$ and $u^{(2)}$
be two mild solutions in $C([T_*,\infty);H^1(\R))$ of
\eqref{eq:comoving} with the same final profile $A$, and suppose that for
some finite $C_*$,
\begin{equation}\label{eq:tail-uniqueness-class}
 \norm{u^{(j)}(t)-u_A(t)}_{H^1_y}
 \leq C_* t^{-3/4},
 \qquad t\geq T_*,\quad j=1,2.
\end{equation}
Then
\[
 u^{(1)}(t)=u^{(2)}(t),\qquad t\geq T_*.
\]
\end{lemma}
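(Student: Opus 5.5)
Set $d(t)=u^{(1)}(t)-u^{(2)}(t)$ and $w_j=u^{(j)}-u_A$. We will show that $d$ satisfies a final-state Duhamel identity with vanishing data at infinity. Then we close a weighted Gronwall-type estimate in a norm slightly weaker than $t^{-3/4}$.

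\textbf{Duhamel identity.} Since both $u^{(j)}$ are mild solutions on $[T_*,\infty)$, for $t\leq t'$ we have
\[
 d(t)=e^{i(t-t')\partial_y^2}d(t')+i\int_t^{t'}e^{i(t-s)\partial_y^2}\bigl[\mathfrak N_s(u^{(1)})-\mathfrak N_s(u^{(2)})\bigr]\dd s,
\]
with $\mathfrak N_s$ as in the proof of \autoref{prop:local-theory}. By \eqref{eq:tail-uniqueness-class}, $\norm{d(t')}_{H^1}\leq 2C_*t'^{-3/4}\to0$. Hence we may let $t'\to\infty$, provided the integrand is integrable.

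\textbf{Integrability and the difference bound.} The nonlinear difference equals the left side of \eqref{eq:nonlinear-difference} with $w_1,w_2$ in place of the correction terms. Applying \autoref{lem:nonlinear-H1} with $\mu(s)\leq 2C_*s^{-3/4}$ gives
\[
 \Lambda(s,\mu(s))\leq C\abs\beta\mathfrak a_\beta(A)^2s^{-1}+C_{A,C_*,\beta,\cW}s^{-5/4},
\]
because $at^{-1/2}\mu\lesssim s^{-5/4}$, $\mu^2\lesssim s^{-3/2}$, and all quintic terms are at least as good. The integrand is therefore $O(s^{-1}\cdot s^{-3/4})$, which is integrable. Thus
\[
 d(t)=i\int_t^\infty e^{i(t-s)\partial_y^2}\bigl[\cdots\bigr]\dd s,\qquad
 \norm{d(t)}_{H^1}\leq\int_t^\infty\Bigl(\frac{\kappa}{s}+\frac{K}{s^{5/4}}\Bigr)\norm{d(s)}_{H^1}\dd s,
\]
where $\kappa=C\abs\beta\mathfrak a_\beta(A)^2\leq C\eta_0$.

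\textbf{Closing the estimate (main step).} The difficulty is that the $\kappa/s$ coefficient is not integrable, so a naive backward Gronwall argument gives no decay gain. We handle it with a weighted norm. Let
\[
 D(T)=\sup_{t\geq T}t^{3/4}\norm{d(t)}_{H^1},
\]
which is finite by hypothesis. Inserting $\norm{d(s)}_{H^1}\leq s^{-3/4}D(T)$ for $t\geq T$ yields
\[
 \norm{d(t)}_{H^1}\leq D(T)\Bigl(\frac{4\kappa}{3}t^{-3/4}+4Kt^{-1}\Bigr).
\]
Therefore
\[
 D(T)\leq\Bigl(\tfrac43\kappa+4KT^{-1/4}\Bigr)D(T).
\]
Choose $\eta_0$ so that $\tfrac43C\eta_0\leq\tfrac14$ (this is the "sufficiently small" choice in the hypothesis), and then choose $T\geq T_*$ large so that $4KT^{-1/4}\leq\tfrac14$. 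The bracket is then at most $\tfrac12$, so $D(T)=0$ and $d\equiv0$ on $[T,\infty)$.

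\textbf{Extending to $[T_*,T]$.} On the compact interval $[T_*,T]$, both functions are $H^1$ mild solutions agreeing at time $T$. The backward uniqueness in \autoref{prop:local-theory}(i) then gives $u^{(1)}=u^{(2)}$ on $[T_*,T]$ as well.

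The one point that needs care is that the weight exponent $3/4$ makes the long-range coefficient $\int_t^\infty\kappa s^{-1-3/4}\dd s=\tfrac43\kappa t^{-3/4}$ a fixed multiple of $\kappa$. This multiple is what forces $\eta_0$ to be small rather than relying on a large time $T$.
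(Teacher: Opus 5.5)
Your proof is correct and follows the paper's argument essentially step for step. You pass to the final-state Duhamel formula using $d(t')\to0$ in $H^1$, close the estimate in the $t^{3/4}$-weighted norm (where the long-range cubic linearization contributes exactly $\tfrac43 C\abs\beta\mathfrak a_\beta(A)^2$, forcing $\eta_0$ small), absorb the $s^{-5/4}$ terms by enlarging the starting time, and propagate equality back to $T_*$ by local $H^1$ uniqueness; the only blemish is that $\int_t^\infty Ks^{-2}\dd s=Kt^{-1}$, so your $4Kt^{-1}$ is a harmless overestimate.
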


\begin{proof}
Set $d=u^{(1)}-u^{(2)}$ and $w_j=u^{(j)}-u_A$. For $T_1$ sufficiently
large, the difference nonlinearity belongs to
$L^1([T_1,\infty);H^1)$. The $H^1$ norm of its cubic linear part is
$O(s^{-1})\norm{d(s)}_{H^1}=O(s^{-7/4})$, and all remaining terms are
integrable with faster decay. Integrating from $t$ to a
finite $T_{\mathrm{fin}}$ and then sending $T_{\mathrm{fin}}\to\infty$,
using $d(T_{\mathrm{fin}})\to0$ in $H^1$, gives
\[
 d(t)=i\int_t^\infty e^{i(t-s)\partial_y^2}
 \Bigl\{\beta\bigl[\cN_3(u_A+w_1)-\cN_3(u_A+w_2)\bigr]
 -\cW_s\bigl[\cN_5(u_A+w_1)-\cN_5(u_A+w_2)\bigr]\Bigr\}\dd s,
\]
where $\cW_s(y)=\cW(y+\gamma s)$. Applying the difference estimates
\eqref{eq:nonlinear-difference}--\eqref{eq:Lambda-bound} and
\eqref{eq:tail-uniqueness-class} gives
\[
 \mu(s):=\norm{w_1(s)}_{H^1}+\norm{w_2(s)}_{H^1}
 \leq 2C_*s^{-3/4}.
\]
Define
\[
 Y_{T_1}:=\left\{h\in C\bigl([T_1,\infty);H^1(\R)\bigr):
 \sup_{t\geq T_1}t^{3/4}\norm{h(t)}_{H^1}<\infty\right\},
 \qquad
 \norm h_{Y_{T_1}}
 :=\sup_{t\geq T_1}t^{3/4}\norm{h(t)}_{H^1}.
\]
The bound \eqref{eq:tail-uniqueness-class} implies $d\in Y_{T_1}$.
The linearized cubic term contributes
\[
 C\mathfrak a_\beta(A)^2\abs\beta
 \sup_{t\geq T_1}t^{3/4}
 \int_t^\infty s^{-1}s^{-3/4}\dd s\,\norm d_{Y_{T_1}}
 =\frac{4}{3}C\mathfrak a_\beta(A)^2\abs\beta
 \norm d_{Y_{T_1}}.
\]
Choose the universal constant $\eta_0$ in \eqref{eq:effective-smallness}
small enough that this contribution is at most
$\frac14\norm d_{Y_{T_1}}$. Every remaining term in
\eqref{eq:Lambda-bound} contains either an extra factor $s^{-1}$ from the
localized quintic linearization or a positive power of $\mu(s)$. Its
contribution is $o_{T_1\to\infty}(1)\norm d_{Y_{T_1}}$. Increasing $T_1$
gives
\[
 \norm d_{Y_{T_1}}\leq\frac12\norm d_{Y_{T_1}},
\]
so $d=0$ on a common tail $[T_1,\infty)$. Local $H^1$ uniqueness then propagates the equality backward from $T_1$ to the original time $T_*$.
\end{proof}

\begin{corollary}\label{cor:wave-operator-properties}
Define the open admissible profile set
\[
\fkX_{\mathrm{adm}}
 :=\left\{A\in\fkX:
 \abs\beta\mathfrak a_\beta(A)^2<\eta_0,\ 
 \norm A_2<m_0\right\}.
\]
The modified wave operator
\[
 \Omega_+:\fkX_{\mathrm{adm}}\longrightarrow H^1(\R)
\]
is continuous, injective, and gauge equivariant:
\[
 \Omega_+(e^{i\vartheta}A)=e^{i\vartheta}\Omega_+(A),
 \qquad \vartheta\in\R.
\]
\end{corollary}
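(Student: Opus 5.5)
The plan is to prove the three properties separately. Each one reduces to the tail contraction of \autoref{thm:wave-operator}, the uniqueness statement \autoref{lem:tail-uniqueness}, and the global $H^1$ well-posedness of \autoref{prop:local-theory}.

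\textbf{Openness.} The set $\fkX_{\mathrm{adm}}$ is open because $A\mapsto\mathfrak a_\beta(A)$ is locally Lipschitz on $\fkX$ by \eqref{eq:a-beta-continuity}, and $A\mapsto\norm A_2$ is continuous.

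\textbf{Gauge equivariance.} Here I would use uniqueness. Since $\abs{e^{i\vartheta}A}=\abs A$, we have $U_{e^{i\vartheta}A}=e^{i\vartheta}U_A$ and hence $u_{e^{i\vartheta}A}=e^{i\vartheta}u_A$. Both nonlinearities are gauge invariant: $\cN_j(e^{i\vartheta}z)=e^{i\vartheta}\cN_j(z)$. Therefore $e^{i\vartheta}u$ is a mild solution of \eqref{eq:comoving}, and it satisfies \eqref{eq:wave-operator-est} with profile $e^{i\vartheta}A$. In particular it lies in the class \eqref{eq:wave-operator-uniqueness-class}. \autoref{lem:tail-uniqueness} then identifies it with the solution built from $e^{i\vartheta}A$, so evaluating at $t=1$ gives $\Omega_+(e^{i\vartheta}A)=e^{i\vartheta}\Omega_+(A)$.

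\textbf{Injectivity.} Suppose $\Omega_+(A)=\Omega_+(B)$. Then the two global solutions agree at $t=1$, and uniqueness in \autoref{prop:local-theory}(i) makes them agree for all $t\geq1$. Call this common solution $u$. By \eqref{eq:wave-operator-est}, $\norm{u_A(t)-u_B(t)}_{L^2_y}\to0$. I would then compute this norm in self-similar variables:
\[
\norm{u_A(t)-u_B(t)}_{L^2_y}=\norm{U_A(t)-U_B(t)}_{L^2_\xi}
\geq\bigl\|\abs A-\abs B\bigr\|_{L^2_\xi}.
\]
This forces $\abs A=\abs B$. The phase factors $e^{-i\beta\abs A^2\log t}$ and $e^{-i\beta\abs B^2\log t}$ then coincide, so $\norm{U_A(t)-U_B(t)}_{L^2_\xi}=\norm{A-B}_2$ for every $t$. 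Since this quantity is independent of $t$ and tends to $0$, we get $A=B$.

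\textbf{Continuity.} This is the main step. Fix $A\in\fkX_{\mathrm{adm}}$ and a bounded neighbourhood of $A$ inside $\fkX_{\mathrm{adm}}$.

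First, I would check that the constants in the proof of \autoref{thm:wave-operator} can be chosen uniformly over this neighbourhood. The residual bound \eqref{eq:residual-H1} depends on finitely many weighted norms of $A$, so $C_{\mathrm{res}}$ is locally uniform. Choosing $\eta_0$ with room to spare, the condition $\abs\beta\mathfrak a_\beta(B)^2<\eta_0$ keeps the long-range contraction constant below $1/4$, and a single $T_0$ works for all $B$ in the neighbourhood.

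Next comes the stability estimate, which is the step I expect to be the main obstacle. The fixed point $w_B$ must depend continuously on $B$ in $C([T_0,T_1];H^1)$ at one finite time $T_1\geq T_0$. The natural approach is to estimate the difference of the fixed-point maps, $\cT_A w-\cT_B w$. This requires:
\begin{itemize}
\item difference bounds for the residuals, $\int_t^\infty\norm{\cR_A(s)-\cR_B(s)}_{H^1}\dd s$;
\item difference bounds for the backgrounds, $u_A-u_B$ in $W^{1,\infty}$ and in the multilinear terms.
\end{itemize}
Differences of the phases $e^{-i\beta\abs A^2\log t}$ produce extra powers of $\log t$. The residual difference carries one more derivative than the residual itself, which $\la\xi\ra A\in H^3$ may not control at the rate $t^{-2}$. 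To avoid these losses, I would first try to prove continuity in the weaker norm $t^{1-\delta}\ell_\beta(t)^{-3}$. Lipschitz-type bounds in $\norm{A-B}_{\fkX}$ hold with one extra logarithm, and the contraction still closes in this weaker norm. If that fails, I would fall back on a compactness argument. The family $\{w_B\}$ is bounded in the tail ball. Any limit of $w_{B_n}$ as $B_n\to A$ solves the fixed-point equation for $A$ in a larger class, and \autoref{lem:tail-uniqueness} identifies it with $w_A$, since $\ell_\beta^2/t=O(t^{-3/4})$.

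Finally, I would propagate continuity from $T_1$ back to $t=1$. Continuity of $u_B(T_1)$ in $H^1$ is immediate from the formula for $u_B$. The backward flow is global, by the small-mass bound of \autoref{prop:local-theory}(iii) with uniform mass $\norm B_2<m_0$, and it depends continuously on data in $H^1$ on the compact interval $[1,T_1]$. Composing these maps, and undoing the translation $x=y+\gamma t$, gives continuity of $\Omega_+$.
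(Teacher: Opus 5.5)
Your proposal is correct and follows the paper's proof essentially step for step. It has the same four parts: openness from \eqref{eq:a-beta-continuity}; injectivity from the reverse triangle inequality applied to $\norm{U_A(t)-U_B(t)}_2$; gauge equivariance from \autoref{lem:tail-uniqueness}; and continuity via uniform tail constants on a ball about $A$, a Lipschitz stability estimate for $w_B$ in a norm weakened by one logarithm (the paper uses the weight $t/\ell_\beta(t)^3$), and finite-time continuous dependence back to $t=1$.

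The residual and background differences cost only extra powers of $\log t$, not a derivative. So your primary weighted-norm route closes, and the compactness fallback is not needed.
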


\begin{proof}
Equation \eqref{eq:a-beta-continuity} and the continuity of the $L^2$ norm
show first that $\fkX_{\mathrm{adm}}$ is open. Fix
$A\in\fkX_{\mathrm{adm}}$. The strict inequalities at $A$ and
\eqref{eq:a-beta-continuity} give $\varepsilon_A>0$ and a bounded ball
$\mathcal U$ about $A$ such that
\[
 \sup_{B\in\mathcal U}\abs\beta\,\mathfrak a_\beta(B)^2
 \leq\eta_0-\varepsilon_A,
 \qquad
 \sup_{B\in\mathcal U}\norm B_2\leq m_0-\varepsilon_A.
\]
The construction in
\autoref{thm:wave-operator} permits one starting time $T_0$, one contraction
ball, and one contraction constant for every $B\in\mathcal U$. Let
$u^{(B)}$ denote the corresponding solution and set
$w_B=u^{(B)}-u_B^{\mathrm{as}}$, where
$u_B^{\mathrm{as}}$ is obtained from the formula for $u_A$ by replacing
$A$ with $B$. Define $\cM_B$ by the formula for $\cM_A$ in the proof of
\autoref{thm:wave-operator}, again with $A$ replaced by $B$.

The explicit formulas for the background and the residual give a constant
$C_{\mathcal U}>0$ such that, for $B,C\in\mathcal U$ and $t\geq T_0$,
\begin{align}
 \norm{u_B^{\mathrm{as}}(t)-u_C^{\mathrm{as}}(t)}_{W^{1,\infty}}
 &\leq C_{\mathcal U}t^{-1/2}\ell_\beta(t)
 \norm{B-C}_{\fkX},
 \label{eq:background-parameter-difference}\\
 \norm{\cR_B(t)-\cR_C(t)}_{H^1}
 &\leq C_{\mathcal U}t^{-2}\ell_\beta(t)^3
 \norm{B-C}_{\fkX}.
 \label{eq:residual-parameter-difference}
\end{align}
To compare the corrections, define
\begin{align*}
 \mathcal Y_{T_0}
 &:=\left\{h\in C([T_0,\infty);H^1(\R)):
 \norm h_{\mathcal Y_{T_0}}<\infty\right\},\\
 \norm h_{\mathcal Y_{T_0}}
 &:=\sup_{t\geq T_0}\frac{t}{\ell_\beta(t)^3}\norm{h(t)}_{H^1}.
\end{align*}
Every correction in the common $\mathcal X_{T_0}$ ball belongs to
$\mathcal Y_{T_0}$. If $w$ lies in this common ball, the polynomial
identities used in \autoref{lem:nonlinear-H1} and
\eqref{eq:background-parameter-difference} give
\begin{equation}\label{eq:nonlinearity-parameter-difference}
 \norm{\cM_B(w)(t)-\cM_C(w)(t)}_{H^1}
 \leq C_{\mathcal U}t^{-2}\ell_\beta(t)^3
 \norm{B-C}_{\fkX}.
\end{equation}
Indeed, every cubic term on the left contains one factor from
$u_B^{\mathrm{as}}-u_C^{\mathrm{as}}$ and at least one factor from $w$.
The quintic terms have two additional decaying factors. We split
\[
 \cM_B(w_B)-\cM_C(w_C)
 =\bigl[\cM_B(w_B)-\cM_B(w_C)\bigr]
 +\bigl[\cM_B(w_C)-\cM_C(w_C)\bigr].
\]
Subtracting the two final-state integral equations and using
\eqref{eq:residual-parameter-difference}--
\eqref{eq:nonlinearity-parameter-difference}, together with the difference
estimate in \autoref{lem:nonlinear-H1}, gives a constant
$0\leq\vartheta_*<1$ such that, after increasing $C_{\mathcal U}$,
\begin{equation}\label{eq:wave-operator-parameter-stability}
 \norm{w_B-w_C}_{\mathcal Y_{T_0}}
 \leq \vartheta_*\norm{w_B-w_C}_{\mathcal Y_{T_0}}
 +C_{\mathcal U}\norm{B-C}_{\fkX}.
\end{equation}
Here the long-range term is controlled by
\[
 \sup_{t\geq T_0}\frac{t}{\ell_\beta(t)^3}
 \int_t^\infty s^{-2}\ell_\beta(s)^3\dd s<\infty.
\]
The effective smallness condition makes its coefficient strictly smaller
than one, and increasing $T_0$ controls every term with additional decay.
The terms containing the profile difference are bounded by the same
integral. This proves
\eqref{eq:wave-operator-parameter-stability}.

It follows that $w_B(T_0)\to w_A(T_0)$ in $H^1$ when $B\to A$ in
$\fkX$. The background is continuous at the fixed time $T_0$, so
$u^{(B)}(T_0)\to u^{(A)}(T_0)$ in $H^1$. Finite-time continuous dependence from
\autoref{prop:local-theory} propagates the convergence to time $1$. The
fixed translation between $u(1)$ and $v(1)$ proves continuity of
$\Omega_+$.

Suppose that $\Omega_+(A)=\Omega_+(B)$. Uniqueness for the Cauchy problem
gives one solution with both asymptotic profiles. After the self-similar
change of variables, the two estimates \eqref{eq:wave-operator-est} imply
\[
 \norm{Ae^{-i\beta\abs A^2\log t}
 -Be^{-i\beta\abs B^2\log t}}_2\longrightarrow0.
\]
The reverse triangle inequality first gives $\abs A=\abs B$ almost
everywhere. The logarithmic phase factors then agree, and the same limit
reduces to $\norm{A-B}_2=0$. Hence $\Omega_+$ is injective.

Finally, multiplication by a constant phase preserves the equation and
the final-state asymptotic. Gauge covariance follows from the uniqueness in
\autoref{lem:tail-uniqueness}.
\end{proof}

\section{Small-data forward modified scattering}\label{sec:forward-scattering}

The direct self-similar high-derivative argument used for constant-coefficient cubic--quintic equations does not extend uniformly to \eqref{eq:comoving}, since
\[
 \partial_\xi^j\left[s^{-2}\cW\bigl(s(\xi+\gamma)\bigr)\right]
 =s^{j-2}\cW^{(j)}\bigl(s(\xi+\gamma)\bigr),
\]
which grows near $\xi=-\gamma$ when $j\geq3$. We instead adapt the wave-packet testing method of Ifrim and Tataru \cite{IT2015,IT2024} to the quadratic Schr\"odinger dispersion. We rederive the packet residual, comparison estimates, and effective ordinary differential equation in the present normalization, with a separate estimate for the localized quintic term.

We use the weighted space $\Sigma$ defined by \eqref{def:Sigma} and the Galilean vector field $L(t)$
introduced in \autoref{subsec:local-theory}. The commutator and
gauge-covariant identities needed below are
\eqref{eq:L-commute}--\eqref{eq:L-W}.

\subsection{Energy and vector-field estimates}

By \autoref{prop:local-theory}, mass is conserved throughout the
$H^1$ lifespan:
\[
 \norm{u(t)}_2=\norm{u(1)}_2.
\]

\begin{lemma}\label{lem:energy-packet-section}
Let $T\geq1$, $D_{\mathrm{boot}}\geq1$, and $\varepsilon>0$. Let $u$ be a
smooth solution to
\eqref{eq:comoving} on $[1,T]$. Assume
\begin{equation}\label{eq:bootstrap-Linf}
 \norm{u(t)}_{L^\infty_y}\leq D_{\mathrm{boot}}\varepsilon t^{-1/2},
 \qquad 1\leq t\leq T.
\end{equation}
If $\norm{u(1)}_\Sigma\leq\varepsilon$, then
\begin{equation}\label{eq:energy-growth-packet}
 \norm{u(t)}_{H^1_y}+\norm{Lu(t)}_{L^2_y}
 \leq C\varepsilon t^{\delta_E},
 \qquad
 \delta_E=C_E\abs\beta D_{\mathrm{boot}}^2\varepsilon^2,
 \qquad 1\leq t\leq T,
\end{equation}
provided
\begin{equation}\label{eq:energy-lemma-smallness}
 \norm{\cW}_\infty D_{\mathrm{boot}}^4\varepsilon^4
 +\norm{\cW'}_2D_{\mathrm{boot}}^5\varepsilon^4\leq1.
\end{equation}
The constants $C$ and $C_E$ are independent of
$D_{\mathrm{boot}}$, $\varepsilon$, and $\gamma$. The integrable quintic
contributions affect only the time-independent prefactor, not the exponent
$\delta_E$.
\end{lemma}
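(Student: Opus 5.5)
We will run $L^2$ energy estimates for the two quantities $\partial_yu$ and $Lu=L(t)u$ separately. Each is closed by Gronwall's inequality. The cubic linearization gives the non-integrable coefficient $\abs\beta\norm u_\infty^2\lesssim\abs\beta D_{\mathrm{boot}}^2\varepsilon^2t^{-1}$, which produces the $t^{\delta_E}$ growth. The localized quintic terms give integrable contributions. Mass conservation from \autoref{prop:local-theory} controls $\norm{u(t)}_2\leq\varepsilon$ throughout. At $t=1$ we have $\norm{\partial_yu(1)}_2+\norm{Lu(1)}_2\lesssim\norm{u(1)}_\Sigma\leq\varepsilon$.

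\emph{Step 1 (the vector field).} Since $u$ is smooth, \eqref{eq:L-commute}--\eqref{eq:L-W} give exactly equation \eqref{eq:local-VL-equation} for $V_L=Lu$. Pairing with $\overline{V_L}$ and taking imaginary parts removes the Laplacian. For the real-linear terms, only the $\overline{V_L}$ parts contribute, with coefficient bounded by $\abs{u}^2$ or $\abs{u}^4$. This gives the differential inequality
\[
 \frac{\dd}{\dd t}\norm{V_L}_2\leq C\bigl(\abs\beta\norm u_\infty^2+\norm{\cW}_\infty\norm u_\infty^4\bigr)\norm{V_L}_2+2t\norm{\cW_t'\abs u^4u}_2 .
\]
The source is the delicate term. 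Estimating it by $t\norm{\cW'}_\infty\norm u_\infty^4\norm u_2$ gives $t^{-1}D_{\mathrm{boot}}^4\varepsilon^5$, which is not integrable. We therefore put the $L^2$ norm on $\cW'$ instead:
\[
 2t\norm{\cW'}_2\norm u_\infty^5\leq 2\norm{\cW'}_2D_{\mathrm{boot}}^5\varepsilon^5t^{-3/2}.
\]
This is integrable, and by \eqref{eq:energy-lemma-smallness} its total contribution is $\lesssim\varepsilon$. The same smallness makes the quintic coefficient $\norm{\cW}_\infty D_{\mathrm{boot}}^4\varepsilon^4t^{-2}$ integrable with an $O(1)$ integral. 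Gronwall then gives
\[
 \norm{Lu(t)}_2\leq C\varepsilon\, t^{C\abs\beta D_{\mathrm{boot}}^2\varepsilon^2}.
\]
Here the quintic pieces affect only the prefactor, as the statement claims.

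\emph{Step 2 ($\partial_yu$).} Differentiate \eqref{eq:comoving}: $w=\partial_yu$ solves the linear equation
\[
 (i\partial_t+\partial_y^2)w=\beta(2\abs u^2w+u^2\overline w)-\cW_t(3\abs u^4w+2\abs u^2u^2\overline w)-\cW_t'\abs u^4u .
\]
The same energy argument applies. The source now lacks the factor $t$, so it is bounded by $\norm{\cW'}_2D_{\mathrm{boot}}^5\varepsilon^5t^{-5/2}$, which is comfortably integrable. Gronwall gives the same exponent.

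\emph{Constants and the main obstacle.} We use $D_{\mathrm{boot}}\geq1$ and absorb numerical factors into $C$ and $C_E$. None of these depend on $\gamma$, because translation leaves $\norm{\cW_t}_\infty$ and $\norm{\cW_t'}_2$ unchanged. The main obstacle is the vector-field source $2it\cW'\abs u^4u$. The choice of $\norm{\cW'}_2$ rather than $\norm{\cW'}_\infty$ is essential there, and it explains the form of \eqref{eq:energy-lemma-smallness}.
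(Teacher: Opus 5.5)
Your proposal is correct and follows the paper's proof essentially step for step. Both run $L^2$ energy inequalities for $\partial_yu$ and $Lu$, bound the vector-field source by $2t\norm{\cW'}_2\norm u_\infty^5\lesssim\norm{\cW'}_2D_{\mathrm{boot}}^5\varepsilon^5t^{-3/2}$, and close with Gronwall: the cubic coefficient produces $t^{\delta_E}$, while \eqref{eq:energy-lemma-smallness} keeps the quintic contributions in the prefactor. Your observation that only the $\overline{V_L}$ terms survive after taking imaginary parts slightly sharpens the paper's cruder constants $3$ and $5$, and the paper adds only that the norm derivatives are taken almost everywhere (or after regularizing by $(\norm{\cdot}_2^2+\nu^2)^{1/2}$).
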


\begin{proof}
Mass is conserved because both nonlinear coefficients are real. Differentiate \eqref{eq:comoving} once in $y$. The standard $L^2$ energy identity gives
\begin{align}
 \frac{\dd}{\dd t}\norm{\partial_yu(t)}_2
 &\leq C\abs\beta\norm u_\infty^2\norm{\partial_yu}_2
 +C\norm{\cW}_\infty\norm u_\infty^4\norm{\partial_yu}_2
 +C\norm{\cW'}_2\norm u_\infty^5.
 \label{eq:dy-energy-detailed}
\end{align}
The last term follows from
\[
 \norm{\cW'(y+\gamma t)\abs{u}^4u}_2
 \leq\norm{\cW'}_2\norm u_\infty^5.
\]
Under \eqref{eq:bootstrap-Linf}, the three contributions in \eqref{eq:dy-energy-detailed} have sizes
\[
 C\abs\beta D_{\mathrm{boot}}^2\varepsilon^2t^{-1},
 \qquad
 CD_{\mathrm{boot}}^4\varepsilon^4t^{-2},
 \qquad
 C\norm{\cW'}_2D_{\mathrm{boot}}^5\varepsilon^5t^{-5/2}.
\]
Apply $L$ to \eqref{eq:comoving}. Using \eqref{eq:L-commute}, \eqref{eq:L-cubic}, \eqref{eq:L-quintic}, and \eqref{eq:L-W}, we obtain
\begin{align*}
 (i\partial_t+\partial_y^2)Lu
 &=\beta\left(2\abs{u}^2Lu-u^2\overline{Lu}\right)\\
 &\quad-\cW(y+\gamma t)
 \left(3\abs{u}^4Lu-2\abs{u}^2u^2\overline{Lu}\right)
 -2it\cW'(y+\gamma t)\abs{u}^4u.
\end{align*}
Therefore
\begin{align}
 \frac{\dd}{\dd t}\norm{Lu(t)}_2
 &\leq 3\abs\beta\norm u_\infty^2\norm{Lu}_2
 +5\norm{\cW}_\infty\norm u_\infty^4\norm{Lu}_2
 +2t\norm{\cW'}_2\norm u_\infty^5.
 \label{eq:L-energy-detailed}
\end{align}
The two differential inequalities hold for almost every $t$. Equivalently,
one may regularize each norm by
$(\norm{\cdot}_2^2+\nu^2)^{1/2}$ and let $\nu\downarrow0$.
The last term is bounded by
$C\norm{\cW'}_2D_{\mathrm{boot}}^5\varepsilon^5t^{-3/2}$, which is integrable. Since
\[
 \norm{L(1)u(1)}_2
 \leq\norm{yu(1)}_2+2\norm{\partial_yu(1)}_2
 \leq3\varepsilon,
\]
Gronwall's inequality applied to \eqref{eq:dy-energy-detailed} and
\eqref{eq:L-energy-detailed} gives
\[
 \norm{u(t)}_{H^1}+\norm{Lu(t)}_2
 \leq C\left(\varepsilon+\norm{\cW'}_2D_{\mathrm{boot}}^5\varepsilon^5\right)
 \exp\left(C\norm{\cW}_\infty D_{\mathrm{boot}}^4\varepsilon^4\right)
 t^{C\abs\beta D_{\mathrm{boot}}^2\varepsilon^2}.
\]
Condition \eqref{eq:energy-lemma-smallness} bounds the prefactor by a
constant multiple of $\varepsilon$. This proves
\eqref{eq:energy-growth-packet} after fixing $C_E$ large enough. In
particular, $\delta_E=0$ when $\beta=0$.
\end{proof}

The vector field also gives a useful pointwise estimate. If
\begin{equation}\label{eq:g-osc}
 g(t,y)=e^{-iy^2/(4t)}u(t,y),
\end{equation}
then
\begin{equation}\label{eq:g-L-relation}
 \partial_yg(t,y)=\frac{1}{2it}e^{-iy^2/(4t)}Lu(t,y).
\end{equation}
Consequently the one-dimensional Sobolev inequality gives
\begin{equation}\label{eq:pseudoconformal-Sobolev}
 \norm{u(t)}_\infty^2
 \leq t^{-1}\norm{u(t)}_2\norm{Lu(t)}_2.
\end{equation}
The estimate \eqref{eq:pseudoconformal-Sobolev} alone loses the small power $t^{\delta_E/2}$. The wave-packet coefficient introduced below removes this loss.

\subsection{Explicit Schr\"odinger wave packets}

Fix a real function $\chi\in C_c^\infty(\R)$ with
\begin{equation*}
 \int_\R\chi(z)\dd z=1.
\end{equation*}
For $t\geq1$ and $\xi\in\R$, define
\begin{equation}\label{eq:packet-def-explicit}
 z=z(t,y,\xi)=\frac{y-\xi t}{\sqrt{2t}},
 \qquad
 \Psi_\xi(t,y)=\frac1{\sqrt2}\chi(z)e^{iy^2/(4t)}.
\end{equation}
The cutoff confines $\Psi_\xi$ to a tube of width comparable to $t^{1/2}$ around the ray $y=\xi t$. The oscillatory factor is centered at frequency $\xi/2$, as required by the group-velocity relation for the quadratic Schr\"odinger phase.

Define the packet coefficient by
\begin{equation}\label{eq:Gamma-def}
 \Gamma(t,\xi)=\int_\R u(t,y)\overline{\Psi_\xi(t,y)}\dd y.
\end{equation}

The normalization in \eqref{eq:packet-def-explicit} makes the packet
coefficient agree asymptotically with the self-similar amplitude. Let $A$ be
bounded and continuous. If
\[
 u(t,y)=t^{-1/2}e^{iy^2/(4t)}A(y/t),
\]
then the change of variables $y=t\xi+\sqrt{2t}z$ gives
\[
 \Gamma(t,\xi)=\int A\left(\xi+\sqrt{2/t}\,z\right)\chi(z)\dd z,
\]
which converges to $A(\xi)$ by dominated convergence. Thus $\Gamma$ is a
smoothed self-similar profile at resolution $t^{-1/2}$.

\begin{lemma}\label{lem:packet-residual}
The packet in \eqref{eq:packet-def-explicit} satisfies
\begin{equation}\label{eq:packet-residual-exact}
 (i\partial_t+\partial_y^2)\Psi_\xi
 =\frac{e^{iy^2/(4t)}}{2\sqrt2\,t}K_\chi(z),
\end{equation}
where
\begin{equation*}
 K_\chi(z)=\chi''(z)+i\left(z\chi'(z)+\chi(z)\right).
\end{equation*}
Moreover
\begin{equation}\label{eq:Kchi-zero-mean}
 \int_\R K_\chi(z)\dd z=0.
\end{equation}
\end{lemma}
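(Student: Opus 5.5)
The plan is to verify \eqref{eq:packet-residual-exact} by direct differentiation, organized so that the quadratic phase terms cancel at once. Write $\Psi_\xi=\frac1{\sqrt2}e^{iy^2/(4t)}\chi(z)$ with $z=(y-\xi t)/\sqrt{2t}$. The only inputs are the partial derivatives of $z$:
\[
 \partial_yz=\frac1{\sqrt{2t}},
 \qquad
 \partial_tz=-\frac{\xi}{\sqrt{2t}}-\frac{z}{2t}.
\]
The second one comes from differentiating both the numerator $y-\xi t$ and the factor $(2t)^{-1/2}$. We also need the phase derivatives $\partial_t\bigl(iy^2/(4t)\bigr)=-iy^2/(4t^2)$ and $\partial_y\bigl(iy^2/(4t)\bigr)=iy/(2t)$.

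First compute the time part:
\[
 i\partial_t\Psi_\xi=\frac{e^{iy^2/(4t)}}{\sqrt2}
 \Bigl[\frac{y^2}{4t^2}\chi(z)
 +i\chi'(z)\Bigl(-\frac{\xi}{\sqrt{2t}}-\frac{z}{2t}\Bigr)\Bigr].
\]
Next compute the spatial part:
\[
 \partial_y^2\Psi_\xi=\frac{e^{iy^2/(4t)}}{\sqrt2}
 \Bigl[\frac{\chi''(z)}{2t}
 +\frac{iy}{t\sqrt{2t}}\chi'(z)
 +\frac{i}{2t}\chi(z)
 -\frac{y^2}{4t^2}\chi(z)\Bigr].
\]
Adding the two, the $y^2/(4t^2)$ terms cancel. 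This is the Galilean or pseudoconformal cancellation expressing that $e^{iy^2/(4t)}$ is transported by the free flow.

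The only step requiring care is collecting the $\chi'$ terms. Their coefficient is
\[
 \frac{i}{\sqrt{2t}}\Bigl(\frac yt-\xi\Bigr)-\frac{iz}{2t}.
\]
Using $y/t-\xi=\sqrt{2t}\,z/t$, the first summand equals $iz/t$, so the total coefficient is $iz/(2t)$. Hence the sum becomes
\[
 \frac{e^{iy^2/(4t)}}{2\sqrt2\,t}
 \bigl(\chi''+iz\chi'+i\chi\bigr)(z),
\]
which is exactly $K_\chi$. The $\xi$-dependence disappears entirely except through $z$, which is the reason for centering the packet on the ray $y=\xi t$ with width $\sqrt{2t}$.

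For \eqref{eq:Kchi-zero-mean}, observe that
\[
 K_\chi=\chi''+i(z\chi)'.
\]
Both terms are derivatives of compactly supported functions, so each integrates to zero. No step is a real obstacle; the main risk is bookkeeping of the $\sqrt{2t}$ factors in $\partial_tz$, and I would double-check that step by testing on the case $\xi=0$.
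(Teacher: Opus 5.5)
Your computation is correct and is essentially the paper's proof. The paper first records the conjugation identity $(i\partial_t+\partial_y^2)(e^{i\Phi}b)=e^{i\Phi}\bigl[i\bigl(\partial_tb+\frac yt\partial_yb+\frac{b}{2t}\bigr)+\partial_y^2b\bigr]$ for $\Phi=y^2/(4t)$, which carries out your $y^2/(4t^2)$ cancellation once in general, and then substitutes $b=2^{-1/2}\chi(z)$ with the same formulas for $\partial_tz$ and $\partial_yz$; your zero-mean argument via $K_\chi=\chi''+i(z\chi)'$ is the same integration by parts.
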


\begin{proof}
Write $\Phi_{\mathrm{Sch}}(t,y)=y^2/(4t)$. For a scalar amplitude
$b_{\mathrm{pkt}}$, direct differentiation gives
\begin{equation}\label{eq:conjugated-P}
 (i\partial_t+\partial_y^2)(e^{i\Phi_{\mathrm{Sch}}}b_{\mathrm{pkt}})
 =e^{i\Phi_{\mathrm{Sch}}}\left[
 i\left(\partial_tb_{\mathrm{pkt}}+\frac yt\partial_yb_{\mathrm{pkt}}
 +\frac{b_{\mathrm{pkt}}}{2t}\right)
 +\partial_y^2b_{\mathrm{pkt}}
 \right].
\end{equation}
For $b_{\mathrm{pkt}}=2^{-1/2}\chi(z)$, one has
\[
 \partial_tz=-\frac{\xi}{\sqrt{2t}}-\frac{z}{2t},
 \qquad
 \partial_yz=\frac1{\sqrt{2t}}.
\]
Substitution into \eqref{eq:conjugated-P} yields \eqref{eq:packet-residual-exact}. Finally,
\[
 \int\chi''=0,
 \qquad
 \int z\chi'=-\int\chi,
\]
which proves \eqref{eq:Kchi-zero-mean}.
\end{proof}

Using \eqref{eq:g-osc} and the change of variables $y=t\xi+\sqrt{2t}\,z$, we obtain the exact averaging identity
\begin{equation}\label{eq:Gamma-average}
 \Gamma(t,\xi)
 =\sqrt t\int_\R g\left(t,t\xi+\sqrt{2t}\,z\right)\chi(z)\dd z.
\end{equation}

\begin{lemma}\label{lem:packet-comparison}
Let $t\geq1$ and suppose that $u(t)\in\Sigma$. Define $\Gamma(t)$ by
\eqref{eq:Gamma-def}. Then
\begin{align}
 \norm{\Gamma(t)}_{L^\infty_\xi}
 &\leq C\sqrt t\norm{u(t)}_{L^\infty_y},
 \notag\\
 \norm{\Gamma(t)}_{L^2_\xi}
 &\leq C\norm{u(t)}_{L^2_y},
 \label{eq:Gamma-L2}\\
 \norm{\partial_\xi\Gamma(t)}_{L^2_\xi}
 &\leq C\norm{Lu(t)}_{L^2_y},
 \label{eq:Gamma-derivative}\\
 \norm{u(t,t\xi)-t^{-1/2}e^{it\xi^2/4}\Gamma(t,\xi)}_{L^\infty_\xi}
 &\leq Ct^{-3/4}\norm{Lu(t)}_{L^2_y},
 \label{eq:packet-pointwise}\\
 \norm{u(t,t\xi)-t^{-1/2}e^{it\xi^2/4}\Gamma(t,\xi)}_{L^2_\xi}
 &\leq Ct^{-1}\norm{Lu(t)}_{L^2_y}.
 \label{eq:packet-L2xi}
\end{align}
\end{lemma}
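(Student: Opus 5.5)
Everything runs through the modulated function $g=e^{-iy^2/(4t)}u$ from \eqref{eq:g-osc}, for which $\abs g=\abs u$ and, by \eqref{eq:g-L-relation}, $\norm{\partial_yg}_2=(2t)^{-1}\norm{Lu}_2$. By \eqref{eq:Gamma-average},
\[
\Gamma(t,\xi)=\sqrt t\int_\R g(t,t\xi+\sqrt{2t}z)\chi(z)\dd z .
\]
The $L^\infty$ bound is then immediate from $\abs g=\abs u$ and $\chi\in L^1$.

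For \eqref{eq:Gamma-L2}, view $\Gamma(t,\cdot)$ as an average of translates. Minkowski's integral inequality gives
\[
\norm{\Gamma}_{L^2_\xi}\leq\sqrt t\int\abs{\chi(z)}\,\norm{g(t,t\xi+\sqrt{2t}z)}_{L^2_\xi}\dd z .
\]
The substitution $y=t\xi$ shows that each inner norm equals $t^{-1/2}\norm u_2$, which yields the bound. For \eqref{eq:Gamma-derivative}, differentiate under the integral:
\[
\partial_\xi\Gamma=t^{3/2}\int(\partial_yg)(t,t\xi+\sqrt{2t}z)\chi(z)\dd z .
\]
The same Minkowski argument gives $\norm{\partial_\xi\Gamma}_2\leq Ct\norm{\partial_yg}_2$, and this equals $\tfrac C2\norm{Lu}_2$ by \eqref{eq:g-L-relation}. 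To justify differentiating, use $u(t)\in\Sigma$ to get $\partial_yg\in L^2$, and pass through a density argument if needed.

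For the comparison estimates, first note that $t^{-1/2}e^{it\xi^2/4}\Gamma(t,\xi)$ is exactly $e^{iy^2/(4t)}$ times the average of $g$ over $\chi$ at $y=t\xi$. Since $\int\chi=1$, the difference equals
\[
e^{it\xi^2/4}\int\bigl[g(t,t\xi)-g(t,t\xi+\sqrt{2t}z)\bigr]\chi(z)\dd z .
\]
For the pointwise bound \eqref{eq:packet-pointwise}, use the one-dimensional H\"older estimate $\abs{g(a)-g(b)}\leq\abs{a-b}^{1/2}\norm{\partial_yg}_2$. Here $\abs{a-b}=\sqrt{2t}\abs z$, so the difference is at most $Ct^{1/4}\norm{\partial_yg}_2\int\abs z^{1/2}\abs\chi\dd z$, that is $Ct^{1/4}t^{-1}\norm{Lu}_2=Ct^{-3/4}\norm{Lu}_2$.

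For \eqref{eq:packet-L2xi}, write $g(a)-g(b)$ as $\int_0^1$ of $\partial_yg$ along the segment and apply Minkowski in $(z,\theta)$. The rescaling $y=t\xi$ costs $t^{-1/2}$ in $L^2_\xi$, which gives
\[
\sqrt{2t}\int\abs z\abs\chi\dd z\;t^{-1/2}\norm{\partial_yg}_2\lesssim t^{-1}\norm{Lu}_2 .
\]

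None of these steps is hard. The only point needing care is keeping the scaling factors ($\sqrt t$, the $t^{-1/2}$ from $y=t\xi$, and the $\sqrt{2t}$ packet width) consistent. I will also make sure the $L^2$ difference estimate uses the translation-type argument rather than the H\"older bound, since the H\"older route would only give $t^{-3/4}$ in place of the stated $t^{-1}$.
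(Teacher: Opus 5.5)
Your proposal is correct and follows essentially the same route as the paper. Both arguments use the averaging identity \eqref{eq:Gamma-average}, Minkowski's inequality with the rescaling $y=t\xi$, and the relation $\norm{\partial_yg}_2=(2t)^{-1}\norm{Lu}_2$. The pointwise comparison uses the $1/2$-H\"older bound, and the $L^2_\xi$ comparison uses the translation and fundamental-theorem-of-calculus estimate.
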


\begin{proof}
The first bound follows directly from \eqref{eq:Gamma-def} because $\norm{\Psi_\xi}_1\leq C\sqrt t$. The averaging identity \eqref{eq:Gamma-average} and Minkowski's inequality give
\[
 \norm{\Gamma(t)}_{L^2_\xi}
 \leq\sqrt t\int\abs{\chi(z)}
 \norm{g(t,t\xi+\sqrt{2t}\,z)}_{L^2_\xi}\dd z
 \leq C\norm{u(t)}_2.
\]
Differentiating \eqref{eq:Gamma-average} in $\xi$ and using \eqref{eq:g-L-relation} yields
\[
 \partial_\xi\Gamma(t,\xi)
 =\frac{\sqrt t}{2i}
 \int e^{-i(t\xi+\sqrt{2t}z)^2/(4t)}
 Lu(t,t\xi+\sqrt{2t}z)\chi(z)\dd z.
\]
Minkowski's inequality gives \eqref{eq:Gamma-derivative}.

Set
\[
 g_\chi(t,\xi)=t^{-1/2}\Gamma(t,\xi).
\]
From \eqref{eq:Gamma-average},
\begin{equation}\label{eq:gchi-average}
 g_\chi(t,\xi)=\int g(t,t\xi+\sqrt{2t}z)\chi(z)\dd z.
\end{equation}
The fundamental theorem of calculus and Cauchy--Schwarz imply
\[
 \abs{g(t,x)-g(t,x_0)}
 \leq\abs{x-x_0}^{1/2}\norm{\partial_yg(t)}_2.
\]
With \eqref{eq:g-L-relation} and $x_0=t\xi$, this gives
\begin{equation}\label{eq:g-average-pointwise}
 \abs{g(t,t\xi)-g_\chi(t,\xi)}
 \leq Ct^{-3/4}\norm{Lu(t)}_2.
\end{equation}
Multiplication by the central phase gives \eqref{eq:packet-pointwise}.

For the $L^2_\xi$ estimate, translation and the fundamental theorem of calculus give, for each fixed $z$,
\begin{align*}
 &\norm{g(t,t\xi+\sqrt{2t}z)-g(t,t\xi)}_{L^2_\xi}\\
 &\qquad=t^{-1/2}
 \norm{g(t,\cdot+\sqrt{2t}z)-g(t,\cdot)}_{L^2_y}
 \leq C\abs z\norm{\partial_yg(t)}_2
 \leq Ct^{-1}\abs z\norm{Lu(t)}_2.
\end{align*}
Average this inequality against $\chi$ and use \eqref{eq:gchi-average}. This proves \eqref{eq:packet-L2xi}.
\end{proof}

\subsection{Testing the equation and extracting the resonant cubic term}

The next lemma gives the effective dynamics of the packet coefficient and separates the localized quintic error from the cubic resonant term.

\begin{lemma}\label{lem:Gamma-ODE}
Let $T\geq1$, and let $u$ be a smooth solution in $\Sigma$ of
\eqref{eq:comoving} on $[1,T]$. Then
\begin{equation}\label{eq:Gamma-ODE}
 \partial_t\Gamma(t,\xi)
 =-i\beta t^{-1}\abs{\Gamma(t,\xi)}^2\Gamma(t,\xi)
 +\mathcal E_\Gamma(t,\xi),
\end{equation}
where
\begin{align*}
 \mathcal E_\Gamma
 &=\mathcal E_{\mathrm{lin}}+\mathcal E_{\mathrm{cub}}+\mathcal E_{\cW},\\
 \mathcal E_{\mathrm{lin}}(t,\xi)
 &=i\int_\R u(t,y)
 \overline{(i\partial_t+\partial_y^2)\Psi_\xi(t,y)}\dd y,\\
 \mathcal E_{\mathrm{cub}}(t,\xi)
 &=-i\beta\left[
 \int_\R\abs{u(t,y)}^2u(t,y)\overline{\Psi_\xi(t,y)}\dd y
 -t^{-1}\abs{\Gamma(t,\xi)}^2\Gamma(t,\xi)
 \right],\\
 \mathcal E_{\cW}(t,\xi)
 &=i\int_\R\cW(y+\gamma t)\abs{u(t,y)}^4u(t,y)
 \overline{\Psi_\xi(t,y)}\dd y.
\end{align*}
For every $1\leq t\leq T$,
\begin{align}
 \norm{\mathcal E_{\mathrm{lin}}(t)}_{L^\infty_\xi}
 &\leq Ct^{-5/4}\norm{Lu(t)}_2,
 \label{eq:Elinear-inf}\\
 \norm{\mathcal E_{\mathrm{lin}}(t)}_{L^2_\xi}
 &\leq Ct^{-3/2}\norm{Lu(t)}_2,
 \label{eq:Elinear-L2}\\
 \norm{\mathcal E_{\mathrm{cub}}(t)}_{L^\infty_\xi}
 &\leq C\abs\beta t^{-1/4}\norm{u(t)}_\infty^2\norm{Lu(t)}_2,
 \label{eq:Ecubic-inf}\\
 \norm{\mathcal E_{\mathrm{cub}}(t)}_{L^2_\xi}
 &\leq C\abs\beta t^{-1/2}\norm{u(t)}_\infty^2\norm{Lu(t)}_2,
 \label{eq:Ecubic-L2}\\
 \norm{\mathcal E_{\cW}(t)}_{L^\infty_\xi}
 &\leq C\norm{\cW}_1\norm{u(t)}_\infty^5,
 \label{eq:EW-inf}\\
 \norm{\mathcal E_{\cW}(t)}_{L^2_\xi}
 &\leq Ct^{-1/4}\norm{\cW}_1\norm{u(t)}_\infty^5.
 \label{eq:EW-L2}
\end{align}
\end{lemma}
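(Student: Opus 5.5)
The identity \eqref{eq:Gamma-ODE} is obtained by differentiating \eqref{eq:Gamma-def} in time and substituting the equation; the six bounds then follow from \autoref{lem:packet-residual} and \autoref{lem:packet-comparison}.

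\textbf{The identity.} Since $u$ is smooth and $\Psi_\xi$ is compactly supported in $y$, we may compute
\[
 \partial_t\Gamma=\int\partial_tu\,\overline{\Psi_\xi}\dd y+\int u\,\overline{\partial_t\Psi_\xi}\dd y .
\]
In the first integral we substitute $\partial_tu=i\partial_y^2u-i\beta\abs u^2u+i\cW(y+\gamma t)\abs u^4u$. Two integrations by parts move $\partial_y^2$ onto $\overline{\Psi_\xi}$. The linear contributions then combine into
\[
 \int u\,\overline{\partial_t\Psi_\xi+i\partial_y^2\Psi_\xi}\dd y
 =i\int u\,\overline{(i\partial_t+\partial_y^2)\Psi_\xi}\dd y=\mathcal E_{\mathrm{lin}}.
\]
The nonlinear contributions give $-i\beta\int\abs u^2u\overline{\Psi_\xi}+\mathcal E_{\cW}$. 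Adding and subtracting $-i\beta t^{-1}\abs\Gamma^2\Gamma$ produces $\mathcal E_{\mathrm{cub}}$ and proves \eqref{eq:Gamma-ODE}.

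\textbf{The linear error.} By \eqref{eq:packet-residual-exact},
\[
 \mathcal E_{\mathrm{lin}}=\frac{i}{2\sqrt2\,t}\int g(t,y)\overline{K_\chi(z)}\dd y .
\]
Changing variables $y=t\xi+\sqrt{2t}z$ gives
\[
 \mathcal E_{\mathrm{lin}}=c\,t^{-1/2}\int g(t,t\xi+\sqrt{2t}z)\overline{K_\chi(z)}\dd z .
\]
Since $\overline{K_\chi}$ has zero mean by \eqref{eq:Kchi-zero-mean}, we may replace $g(t,t\xi+\sqrt{2t}z)$ by $g(t,t\xi+\sqrt{2t}z)-g(t,t\xi)$.
\begin{itemize}
\item For the $L^\infty$ bound, use the Hölder-$1/2$ estimate from the proof of \autoref{lem:packet-comparison}. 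It bounds the difference by $(\sqrt{2t}\abs z)^{1/2}(2t)^{-1}\norm{Lu}_2$, which gives $t^{-1/2}t^{1/4}t^{-1}=t^{-5/4}$, i.e.\ \eqref{eq:Elinear-inf}.
\item For the $L^2_\xi$ bound, use the translation estimate from the same proof. It bounds the difference in $L^2_\xi$ by $Ct^{-1}\abs z\norm{Lu}_2$, which gives $t^{-3/2}$, i.e.\ \eqref{eq:Elinear-L2}.
\end{itemize}

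\textbf{The cubic error.} This is the main step; the key is that the phase of $\abs u^2u$ matches that of $\Psi_\xi$. Write $\abs u^2u=e^{iy^2/(4t)}\abs g^2g$. Then
\[
 \int\abs u^2u\,\overline{\Psi_\xi}\dd y=\sqrt t\int\abs g^2g(t,t\xi+\sqrt{2t}z)\chi(z)\dd z .
\]
By \eqref{eq:Gamma-average}, $t^{-1}\abs\Gamma^2\Gamma=\sqrt t\,\abs{g_\chi}^2g_\chi$. Hence
\[
 \mathcal E_{\mathrm{cub}}=-i\beta\sqrt t\int\bigl[\abs g^2g(t,t\xi+\sqrt{2t}z)-\abs{g_\chi}^2g_\chi(t,\xi)\bigr]\chi(z)\dd z .
\]
Factor the difference of cubes; the quadratic coefficient is bounded by $C\norm g_\infty^2$, and $\norm{g_\chi}_\infty\le C\norm g_\infty=C\norm u_\infty$. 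The remaining factor is $g(t,t\xi+\sqrt{2t}z)-g_\chi(t,\xi)$. Subtracting and adding $g(t,t\xi)$, it is controlled by the two bounds used above together with \eqref{eq:g-average-pointwise} (resp.\ its $L^2_\xi$ analogue \eqref{eq:packet-L2xi}).
\begin{itemize}
\item In $L^\infty_\xi$ this gives $Ct^{-3/4}\norm{Lu}_2$, and multiplying by $\sqrt t$ yields \eqref{eq:Ecubic-inf}.
\item In $L^2_\xi$ this gives $Ct^{-1}\norm{Lu}_2$, and multiplying by $\sqrt t$ yields \eqref{eq:Ecubic-L2}.
\end{itemize}
The $\norm u_\infty^2$ factor comes from the difference-of-cubes coefficient.

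\textbf{The localized quintic error.} For the pointwise bound, use $\abs{\Psi_\xi}\le C$:
\[
 \abs{\mathcal E_{\cW}}\le C\norm{\cW}_1\norm u_\infty^5 ,
\]
which is \eqref{eq:EW-inf}. For the $L^2_\xi$ bound, apply Minkowski's inequality in $y$:
\[
 \norm{\mathcal E_{\cW}}_{L^2_\xi}\le\int\abs{\cW(y+\gamma t)}\abs u^5\,\norm{\chi(z(t,y,\cdot))}_{L^2_\xi}\dd y .
\]
Since $z=(y-\xi t)/\sqrt{2t}$, the substitution $\xi\mapsto z$ has $\dd\xi=\sqrt{2t}\,t^{-1}\dd z$. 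Therefore
\[
 \norm{\chi(z(t,y,\cdot))}_{L^2_\xi}=C\bigl(\sqrt{2t}/t\bigr)^{1/2}=Ct^{-1/4},
\]
which gives \eqref{eq:EW-L2}.
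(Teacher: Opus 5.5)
Your proposal is correct and follows the paper's proof essentially step for step. The matching steps are: the integration by parts producing $i\int u\,\overline{(i\partial_t+\partial_y^2)\Psi_\xi}$; the zero-mean subtraction of $g(t,t\xi)$ against $\overline{K_\chi}$; the Lipschitz bound for $\cN_3$ combined with \eqref{eq:g-average-pointwise} and \eqref{eq:packet-L2xi}; and Minkowski's inequality with $\norm{\Psi_\xi(t,y)}_{L^2_\xi}\leq Ct^{-1/4}$ for the quintic term. One small sign slip: the intermediate integrand should read $\overline{\partial_t\Psi_\xi-i\partial_y^2\Psi_\xi}$, equivalently $\overline{\partial_t\Psi_\xi}+i\,\overline{\partial_y^2\Psi_\xi}$, not $\overline{\partial_t\Psi_\xi+i\partial_y^2\Psi_\xi}$; your final expression for $\mathcal E_{\mathrm{lin}}$ is nonetheless correct.
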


\begin{proof}
Let $P=i\partial_t+\partial_y^2$. Differentiating \eqref{eq:Gamma-def}, integrating by parts in the linear term, and using
\[
 Pu=\beta\abs{u}^2u-\cW(y+\gamma t)\abs{u}^4u
\]
gives
\begin{align*}
 \partial_t\Gamma
 &=-i\beta\int\abs{u}^2u\overline{\Psi_\xi}\dd y
 +i\int\cW(y+\gamma t)\abs{u}^4u\overline{\Psi_\xi}\dd y
 +i\int u\overline{P\Psi_\xi}\dd y.
\end{align*}
We estimate the three terms separately.

For the linear packet error, use \eqref{eq:packet-residual-exact} and write the last integral in terms of $g$. The change of variables from \eqref{eq:Gamma-average} gives
\[
 \mathcal E_{\mathrm{lin}}(t,\xi)
 =\frac{i}{2\sqrt t}
 \int g(t,t\xi+\sqrt{2t}z)\overline{K_\chi(z)}\dd z.
\]
The zero-mean identity \eqref{eq:Kchi-zero-mean} allows us to subtract $g(t,t\xi)$:
\begin{equation}\label{eq:Elinear-cancel}
 \mathcal E_{\mathrm{lin}}(t,\xi)
 =\frac{i}{2\sqrt t}
 \int\left[g(t,t\xi+\sqrt{2t}z)-g(t,t\xi)\right]
 \overline{K_\chi(z)}\dd z.
\end{equation}
For the pointwise estimate, the fundamental theorem of calculus, \eqref{eq:g-L-relation}, and Cauchy--Schwarz give
\[
 \abs{g(t,t\xi+\sqrt{2t}z)-g(t,t\xi)}
 \leq C t^{-3/4}\abs z^{1/2}\norm{Lu(t)}_2.
\]
Inserting this into \eqref{eq:Elinear-cancel} proves \eqref{eq:Elinear-inf}, since $K_\chi$ is compactly supported and smooth. For the $L^2_\xi$ estimate, translation in the physical variable gives
\begin{align*}
 \norm{g(t,t\xi+\sqrt{2t}z)-g(t,t\xi)}_{L^2_\xi} &=t^{-1/2}\norm{g(t,\cdot+\sqrt{2t}z)-g(t,\cdot)}_{L^2_y}\\
 &\leq t^{-1/2}\sqrt{2t}\abs z\norm{\partial_yg(t)}_2
 \leq Ct^{-1}\abs z\norm{Lu(t)}_2.
\end{align*}
Multiplication by the prefactor $t^{-1/2}$ in \eqref{eq:Elinear-cancel} yields \eqref{eq:Elinear-L2}.

For the cubic term, the oscillatory phases cancel, so
\begin{equation}\label{eq:cubic-packet-average}
 \int\abs{u}^2u\overline{\Psi_\xi}\dd y
 =\sqrt t\int \cN_3\left(g(t,t\xi+\sqrt{2t}z)\right)\chi(z)\dd z.
\end{equation}
The leading term is
\[
 \sqrt t\,\cN_3(g_\chi)=t^{-1}\abs{\Gamma}^2\Gamma.
\]
On the support of $\chi$, the triangle inequality, the fundamental theorem
of calculus, and \eqref{eq:g-average-pointwise} give
\begin{equation*}
 \begin{split}
 \abs{g(t,t\xi+\sqrt{2t}z)-g_\chi(t,\xi)}
 \leq\abs{g(t,t\xi+\sqrt{2t}z)-g(t,t\xi)}+\abs{g(t,t\xi)-g_\chi(t,\xi)}
 \leq Ct^{-3/4}\norm{Lu(t)}_2.
 \end{split}
\end{equation*}
Moreover, \eqref{eq:gchi-average} gives
$\abs{g_\chi(t,\xi)}\leq\norm{\chi}_1\norm{u(t)}_\infty$.
Since
\[
 \abs{\cN_3(\mathfrak z_1)-\cN_3(\mathfrak z_2)}
 \leq C\left(\abs{\mathfrak z_1}^2+\abs{\mathfrak z_2}^2\right)
 \abs{\mathfrak z_1-\mathfrak z_2},
\]
and $\abs{g}=\abs{u}$, the difference between
\eqref{eq:cubic-packet-average} and $\sqrt t\,\cN_3(g_\chi)$ is bounded
pointwise by
\[
 C\sqrt t\,\norm{u(t)}_\infty^2
 t^{-3/4}\norm{Lu(t)}_2
 =Ct^{-1/4}\norm{u(t)}_\infty^2\norm{Lu(t)}_2.
\]
After multiplication by $\abs\beta$, this proves \eqref{eq:Ecubic-inf}. For
the $L^2_\xi$ estimate, the same decomposition and the argument used for
\eqref{eq:packet-L2xi} yield, uniformly for $z\in\supp\chi$,
\begin{equation*}
 \norm{g(t,t\xi+\sqrt{2t}z)-g_\chi(t,\xi)}_{L^2_\xi}
 \leq Ct^{-1}\norm{Lu(t)}_2.
\end{equation*}
Hence Minkowski's inequality and the same Lipschitz estimate for $\cN_3$ give
\[
 \sqrt t\,\norm{u(t)}_\infty^2
 \sup_{z\in\supp\chi}
 \norm{g(t,t\xi+\sqrt{2t}z)-g_\chi(t,\xi)}_{L^2_\xi}
 \leq Ct^{-1/2}\norm{u(t)}_\infty^2\norm{Lu(t)}_2,
\]
which proves \eqref{eq:Ecubic-L2}.

Finally,
\[
 \mathcal E_{\cW}(t,\xi)
 =i\int\cW(y+\gamma t)\abs{u(t,y)}^4u(t,y)
 \overline{\Psi_\xi(t,y)}\dd y.
\]
Since $\norm{\Psi_\xi}_\infty\leq C$,
\[
 \abs{\mathcal E_{\cW}(t,\xi)}
 \leq C\norm{u(t)}_\infty^5
 \int_\R\abs{\cW(y+\gamma t)}\dd y
 =C\norm{\cW}_1\norm{u(t)}_\infty^5,
\]
which is \eqref{eq:EW-inf}. For fixed $y$, the condition $z=(y-\xi t)/\sqrt{2t}\in\supp\chi$ restricts $\xi$ to an interval of length $O(t^{-1/2})$. Since the packet is uniformly bounded,
\begin{equation*}
 \norm{\Psi_\xi(t,y)}_{L^2_\xi}\leq Ct^{-1/4}.
\end{equation*}
Minkowski's inequality then yields
\begin{align*}
 \norm{\mathcal E_{\cW}(t)}_{L^2_\xi}
 &\leq\int_\R\abs{\cW(y+\gamma t)}\abs{u(t,y)}^5
 \norm{\Psi_\xi(t,y)}_{L^2_\xi}\dd y\\
 &\leq Ct^{-1/4}\norm{\cW}_1\norm{u(t)}_\infty^5,
\end{align*}
which proves \eqref{eq:EW-L2}. This estimate uses spatial integrability of $\cW$ and is uniform in the packet velocity, including $\xi=-\gamma$.
\end{proof}

Under the bootstrap assumption \eqref{eq:bootstrap-Linf} and the energy estimate \eqref{eq:energy-growth-packet}, the remainder bounds become
\begin{align}
 \norm{\mathcal E_\Gamma(t)}_{L^\infty_\xi}
 &\leq C\varepsilon\left(1+\abs\beta D_{\mathrm{boot}}^2\varepsilon^2\right)
 t^{-5/4+\delta_E}+C_{\cW}D_{\mathrm{boot}}^5\varepsilon^5t^{-5/2},
 \label{eq:EGamma-bootstrap-inf}\\
 \norm{\mathcal E_\Gamma(t)}_{L^2_\xi}
 &\leq C\varepsilon\left(1+\abs\beta D_{\mathrm{boot}}^2\varepsilon^2\right)
 t^{-3/2+\delta_E}+C_{\cW}D_{\mathrm{boot}}^5\varepsilon^5t^{-11/4}.
 \label{eq:EGamma-bootstrap-L2}
\end{align}
The cubic and linear packet errors have the same time exponent. The localized quintic error decays faster.

\subsection{Global decay and convergence of the modified profile}

\begin{theorem}\label{thm:forward-scattering}
Let $\beta,\gamma\in\R$, and let $\cW$ be real-valued and satisfy
\eqref{eq:W-assumption}. There is
$\varepsilon_0=\varepsilon_0(\beta,\cW)>0$, independent of $\gamma$, such that if
\begin{equation*}
 u(1)=u_1\in\Sigma,
 \qquad
 \norm{u_1}_\Sigma\leq\varepsilon\leq\varepsilon_0,
\end{equation*}
then \eqref{eq:comoving} has a unique global solution
\[
 u\in C\bigl([1,\infty);\Sigma\bigr)\cap C^1\bigl([1,\infty);H^{-1}(\R)\bigr).
\]
There is a fixed constant $C_\delta>0$ such that
$\delta=C_\delta\abs\beta\varepsilon^2<1/16$ and
\begin{equation}\label{eq:global-decay}
 \norm{u(t)}_{L^\infty_y}\leq C\varepsilon t^{-1/2},
 \qquad
 \norm{u(t)}_{H^1_y}+\norm{Lu(t)}_{L^2_y}
 \leq C\varepsilon t^\delta.
\end{equation}
All estimates in this theorem hold for every $t\geq1$.
There is a unique profile $A\in L^2\cap L^\infty$ satisfying
\begin{equation}\label{eq:A-size}
 \norm A_2+\norm A_\infty\leq C\varepsilon
\end{equation}
and, more sharply,
\begin{equation}\label{eq:A-reg}
 A\in H^\varsigma(\R)\qquad\text{for every }\varsigma<1-2\delta.
\end{equation}
The packet coefficient satisfies
\begin{align}
 \norm{\Gamma(t)-A e^{-i\beta\abs{A}^2\log t}}_{L^\infty_\xi}
 &\leq C\varepsilon t^{-1/4+2\delta},
 \label{eq:Gamma-asymptotic-inf}\\
 \norm{\Gamma(t)-A e^{-i\beta\abs{A}^2\log t}}_{L^2_\xi}
 &\leq C\varepsilon t^{-1/2+2\delta}.
 \label{eq:Gamma-asymptotic-L2}
\end{align}
Consequently,
\begin{align}
 &\norm{u(t,y)-t^{-1/2}e^{iy^2/(4t)}A(y/t)
 e^{-i\beta\abs{A(y/t)}^2\log t}}_{L^\infty_y}
 \leq C\varepsilon t^{-3/4+2\delta},
 \notag\\
 &\norm{u(t,y)-t^{-1/2}e^{iy^2/(4t)}A(y/t)
 e^{-i\beta\abs{A(y/t)}^2\log t}}_{L^2_y}
 \leq C\varepsilon t^{-1/2+2\delta}.
 \label{eq:asymptotic-L2}
\end{align}
The global bounds and the two physical-space asymptotic estimates also hold
for $v(t,x)=u(t,x-\gamma t)$, with $y$ replaced by $x-\gamma t$. The
vector-field bound in the original variables uses
$L_\gamma=x-\gamma t+2it\partial_x$.
\end{theorem}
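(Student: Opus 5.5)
The plan is a bootstrap on the dispersive bound \eqref{eq:bootstrap-Linf}, closed through the packet ODE of \autoref{lem:Gamma-ODE}, in the wave-packet framework of Ifrim--Tataru.

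\textbf{Setup and bootstrap.} By \autoref{prop:local-theory}, the data $u_1\in\Sigma$ generate a maximal $\Sigma$ solution. By density and continuous dependence we may assume the solution is smooth, so that \autoref{lem:energy-packet-section} and \autoref{lem:Gamma-ODE} apply. Fix $D_{\mathrm{boot}}$ to be a large universal multiple of $\norm{\chi}_1$ and the Sobolev constants. Let $T$ be maximal such that \eqref{eq:bootstrap-Linf} holds on $[1,T]$; this holds near $t=1$ because $\norm{u}_\infty\lesssim\norm u_{H^1}\le\varepsilon$. For $\varepsilon$ small, \eqref{eq:energy-lemma-smallness} holds, so \eqref{eq:energy-growth-packet} gives $H^1$ and $Lu$ growth at most $t^{\delta_E}$. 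We then set $\delta=\delta_E$ for the choice $C_\delta=C_ED_{\mathrm{boot}}^2$, and take $\varepsilon_0$ small enough that $\delta<1/16$.

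\textbf{Closing the bootstrap.} This is the key step. From \eqref{eq:Gamma-ODE},
\[
 \partial_t\abs{\Gamma}^2=2\Re(\overline\Gamma\,\mathcal E_\Gamma),
\]
because the cubic term is purely a phase rotation. Hence
\[
 \abs{\Gamma(t,\xi)}\le\abs{\Gamma(1,\xi)}+\int_1^t\norm{\mathcal E_\Gamma(s)}_\infty\dd s.
\]
By \eqref{eq:EGamma-bootstrap-inf}, the integrand is bounded by $C\varepsilon s^{-5/4+\delta}+C_\cW D_{\mathrm{boot}}^5\varepsilon^5s^{-5/2}$, which is integrable with integral at most $C\varepsilon$, where $C$ is independent of $D_{\mathrm{boot}}$ once $\varepsilon$ is small relative to $D_{\mathrm{boot}}$. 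We also have $\abs{\Gamma(1)}\le C\norm{u_1}_\infty\le C\varepsilon$. Then \eqref{eq:packet-pointwise} yields
\[
 \norm{u(t)}_\infty\le t^{-1/2}\norm{\Gamma(t)}_\infty+Ct^{-3/4}\norm{Lu}_2\le C_1\varepsilon t^{-1/2}+C\varepsilon t^{-3/4+\delta}.
\]
Choosing $D_{\mathrm{boot}}=4C_1$ improves the bootstrap constant to $D_{\mathrm{boot}}/2$, so by continuity $T=\infty$. The energy bound then prevents $H^1$ blow-up, and \autoref{prop:local-theory} gives the global solution, with uniqueness inherited from the local theory. \eqref{eq:global-decay} follows. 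The point I expect to be most delicate is making sure the constants in \eqref{eq:EGamma-bootstrap-inf} do not depend on $D_{\mathrm{boot}}$ at leading order. The factor $\abs\beta D_{\mathrm{boot}}^2\varepsilon^2$ there is small, so this works.

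\textbf{Profile.} Define the renormalized coefficient
\[
 \widetilde\Gamma=\Gamma\exp\Big(i\beta\int_1^t s^{-1}\abs{\Gamma(s)}^2\dd s\Big).
\]
Then $\partial_t\widetilde\Gamma$ equals $\mathcal E_\Gamma$ times a unimodular factor. Moreover,
\[
 \partial_t\abs\Gamma^2=O\big(\varepsilon^2 t^{-5/4+\delta}\big)
\]
pointwise and $O(\varepsilon^2 t^{-3/2+\delta})$ in $L^2$, so $\abs{\Gamma(t)}^2$ converges, to $\abs A^2$ say, at rate $t^{-1/4+\delta}$. Writing the phase as $\abs{A}^2\log t+\phi_\infty+O(t^{-1/4+\delta}\log t)$, we conclude that $\widetilde\Gamma$ converges in $L^\infty\cap L^2$. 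We set $A=\lim\widetilde\Gamma\, e^{-i\beta\phi_\infty}$, absorbing the constant phase, which is consistent with $\abs A$. The estimates \eqref{eq:Gamma-asymptotic-inf} and \eqref{eq:Gamma-asymptotic-L2} follow from the tail integrals of \eqref{eq:EGamma-bootstrap-inf} and \eqref{eq:EGamma-bootstrap-L2}, with the log losses absorbed into $t^{2\delta}$. The $L^2$ bound uses
\[
 \norm{\Gamma}_\infty\norm{\mathcal E_\Gamma}_2\int_t^\infty s^{-1}\dd s,
\]
together with the fact that the phase error is controlled by $\int_t^\infty s^{-1}\big|\abs{\Gamma}^2-\abs A^2\big|\dd s$. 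The bound \eqref{eq:A-size} follows from \eqref{eq:Gamma-L2}.

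\textbf{Regularity and physical asymptotics.} For \eqref{eq:A-reg}, we interpolate. By \eqref{eq:Gamma-derivative}, $\norm{\partial_\xi\Gamma(t)}_2\lesssim\varepsilon t^\delta$, while $\partial_\xi$ of the phase factor costs at most $\log t\cdot\norm{\partial_\xi\Gamma}_2$. The approximants $A_t=\Gamma(t)e^{i\beta\abs{\Gamma}^2\log t}$ (up to the constant phase) therefore satisfy $\norm{A_t}_{H^1}\lesssim \varepsilon t^{2\delta}$ and $\norm{A_t-A}_{L^2}\lesssim\varepsilon t^{-1/2+2\delta}$. A Littlewood--Paley dyadic choice $t\sim 2^{2k}$ then places $A$ in $H^\varsigma$ for $\varsigma<1-2\delta$, up to adjusting constants in $\delta$. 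Uniqueness of $A$ follows from \eqref{eq:asymptotic-L2}, as in the injectivity argument of \autoref{cor:wave-operator-properties}. The physical estimates combine \eqref{eq:packet-pointwise} and \eqref{eq:packet-L2xi} with \eqref{eq:Gamma-asymptotic-inf} and \eqref{eq:Gamma-asymptotic-L2}, using the change of variables $y=t\xi$, which contributes a factor $t^{1/2}$ in $L^2$. Finally, translating by $\gamma t$ gives the statements for $v$ and $L_\gamma$.
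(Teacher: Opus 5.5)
Your proposal is correct. Apart from one step, it is the paper's proof. The bootstrap is the same: you close \eqref{eq:bootstrap-Linf} through \autoref{lem:energy-packet-section}, the identity $\partial_t\abs{\Gamma}^2=2\Re(\overline\Gamma\mathcal E_\Gamma)$ and \eqref{eq:packet-pointwise}, with $D_{\mathrm{boot}}$ fixed before $\varepsilon_0$. The Fourier-truncation interpolation for \eqref{eq:A-reg} and the reverse-triangle-inequality uniqueness argument also match the paper.

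The real difference is the integrating factor used to extract $A$. The paper uses $Z=\Gamma e^{i\beta\abs{\Gamma}^2\log t}$. Its derivative contains the extra term $2i\beta\log t\,\Re(\overline\Gamma\mathcal E_\Gamma)\Gamma$, and the resulting factor $(1+\abs\beta\varepsilon^2\log t)$ is absorbed into a second $t^{\delta}$. That is the source of the $2\delta$ in \eqref{eq:Gamma-asymptotic-inf}--\eqref{eq:Gamma-asymptotic-L2}.

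Your integrated phase makes $\partial_t\widetilde\Gamma$ exactly a unimodular multiple of $\mathcal E_\Gamma$. The price is a second step: you must show $\abs{\Gamma(s)}^2$ converges at an $s^{-1}$-integrable rate in order to define $\phi_\infty$. Carried out, this gives those two estimates with $\delta$ in place of $2\delta$, so your route is slightly sharper than the paper's.

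There are two bookkeeping points.
\begin{enumerate}
\item The quantity $\norm{\Gamma}_\infty\norm{\mathcal E_\Gamma}_2\int_t^\infty s^{-1}\dd s$ diverges as written. It should be the nested integral $\int_t^\infty s^{-1}\int_s^\infty\norm{\Gamma}_\infty\norm{\mathcal E_\Gamma(\sigma)}_2\dd\sigma\dd s$, which is bounded by a constant times $\varepsilon^2t^{-1/2+\delta}$.
\item Interpolating with the exponents you state, $t^{2\delta}$ for the $H^1$ growth and $t^{-1/2+2\delta}$ for the $L^2$ convergence, yields only $\varsigma<1-4\delta$. To reach $1-2\delta$ without changing $C_\delta$, use the bounds your own construction provides: $\norm{A_t}_{H^1}\lesssim\varepsilon t^{\delta}(1+\abs\beta\varepsilon^2\log t)$ and $\norm{A_t-A}_2\lesssim\varepsilon t^{-1/2+\delta}(1+\abs\beta\varepsilon^2\log t)$. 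Then let $\theta_0\downarrow\delta$ and $\theta_1\uparrow\frac12-\delta$, as the paper does; the logarithms cost only arbitrarily small powers of $t$.
\end{enumerate}
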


\begin{proof}
By \autoref{prop:local-theory}, the datum $u_1\in\Sigma$ determines a
unique maximal solution
\[
 u\in C\bigl([1,T_+);\Sigma\bigr)
 \cap C^1\bigl([1,T_+);H^{-1}(\R)\bigr),
\]
and a finite $T_+$ can occur only if
$\norm{u(t)}_{H^1}\to\infty$ along a sequence with $t\uparrow T_+$.
The a priori estimates below are first proved for smooth approximations
and then passed to this solution by the continuous dependence supplied
by \autoref{prop:local-theory}. By the Sobolev embedding at time $t=1$,
\[
 \norm{u(1)}_\infty\leq C\varepsilon.
\]
Choose a constant $C_{\mathrm{boot}}$, independent of $D_{\mathrm{boot}}$, that dominates
the constants in the Sobolev, energy, packet comparison, and time-integration
estimates above. Fix $D_{\mathrm{boot}}\geq4C_{\mathrm{boot}}$. Decrease
$\varepsilon_0$ so that the smallness requirements in
\autoref{lem:energy-packet-section} hold and
\begin{equation}\label{eq:bootstrap-smallness-explicit}
 \abs\beta D_{\mathrm{boot}}^2\varepsilon_0^2
 +C_{\cW}D_{\mathrm{boot}}^5\varepsilon_0^4\leq1.
\end{equation}
We choose $C_{\cW}$ large enough that
\eqref{eq:bootstrap-smallness-explicit} implies
\eqref{eq:energy-lemma-smallness}.
Fix $C_\delta\geq C_ED_{\mathrm{boot}}^2+1$ and set
\[
 \delta=C_\delta\abs\beta\varepsilon^2,
 \qquad
 \delta_E=C_E\abs\beta D_{\mathrm{boot}}^2\varepsilon^2\leq\delta.
\]
Decrease $\varepsilon_0$ once more so that $\delta<1/16$. Define
\[
 T_{\mathrm{boot}}
 :=\sup\left\{T\in(1,T_+):
 \norm{u(t)}_\infty\leq D_{\mathrm{boot}}\varepsilon t^{-1/2}
 \text{ for every }t\in[1,T]\right\}.
\]
Local well-posedness and continuity imply $T_{\mathrm{boot}}>1$. On every
compact subinterval of $[1,T_{\mathrm{boot}})$,
\autoref{lem:energy-packet-section} and $\delta_E\leq\delta$ give the
second estimate in \eqref{eq:global-decay}. From \eqref{eq:Gamma-ODE},
\[
 \partial_t\abs{\Gamma(t,\xi)}
 \leq\abs{\mathcal E_\Gamma(t,\xi)},
\]
because the resonant cubic coefficient in \eqref{eq:Gamma-ODE} is purely
imaginary. Integrating \eqref{eq:EGamma-bootstrap-inf} gives
\begin{equation}\label{eq:Gamma-uniform}
 \norm{\Gamma(t)}_\infty\leq C_{\mathrm{boot}}\varepsilon.
\end{equation}
The packet comparison estimate \eqref{eq:packet-pointwise} therefore yields
\[
 \norm{u(t)}_\infty
 \leq Ct^{-1/2}\norm{\Gamma(t)}_\infty
 +Ct^{-3/4}\norm{Lu(t)}_2
 \leq C_{\mathrm{boot}}\varepsilon t^{-1/2}
 +C_{\mathrm{boot}}\varepsilon t^{-3/4+\delta}.
\]
Since $\delta<1/4$, the second term is at most
$C_{\mathrm{boot}}\varepsilon t^{-1/2}$. Hence
\[
 \norm{u(t)}_\infty\leq 2C_{\mathrm{boot}}\varepsilon t^{-1/2}
 \leq \frac {D_{\mathrm{boot}}}{2}\varepsilon t^{-1/2},
 \qquad 1\leq t<T_{\mathrm{boot}},
\]
a strict improvement of the bootstrap assumption. Continuity gives
$T_{\mathrm{boot}}=T_+$. If $T_+<\infty$, then
\autoref{lem:energy-packet-section} gives
$\sup_{1\leq t<T_+}\norm{u(t)}_{H^1}\leq C\varepsilon T_+^\delta$.
This contradicts \eqref{eq:H1-blowup-alternative}. Hence $T_+=\infty$,
and \eqref{eq:global-decay} holds for every $t\geq1$.

It remains to identify the asymptotic profile. Set
\begin{equation}\label{eq:modified-packet-profile}
 Z(t,\xi)=\Gamma(t,\xi)
 \exp\left(i\beta\abs{\Gamma(t,\xi)}^2\log t\right).
\end{equation}
Since
\[
 \partial_t\abs{\Gamma}^2
 =2\Re\left(\overline\Gamma\mathcal E_\Gamma\right),
\]
a direct differentiation of \eqref{eq:modified-packet-profile} gives
\begin{equation*}
 \partial_tZ
 =e^{i\beta\abs{\Gamma}^2\log t}
 \left[
 \mathcal E_\Gamma
 +2i\beta\log t\,\Re(\overline\Gamma\mathcal E_\Gamma)\Gamma
 \right].
\end{equation*}
By \eqref{eq:Gamma-uniform},
\begin{equation*}
 \norm{\partial_tZ(t)}_{L^q_\xi}
 \leq C\left(1+\abs\beta\varepsilon^2\log t\right)
 \norm{\mathcal E_\Gamma(t)}_{L^q_\xi},
 \qquad q\in\{2,\infty\}.
\end{equation*}
Put $\mu_{\mathrm{lr}}=\abs\beta\varepsilon^2$. The right-hand side is integrable
by \eqref{eq:EGamma-bootstrap-inf} and \eqref{eq:EGamma-bootstrap-L2}. Hence $Z(t)$
is Cauchy in both $L^2_\xi$ and $L^\infty_\xi$. On every bounded
$\xi$-interval, the two limits agree almost everywhere. Denote the common limit by
$A\in L^2\cap L^\infty$. Integrating from $t$ to infinity and keeping the
logarithmic factor gives
\begin{align}
 \norm{Z(t)-A}_\infty
 &\leq C\varepsilon(1+\mu_{\mathrm{lr}}\log t)t^{-1/4+\delta}
 +C\varepsilon^5(1+\mu_{\mathrm{lr}}\log t)t^{-3/2},
 \label{eq:Z-conv-inf}\\
 \norm{Z(t)-A}_2
 &\leq C\varepsilon(1+\mu_{\mathrm{lr}}\log t)t^{-1/2+\delta}
 +C\varepsilon^5(1+\mu_{\mathrm{lr}}\log t)t^{-7/4}.
 \label{eq:Z-conv-L2}
\end{align}
The estimates \eqref{eq:A-size} follow from \eqref{eq:Gamma-L2} and \eqref{eq:Gamma-uniform}.

Because $\abs{Z}=\abs{\Gamma}$, \eqref{eq:Z-conv-inf} and \eqref{eq:Z-conv-L2} also control $\abs{\Gamma}^2-\abs{A}^2$. Using $\abs{e^{i\varphi}-1}\leq\abs{\varphi}$, one obtains
\begin{align*}
 &\norm{\Gamma(t)-A e^{-i\beta\abs{A}^2\log t}}_{L^q_\xi}\leq\norm{Z(t)-A}_{L^q_\xi}
 +C\abs\beta\log t\,\norm A_\infty
 \norm{\abs{\Gamma}^2-\abs{A}^2}_{L^q_\xi},
 \qquad q\in\{2,\infty\}.
\end{align*}
The last expression is bounded by
$C(1+\mu_{\mathrm{lr}}\log t)\norm{Z(t)-A}_{L^q_\xi}$. The choice of
$C_\delta$ gives
$(1+\mu_{\mathrm{lr}}\log t)^2\leq C t^\delta$. When $\beta=0$, both
$\mu_{\mathrm{lr}}$ and
$\delta$ vanish. Estimates \eqref{eq:Z-conv-inf}--
\eqref{eq:Z-conv-L2} therefore yield
\eqref{eq:Gamma-asymptotic-inf}--\eqref{eq:Gamma-asymptotic-L2} with the
stated $2\delta$ loss.

The pointwise physical-space estimate follows from \eqref{eq:packet-pointwise}
and \eqref{eq:Gamma-asymptotic-inf}. For every $f\in L^2(\R)$,
\[
 \norm{t^{-1/2}f(y/t)}_{L^2_y}
 =\norm{f}_{L^2_\xi}.
\]
Furthermore \eqref{eq:packet-L2xi} becomes, after the change of variables $y=t\xi$,
\begin{equation}\label{eq:packet-L2y}
 \norm{u(t,y)-t^{-1/2}e^{iy^2/(4t)}\Gamma(t,y/t)}_{L^2_y}
 \leq Ct^{-1/2}\norm{Lu(t)}_2.
\end{equation}
Combining \eqref{eq:packet-L2y} and \eqref{eq:Gamma-asymptotic-L2} proves \eqref{eq:asymptotic-L2}.

We finish with the regularity of $A$. From \eqref{eq:Gamma-derivative} and \eqref{eq:global-decay},
\begin{equation}\label{eq:Gamma-H1-growth}
 \norm{\partial_\xi\Gamma(t)}_2\leq C\varepsilon t^\delta.
\end{equation}
Differentiate \eqref{eq:modified-packet-profile} in $\xi$. Using \eqref{eq:Gamma-uniform} and \eqref{eq:Gamma-H1-growth},
\begin{equation*}
 \norm{Z(t)}_{H^1_\xi}
 \leq C\varepsilon\left(1+\abs\beta\varepsilon^2\log t\right)t^\delta.
\end{equation*}
For the following interpolation argument, the Fourier transform in $\xi$ is
\[
 \widehat h(\xi^\ast)=\int_\R e^{-i\xi^\ast\xi}h(\xi)\dd\xi.
\]
Suppose $f(t)\to f_\infty$ in $L^2$ and
\[
 \norm{f(t)-f_\infty}_2\leq Ct^{-\theta_1},
 \qquad
 \norm{f(t)}_{H^1}\leq Ct^{\theta_0}.
\]
For $R_0\geq1$, Fourier truncation gives
\[
 \norm{\mathbf 1_{\{\abs{\xi^\ast}>R_0\}}\widehat{f_\infty}}_2
 \leq Ct^{-\theta_1}+CR_0^{-1}t^{\theta_0}.
\]
Choosing $t=R_0^{1/(\theta_0+\theta_1)}$ shows
\begin{equation}\label{eq:frequency-tail-interpolation}
 \norm{\mathbf 1_{\{\abs{\xi^\ast}>R_0\}}\widehat{f_\infty}}_2
 \leq CR_0^{-\theta_1/(\theta_0+\theta_1)}.
\end{equation}
Decomposing frequency space into the annuli
$2^j\leq\abs{\xi^\ast}<2^{j+1}$ and using
\eqref{eq:frequency-tail-interpolation}, we obtain
\[
 \norm{\widehat{f_\infty}}_{L^2(2^j\leq\abs{\xi^\ast}<2^{j+1})}
 \leq C2^{-j\theta_1/(\theta_0+\theta_1)}.
\]
Therefore
\[
 \sum_{j\geq0}2^{2j\varsigma}
 \norm{\widehat{f_\infty}}_{L^2(2^j\leq\abs{\xi^\ast}<2^{j+1})}^2<\infty
\]
whenever $\varsigma<\theta_1/(\theta_0+\theta_1)$. Hence
$f_\infty\in H^\varsigma$ for every $ \varsigma<\frac{\theta_1}{\theta_0+\theta_1}$.
If $\beta\neq0$, the preceding $H^1$ bound is $O(t^{\theta_0})$ for every
$\theta_0>\delta$. If $\beta=0$, the same conclusion holds for every
$\theta_0>0$. The unabsorbed time integral permits every
$\theta_1<1/2-\delta$ in the $L^2$ convergence estimate. Letting
$\theta_0\downarrow\delta$ and $\theta_1\uparrow1/2-\delta$ gives
\[
 \sup_{\substack{\theta_0>\delta\\0<\theta_1<1/2-\delta}}
 \frac{\theta_1}{\theta_0+\theta_1}=1-2\delta,
\]
which proves \eqref{eq:A-reg}.

Finally, the profile is unique and therefore independent of the packet
cutoff $\chi$. Suppose $B\in L^2\cap L^\infty$ gives the same
$L^2_y$ modified asymptotic. After the self-similar change of variables,
\[
 \norm{Ae^{-i\beta\abs{A}^2\log t}
 -Be^{-i\beta\abs{B}^2\log t}}_2\longrightarrow0.
\]
The reverse triangle inequality gives $\abs{A}=\abs{B}$ almost
everywhere. The two logarithmic phases then coincide, and the same limit
reduces to $\norm{A-B}_2=0$.
\end{proof}

Uniqueness of the modified scattering profile defines a forward scattering
map on the small $\Sigma$-ball.

\begin{corollary}\label{cor:scattering-map}
After decreasing $\varepsilon_0$ if necessary, the assignment
\[
 S_+:\{u_1\in\Sigma:\norm{u_1}_\Sigma<\varepsilon_0\}
 \longrightarrow L^2(\R)\cap L^\infty(\R),
 \qquad S_+(u_1)=A[u_1],
\]
is well defined, continuous, injective, and gauge equivariant:
$S_+(e^{i\vartheta}u_1)=e^{i\vartheta}S_+(u_1)$ for
every $\vartheta\in\R$. Here $A[u_1]$ is the unique profile furnished by
\autoref{thm:forward-scattering}, and the target is equipped with the sum norm
$\norm{h}_2+\norm{h}_\infty$.
\end{corollary}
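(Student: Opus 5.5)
Well-definedness is immediate from \autoref{thm:forward-scattering}. For $\norm{u_1}_\Sigma<\varepsilon_0$ that theorem gives a unique global solution and a unique profile $A\in L^2\cap L^\infty$. The profile is characterized intrinsically by the $L^2_y$ modified asymptotic \eqref{eq:asymptotic-L2}, so it does not depend on the cutoff $\chi$. Gauge equivariance follows from uniqueness. Since both nonlinear coefficients are real, $e^{i\vartheta}u$ solves \eqref{eq:comoving} with datum $e^{i\vartheta}u_1$. Moreover, $\abs{e^{i\vartheta}A}=\abs A$, so $e^{i\vartheta}u$ has modified asymptotic $t^{-1/2}e^{iy^2/(4t)}e^{i\vartheta}A(y/t)e^{-i\beta\abs{A}^2\log t}$. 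The uniqueness argument at the end of the proof of \autoref{thm:forward-scattering} then gives $S_+(e^{i\vartheta}u_1)=e^{i\vartheta}S_+(u_1)$.

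For continuity, I would use a uniform-in-time approximation argument. Let $u_1^{(n)}\to u_1$ in $\Sigma$, with all data in the $\varepsilon_0$-ball. Write $Z^{(n)}$ and $Z$ for the modified packet profiles \eqref{eq:modified-packet-profile}. The constants in \eqref{eq:Z-conv-inf}--\eqref{eq:Z-conv-L2} depend only on $\varepsilon_0,\beta,\cW$. Hence for $q\in\{2,\infty\}$,
\[
\norm{A^{(n)}-A}_{L^q}\leq \norm{Z^{(n)}(t)-Z(t)}_{L^q}+C\varepsilon_0\bigl(1+\mu_{\mathrm{lr}}\log t\bigr)t^{-1/4+\delta},
\]
uniformly in $n$. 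Given $\kappa>0$, first fix $t$ large so that the tail term is below $\kappa$. At this fixed finite time, \autoref{prop:local-theory}(ii) gives $u^{(n)}(t)\to u(t)$ in $\Sigma$. Then $\Gamma^{(n)}(t)\to\Gamma(t)$ in $L^2_\xi$ by \eqref{eq:Gamma-L2}, and in $L^\infty_\xi$ by the first bound in \autoref{lem:packet-comparison} together with Sobolev embedding. Since $\Gamma$ is uniformly bounded by \eqref{eq:Gamma-uniform}, the map $\Gamma\mapsto\Gamma e^{i\beta\abs\Gamma^2\log t}$ is Lipschitz in $L^2\cap L^\infty$ for fixed $t$. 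Therefore $Z^{(n)}(t)\to Z(t)$, and continuity follows. A density argument is needed here, because the a priori estimates were proved for smooth approximations; this is handled by the same continuous dependence already used in the proof of \autoref{thm:forward-scattering}.

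Injectivity is the step I expect to be the main obstacle. The natural route is a final-state uniqueness argument. Suppose $S_+(u_1)=S_+(u_2)=A$. Then both solutions satisfy $\norm{u_j(t)-u_A(t)}_{L^2}\lesssim\varepsilon t^{-1/2+2\delta}$. I would set $d=u_1-u_2$ and run an $L^2$ energy estimate backward from infinity, using $d(t)\to0$ in $L^2$. The cubic linearization coefficient has size $\abs\beta\norm{u_j}_\infty^2\lesssim\abs\beta\varepsilon^2t^{-1}$, and the quintic part is integrable.

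To close the argument, I would use the weighted norm $\sup_t t^{\theta}\norm{d(t)}_2$ with $\theta=1/2-2\delta$. The backward Gronwall estimate gives
\[
\norm{d(t)}_2\leq\int_t^\infty C\abs\beta\varepsilon^2s^{-1}\norm{d(s)}_2\dd s+\text{(integrable)}.
\]
The cubic contribution is therefore bounded by $C\abs\beta\varepsilon^2\theta^{-1}\sup_s s^\theta\norm{d(s)}_2$, which is a contraction after shrinking $\varepsilon_0$. This mirrors the proof of \autoref{lem:tail-uniqueness}, but it is carried out in $L^2$ rather than $H^1$, and it requires no regularity of $A$. The quintic term is estimated by $\norm{\cW}_\infty\varepsilon^4s^{-2}\norm{d}_2$, which is integrable. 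This yields $d\equiv0$ on a tail, and backward $H^1$ uniqueness gives $u_1=u_2$.
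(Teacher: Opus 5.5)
Your proposal is correct and follows the paper's proof essentially step by step. Well-definedness and gauge equivariance come from uniqueness of the profile; continuity comes from first fixing a time at which the uniform tail bounds for $Z$ are small and then using finite-time continuous dependence in $\Sigma$; injectivity comes from the backward $L^2$ Duhamel estimate in the weighted norm $\sup_{s\geq T}s^{\sigma}\norm{d(s)}_2$, whose cubic coefficient $C\abs\beta\varepsilon_0^2/\sigma$ is made small by shrinking the data radius. The only cosmetic adjustment is to define the weight exponent as $\sigma=\frac12-2\delta_0$ with $\delta_0=C_\delta\abs\beta\varepsilon_0^2$, so that it serves both solutions simultaneously, exactly as the paper does.
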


\begin{proof}
Well-definedness and gauge equivariance follow from
\autoref{thm:forward-scattering} and uniqueness of the profile. For continuity,
restrict the data to a smaller $\Sigma$-ball on which all tail estimates are
uniform. Given $u_{1,n}\to u_1$ in $\Sigma$, finite-time continuous
dependence gives corresponding solutions $u_n$ satisfying
$u_n\to u$ in $C([1,T];\Sigma)$ for every fixed $T$. Denote their packet
coefficients, modified packet profiles, and limits by $\Gamma_n$, $Z_n$,
and $A_n$, respectively. Then $\Gamma_n(T)\to\Gamma(T)$ and
$Z_n(T)\to Z(T)$ in
both $L^2$ and $L^\infty$ for each fixed $T$. First choose $T$ so that the
uniform tails
\[
 \norm{Z_n(T)-A_n}_2+\norm{Z_n(T)-A_n}_\infty
 \quad\text{and}\quad
 \norm{Z(T)-A}_2+\norm{Z(T)-A}_\infty
\]
are small, and then let $n\to\infty$.

For injectivity, put
\[
 \delta_0=C_\delta\abs\beta\varepsilon_0^2,
 \qquad \sigma_{\mathrm d}=\frac12-2\delta_0>0.
\]
Let two solutions have the same profile and write their difference as
$d$. Their uniform $L^2$ asymptotics give
$\norm{d(t)}_2\leq Ct^{-\sigma_{\mathrm d}}$. Subtract the equations and
integrate from $t$ to $T_{\mathrm{fin}}$. Since
$d(T_{\mathrm{fin}})\to0$ in $L^2$ and
$s^{-1-\sigma_{\mathrm d}}$ is integrable, we may let
$T_{\mathrm{fin}}\to\infty$.
The pointwise decay then gives
\[
 \norm{d(t)}_2
 \leq C\int_t^\infty
 \left(\abs\beta\varepsilon_0^2s^{-1}
 +C_{\cW}\varepsilon_0^4s^{-2}\right)\norm{d(s)}_2\dd s.
\]
For $T\geq1$, define the finite quantity
\[
 \mathsf d_T
 :=\sup_{s\geq T}s^{\sigma_{\mathrm d}}\norm{d(s)}_2.
\]
Then
\[
 \mathsf d_T\leq
 \left(\frac{C\abs\beta\varepsilon_0^2}{\sigma_{\mathrm d}}
 +\frac{C_{\cW}\varepsilon_0^4}{T}\right)\mathsf d_T.
\]
Choose the data radius smaller and then $T$ larger so that the coefficient
is below one. Thus $d=0$ on a tail, and local $H^1$ uniqueness propagates
the equality back to $t=1$.
\end{proof}

\begin{corollary}\label{cor:sharp-linear-decay}
If $u_1$ is a nonzero datum in the small $\Sigma$-ball of
\autoref{cor:scattering-map}, then
\[
 \lim_{t\to\infty}t^{1/2}\norm{u(t)}_\infty
 =\norm{A[u_1]}_\infty>0.
\]
Thus the upper bound in \eqref{eq:global-decay} has the optimal time power
for every nonzero small solution.
\end{corollary}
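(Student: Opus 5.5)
The argument has two parts: first identify the limit of $t^{1/2}\norm{u(t)}_\infty$ using the pointwise asymptotics of \autoref{thm:forward-scattering}, then show that the limit is positive using injectivity of the scattering map.

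\textbf{Identifying the limit.} The pointwise physical-space estimate in \autoref{thm:forward-scattering} gives
\[
 \Bigl|\,t^{1/2}\norm{u(t)}_\infty
 -\bigl\|\,A(y/t)e^{-i\beta\abs{A(y/t)}^2\log t}\bigr\|_{L^\infty_y}\Bigr|
 \leq C\varepsilon t^{-1/4+2\delta}.
\]
The phase factor has modulus one. After the change of variables $\xi=y/t$, the middle term is exactly $\norm{A}_\infty$, since an essential supremum is invariant under dilation of the variable. Because $\delta<1/16$, the exponent $-1/4+2\delta$ is negative, and we obtain
\[
 \lim_{t\to\infty}t^{1/2}\norm{u(t)}_\infty=\norm{A}_\infty.
\]
One should check that $A$ is defined only almost everywhere, so $\norm{A}_\infty$ means the essential supremum. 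The $L^\infty$ estimate is an essential-supremum estimate, so this is consistent.

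\textbf{Positivity.} It remains to show $\norm{A[u_1]}_\infty>0$ when $u_1\neq0$. The zero datum gives the zero solution, whose profile is $A=0$. By injectivity of $S_+$ in \autoref{cor:scattering-map}, a nonzero $u_1$ therefore has $A[u_1]\neq0$ in $L^2\cap L^\infty$. A function in $L^2\cap L^\infty$ that is nonzero in $L^2$ is nonzero on a set of positive measure, so its essential supremum is positive.

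One could instead argue from mass conservation: $\norm{u(t)}_2=\norm{u_1}_2>0$, and by \eqref{eq:asymptotic-L2} this equals $\norm{A}_2$ in the limit, so $A\neq0$. This route avoids the injectivity corollary altogether, and I would probably present it as the main argument.

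\textbf{Optimality.} The final sentence of the statement follows directly. A positive limit of $t^{1/2}\norm{u(t)}_\infty$ rules out any faster power decay $t^{-1/2-\kappa}$ with $\kappa>0$.

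No step is a genuine obstacle. The only point that needs care is the exponent bookkeeping, namely $2\delta<1/4$, which is guaranteed by the choice $\delta<1/16$.
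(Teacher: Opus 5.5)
Your proof is correct. Your identification of the limit is the paper's argument: the reverse triangle inequality applied to the $t^{-3/4+2\delta}$ pointwise asymptotic, together with invariance of the sup norm under the dilation $\xi=y/t$.

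For positivity, the paper uses the route you list first. It argues that $S_+(0)=0$ and $S_+$ is injective, so $A[u_1]\neq0$, and then takes the continuous representative of $A$ to conclude that $\norm{A}_\infty>0$.

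The mass-conservation route you say you would present as the main argument is genuinely different and more elementary. Combining $\norm{u(t)}_2=\norm{u_1}_2$ with \eqref{eq:asymptotic-L2} gives the exact identity $\norm{A}_2=\norm{u_1}_2$. The paper itself records this identity later, in the proof of \autoref{prop:terminal-profile-characterization}. This route has three advantages:
\begin{itemize}
\item It does not depend on the difference-contraction argument behind injectivity in \autoref{cor:scattering-map}.
\item It does not need the extra shrinking of the data radius that injectivity requires, so it works on the whole ball of \autoref{thm:forward-scattering}.
\item Your essential-supremum remark removes the need to invoke the continuity of $A$ coming from \eqref{eq:A-reg}.
\end{itemize}
The paper's route is a one-line appeal to a corollary it has already proved, but it rests on that heavier uniqueness machinery.
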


\begin{proof}
The pointwise asymptotic in \autoref{thm:forward-scattering} and the reverse
triangle inequality give
\[
 \left|t^{1/2}\norm{u(t)}_\infty-\norm{A[u_1]}_\infty\right|
 \leq C\varepsilon t^{-1/4+2\delta}\longrightarrow0.
\]
Since $S_+(0)=0$ and $S_+$ is injective,
$A[u_1]\neq0$ whenever $u_1\neq0$. Its continuous representative therefore
has positive $L^\infty$ norm.
\end{proof}

The next proposition identifies the derivative of the scattering map at the
origin. In particular, it shows that the distinguished value of the profile
is nonzero for an open set of small data.

\begin{proposition}\label{prop:scattering-map-linearization}
Let $S_+$ be the forward scattering map from
\autoref{cor:scattering-map}. Define
\begin{equation}\label{eq:linear-scattering-map}
 (S_{\mathrm{lin}} f)(\xi)
 :=\frac{e^{-i\pi/4}}{2\sqrt\pi}
 e^{i\xi^2/4}\widehat f(\xi/2).
\end{equation}
The operator $S_{\mathrm{lin}}:\Sigma\to
L^2(\R)\cap L^\infty(\R)$ is bounded.
More precisely,
\begin{equation}\label{eq:linear-scattering-map-bounded}
 \norm{S_{\mathrm{lin}}f}_2=\norm f_2,
 \qquad
 \norm{S_{\mathrm{lin}}f}_\infty
 \leq\frac1{2\sqrt\pi}\norm f_1
 \leq C\norm f_\Sigma.
\end{equation}
There are constants $\varepsilon_{\mathrm{lin}}>0$ and
$C_{\mathrm{lin}}>0$, depending only on $\beta$ and $\cW$, such that
\begin{equation}\label{eq:scattering-map-linearization}
 \norm{S_+(f)-S_{\mathrm{lin}}f}_2
 +\norm{S_+(f)-S_{\mathrm{lin}}f}_\infty
 \leq C_{\mathrm{lin}}\norm f_\Sigma^3
\end{equation}
whenever $\norm f_\Sigma\leq\varepsilon_{\mathrm{lin}}$. Thus
$S_+$ is Fr\'echet differentiable at the origin from $\Sigma$ to
$L^2\cap L^\infty$, with derivative
$DS_+(0)=S_{\mathrm{lin}}$.
The constants are independent of $\gamma$.
\end{proposition}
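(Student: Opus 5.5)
The plan is to compare the nonlinear solution with the free evolution $u_{\mathrm{free}}(t)=e^{i(t-1)\partial_y^2}f$ and to identify $S_{\mathrm{lin}}f$ as the scattering profile of $u_{\mathrm{free}}$. The boundedness claims \eqref{eq:linear-scattering-map-bounded} are immediate. Plancherel with the convention \eqref{eq:fourier-convention} gives $\norm{\widehat f(\cdot/2)}_2^2=2\cdot 2\pi\norm f_2^2$, so the prefactor $1/(2\sqrt\pi)$ makes $S_{\mathrm{lin}}$ an $L^2$ isometry. The $L^\infty$ bound follows from $\abs{\widehat f}\leq\norm f_1\leq C\norm{\la y\ra f}_2\leq C\norm f_\Sigma$ (Cauchy--Schwarz).

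Next I would identify the linear profile. Because the free flow is normalized at $t=1$, I will write $e^{i(t-1)\partial_y^2}$ as $e^{it\partial_y^2}$ applied to $f_0:=e^{-i\partial_y^2}f$, whose Fourier transform is $e^{i\zeta^2}\widehat f$. Using the factorization $e^{it\partial_y^2}=M_tD_t\mathcal F M_t$, where $M_t$ is multiplication by $e^{iy^2/(4t)}$, we get
\[
 e^{it\partial_y^2}f_0(y)=\frac{e^{-i\pi/4}}{2\sqrt{\pi t}}e^{iy^2/(4t)}\widehat{f_0}(y/(2t))+O(t^{-3/4}\norm{f}_\Sigma)
\]
pointwise, and $O(t^{-1/2}\cdot t^{-1/2}\norm{xf}_2)$ in $L^2$. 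The phase factors $e^{i\zeta^2}$ at $\zeta=\xi/2$, together with the $e^{iy^2/4t}$ convention, should combine into the factor $e^{i\xi^2/4}$ in \eqref{eq:linear-scattering-map}. I will check this carefully, since the constant and the sign of the phase are fixed by the convention. This identifies $S_{\mathrm{lin}}f$ as the exact packet limit for $\beta=0$, $\cW=0$.

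For the nonlinear part, I would write $u=u_{\mathrm{free}}+w$, with $w(t)=-i\int_1^t e^{i(t-s)\partial_y^2}N(u(s))\,ds$. The cleanest route is to compare the packet coefficients. Let $\Gamma_{\mathrm{free}}$ be the coefficient of $u_{\mathrm{free}}$, which converges to $S_{\mathrm{lin}}f$ at rate $t^{-1/4}$ in $L^\infty$ and $t^{-1/2}$ in $L^2$ (\autoref{lem:Gamma-ODE} with $\beta=\cW=0$). For the nonlinear $u$, \autoref{thm:forward-scattering} gives $Z(t)\to A$, and hence
\[
 A-S_{\mathrm{lin}}f=\lim_{t\to\infty}\bigl[Z(t)-\Gamma(t)\bigr]+\lim_{t\to\infty}\bigl[\Gamma(t)-\Gamma_{\mathrm{free}}(t)\bigr].
\]
The difference $\Gamma-\Gamma_{\mathrm{free}}$ solves $\partial_t(\Gamma-\Gamma_{\mathrm{free}})=-i\beta t^{-1}\abs\Gamma^2\Gamma+(\mathcal E_\Gamma-\mathcal E_{\Gamma,\mathrm{free}})$. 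Here the nonlinear remainders are all $O(\varepsilon^3)$ times integrable powers, by \eqref{eq:Ecubic-inf}--\eqref{eq:EW-L2}. The linear error $\mathcal E_{\mathrm{lin}}$ is linear in $u$, so its difference is controlled by $\norm{Lw}_2$. That norm is $O(\varepsilon^3t^{\delta})$ from the $L$-energy estimate applied to the Duhamel part, since the forcing is cubic.

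\textbf{The main obstacle} is the resonant term $-i\beta t^{-1}\abs\Gamma^2\Gamma$, which is not integrable: $\Gamma$ itself does not converge. I would therefore not compare $\Gamma$ directly, but compare $Z$ with $\Gamma_{\mathrm{free}}$. Since $\partial_tZ$ involves only $\mathcal E_\Gamma$ times the factor $(1+\abs\beta\varepsilon^2\log t)$, I get
\[
 Z(t)-\Gamma_{\mathrm{free}}(t)=Z(1)-\Gamma_{\mathrm{free}}(1)+\int_1^t\bigl[\partial_sZ-\partial_s\Gamma_{\mathrm{free}}\bigr]\,ds .
\]
At $t=1$, the phase in $Z(1)$ is trivial because $\log1=0$, and $u(1)=u_{\mathrm{free}}(1)=f$, so $Z(1)=\Gamma_{\mathrm{free}}(1)$. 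In the integrand, the pieces $\mathcal E_{\mathrm{lin}}[u]-\mathcal E_{\mathrm{lin}}[u_{\mathrm{free}}]=\mathcal E_{\mathrm{lin}}[w]$ are bounded by $t^{-5/4}\norm{Lw}_2\lesssim\varepsilon^3t^{-5/4+\delta}$. The pieces $\mathcal E_{\mathrm{cub}},\mathcal E_{\cW}$ and the $\log t$ correction are $O(\varepsilon^3)$ times integrable rates. Letting $t\to\infty$ yields $\norm{A-S_{\mathrm{lin}}f}_{L^2\cap L^\infty}\leq C\varepsilon^3$ with $\varepsilon=\norm f_\Sigma$. The remaining delicate point is the bound $\norm{Lw(t)}_2\lesssim\varepsilon^3t^\delta$. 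It follows from the Duhamel formula for $Lw$ using \eqref{eq:L-commute}--\eqref{eq:L-W} and the decay \eqref{eq:global-decay}: the cubic forcing $\lesssim\varepsilon^2s^{-1}\norm{Lu}_2$ integrates to $\varepsilon^3t^{\delta}/\delta\cdot\delta$, i.e. $\lesssim\varepsilon^3t^\delta$ (using $\delta\sim\abs\beta\varepsilon^2$), and the term $2is\cW'\abs u^4u$ is $O(\varepsilon^5s^{-3/2})$. Fr\'echet differentiability follows since the error is cubic, and every constant above is independent of $\gamma$.
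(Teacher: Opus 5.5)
Your route is the paper's route. You compare $Z=\Gamma e^{i\beta\abs\Gamma^2\log t}$ with the free packet coefficient, starting from $Z(1)=\Gamma_{\mathrm{free}}(1)$. You identify $\lim_{t\to\infty}\Gamma_{\mathrm{free}}(t)=S_{\mathrm{lin}}f$ from the explicit free kernel together with \eqref{eq:packet-pointwise}. You control $\mathcal E_{\mathrm{lin}}[w]$ through $\norm{Lw}_2$. Two steps need repair, and the first is exactly the one you call delicate.

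\textbf{The bound on $\norm{Lw}_2$.} The computation you sketch does not give $\norm{Lw(t)}_2\lesssim\varepsilon^3t^\delta$. The cubic forcing contributes
\[
 C\abs\beta\int_1^t\varepsilon^2s^{-1}\,\varepsilon s^{\delta}\dd s
 =C\abs\beta\varepsilon^3\,\frac{t^\delta-1}{\delta}.
\]
The $1/\delta$ is not cancelled by the coefficient, since $\abs\beta/\delta=1/(C_\delta\varepsilon^2)$. If you bound $(t^\delta-1)/\delta$ by $t^\delta/\delta$, as your shortcut does, you are left with $(C/C_\delta)\,\varepsilon t^\delta$. That is linear in $\varepsilon$ and would only give $\norm{S_+(f)-S_{\mathrm{lin}}f}=O(\varepsilon)$.

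Keep the subtraction instead. Since $(t^\delta-1)/\delta\leq t^\delta\log t$, you get $\norm{Lw(t)}_2\leq C\varepsilon^3(1+\log t)t^\delta$, which is \eqref{eq:L-nonlinear-linear-difference}. The logarithm is harmless: $(1+\log t)t^{-a_q+\delta}$ remains integrable for $\delta\leq1/16$, with $a_\infty=5/4$ and $a_2=3/2$.

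\textbf{The phase factor.} $\partial_tZ$ carries the factor $e^{i\theta}$, with $\theta=\beta\abs\Gamma^2\log t$, while $\partial_t\Gamma_{\mathrm{free}}=\mathcal E_{\mathrm{lin}}[u_{\mathrm{free}}]$ does not. So the linear packet errors combine to
\[
 e^{i\theta}\mathcal E_{\mathrm{lin}}[w]
 +\bigl(e^{i\theta}-1\bigr)\mathcal E_{\mathrm{lin}}[u_{\mathrm{free}}],
\]
not to $\mathcal E_{\mathrm{lin}}[w]$ alone. You have $\norm\theta_\infty\leq C\abs\beta\varepsilon^2\log t$ and $\norm{L(t)u_{\mathrm{free}}(t)}_2=\norm{L(1)f}_2\leq3\varepsilon$. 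Hence the extra term is $O(\abs\beta\varepsilon^3\log t\,t^{-a_q})$ in $L^q_\xi$. It is cubic and integrable, but it must be included.

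With these two corrections, your proof is the paper's proof. You also need a final approximation by smooth data, because \autoref{lem:Gamma-ODE} is stated for smooth solutions, combined with the continuity of $S_+$ from \autoref{cor:scattering-map}.
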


\begin{proof}
Plancherel's theorem and the change of variables $\xi=2\zeta$ give
$\norm{S_{\mathrm{lin}}f}_2=\norm f_2$. The Fourier $L^1$ bound and
$\Sigma\hookrightarrow L^1$ give the two $L^\infty$ estimates in
\eqref{eq:linear-scattering-map-bounded}.

Let
\[
 u^{\mathrm{lin}}(t)=e^{i(t-1)\partial_y^2}f
\]
and denote its packet coefficient by $\Gamma^{\mathrm{lin}}$. Since $L(t)$
commutes with the free Schr\"odinger operator,
\[
 \norm{L(t)u^{\mathrm{lin}}(t)}_2
 =\norm{L(1)f}_2\leq3\norm f_\Sigma.
\]
The linear estimates in \autoref{lem:Gamma-ODE} give
\[
 \norm{\partial_t\Gamma^{\mathrm{lin}}(t)}_\infty
 \leq Ct^{-5/4}\norm f_\Sigma,
 \qquad
 \norm{\partial_t\Gamma^{\mathrm{lin}}(t)}_2
 \leq Ct^{-3/2}\norm f_\Sigma.
\]
Hence $\Gamma^{\mathrm{lin}}(t)$ converges in $L^2\cap L^\infty$.

We identify its limit from the free kernel. Since $f\in\Sigma\subset L^1$,
for each $\xi\in\R$,
\begin{align*}
 &\sqrt t\,e^{-it\xi^2/4}u^{\mathrm{lin}}(t,t\xi)=\frac{\sqrt t}{\sqrt{4\pi i(t-1)}}
 \int_\R\exp\left\{i\left[
 \frac{(t\xi-y)^2}{4(t-1)}-\frac{t\xi^2}{4}
 \right]\right\}f(y)\dd y\longrightarrow
 \frac{e^{-i\pi/4}}{2\sqrt\pi}
 e^{i\xi^2/4}\widehat f(\xi/2).
\end{align*}
The dominated convergence theorem applies because the oscillatory factor has
unit modulus and $f\in L^1$. On the other hand,
\eqref{eq:packet-pointwise} gives
\[
 \left|\Gamma^{\mathrm{lin}}(t,\xi)
 -\sqrt t\,e^{-it\xi^2/4}u^{\mathrm{lin}}(t,t\xi)\right|
 \leq Ct^{-1/4}\norm{L(1)f}_2.
\]
The pointwise calculation identifies the $L^2\cap L^\infty$ limit almost
everywhere. The continuous expression in \eqref{eq:linear-scattering-map}
selects its representative. Hence
\begin{equation}\label{eq:free-packet-limit}
 \Gamma^{\mathrm{lin}}(t)\longrightarrow S_{\mathrm{lin}}f
 \quad\text{in }L^2\cap L^\infty.
\end{equation}

Let $u$ be the nonlinear solution with $u(1)=f$, and set
\[
 d(t)=u(t)-u^{\mathrm{lin}}(t),
 \qquad \varepsilon=\norm f_\Sigma.
\]
After decreasing $\varepsilon_{\mathrm{lin}}$, the estimates in
\autoref{thm:forward-scattering} give
\begin{equation*}
 \norm{u(t)}_\infty\leq C\varepsilon t^{-1/2},
 \qquad
 \norm{L(t)u(t)}_2\leq C\varepsilon t^\delta,
 \qquad 0\leq\delta\leq\frac1{16}.
\end{equation*}
The difference has zero initial value and satisfies
\[
 (i\partial_t+\partial_y^2)d
 =\beta\abs u^2u-\cW(y+\gamma t)\abs u^4u.
\]
Applying $L$ and using \eqref{eq:L-cubic}--\eqref{eq:L-W} gives
\begin{align}
 \norm{L(t)d(t)}_2
 &\leq C\abs\beta\int_1^t
 \norm{u(s)}_\infty^2\norm{L(s)u(s)}_2\dd s +C\norm{\cW}_\infty\int_1^t
 \norm{u(s)}_\infty^4\norm{L(s)u(s)}_2\dd s\notag\\
 &\quad\quad+C\norm{\cW'}_2\int_1^t
 s\norm{u(s)}_\infty^5\dd s\notag\\
 &\leq C\varepsilon^3(1+\log t)t^\delta.
 \label{eq:L-nonlinear-linear-difference}
\end{align}
The localized quintic integrals in the second and third lines are
$O(\varepsilon^5)$.

Let $\Gamma$ be the packet coefficient of $u$, let $\mathcal E_\Gamma$ be
the error in \eqref{eq:Gamma-ODE}, and set
\[
 Z(t,\xi)=\Gamma(t,\xi)
 e^{i\beta\abs{\Gamma(t,\xi)}^2\log t}.
\]
For $q\in\{2,\infty\}$, put $a_2=3/2$ and $a_\infty=5/4$.
The estimates in \autoref{lem:Gamma-ODE} and
\eqref{eq:L-nonlinear-linear-difference} imply
\begin{align}
 \norm{\mathcal E_{\mathrm{lin}}[d](t)}_q
 &\leq C\varepsilon^3(1+\log t)t^{-a_q+\delta},
 \label{eq:linearization-error-d}\\
 \norm{\mathcal E_{\mathrm{cub}}[u](t)}_q
 &\leq C\varepsilon^3t^{-a_q+\delta},
 \notag\\
 \norm{\mathcal E_{\cW}[u](t)}_q
 &\leq C\varepsilon^5t^{-a_q},
 \label{eq:linearization-error-W}\\
 \norm{\mathcal E_\Gamma(t)}_q
 &\leq C\varepsilon t^{-a_q+\delta}.
 \label{eq:linearization-error-total}
\end{align}
For example, the localized bounds
$t^{-5/2}$ in $L^\infty$ and $t^{-11/4}$ in $L^2$ have been weakened in
\eqref{eq:linearization-error-W}.

Set $\theta=\beta\abs{\Gamma}^2\log t$. The packet bound gives
$\norm\Gamma_\infty\leq C\varepsilon$ and
$\norm\theta_\infty\leq C\varepsilon^2\log t$. Since
\[
 \partial_tZ=e^{i\theta}\left[
 \mathcal E_\Gamma+2i\beta\log t\,
 \Re(\overline\Gamma\mathcal E_\Gamma)\Gamma\right]
\]
and
$\partial_t\Gamma^{\mathrm{lin}}
=\mathcal E_{\mathrm{lin}}[u^{\mathrm{lin}}]$, linearity of
$\mathcal E_{\mathrm{lin}}$ yields
\begin{align*}
 \partial_tZ-\partial_t\Gamma^{\mathrm{lin}}
 &=e^{i\theta}\mathcal E_{\mathrm{lin}}[d]
 +(e^{i\theta}-1)\mathcal E_{\mathrm{lin}}[u^{\mathrm{lin}}] +e^{i\theta}\left(
 \mathcal E_{\mathrm{cub}}[u]+\mathcal E_{\cW}[u]\right) +2i\beta e^{i\theta}\log t\,
 \Re(\overline\Gamma\mathcal E_\Gamma)\Gamma.
\end{align*}
Using $\abs{e^{i\theta}-1}\leq\abs\theta$ and
\eqref{eq:linearization-error-d}--\eqref{eq:linearization-error-total},
we obtain
\[
 \norm{\partial_tZ(t)-\partial_t\Gamma^{\mathrm{lin}}(t)}_q
 \leq C\varepsilon^3(1+\log t)t^{-a_q+\delta}.
\]
This is integrable because $\delta\leq1/16$. Moreover,
$Z(1)=\Gamma(1)=\Gamma^{\mathrm{lin}}(1)$. Integrating from $1$ to
infinity and using \eqref{eq:free-packet-limit} proves
\eqref{eq:scattering-map-linearization} for smooth data. Approximation in
$\Sigma$, together with \autoref{cor:scattering-map}, proves it for every
$f\in\Sigma$.
\end{proof}

\begin{corollary}\label{cor:nonvanishing-distinguished-profile}
Assume
\[
 M_0=\int_\R\cW(x)\dd x\neq0.
\]
Then the set of sufficiently small data $u_1\in\Sigma$ for which
$M_0A[u_1](-\gamma)\neq0$ is nonempty and open in $\Sigma$.
\end{corollary}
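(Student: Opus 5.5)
Since $M_0\neq0$, the condition $M_0A[u_1](-\gamma)\neq0$ reduces to $A[u_1](-\gamma)\neq0$. The plan is to treat the evaluation $u_1\mapsto S_+(u_1)(-\gamma)$ as a continuous functional on the small $\Sigma$-ball, and then use the linearization from \autoref{prop:scattering-map-linearization} to exhibit data where it does not vanish. Openness and nonemptiness are then handled separately.

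\emph{Openness.} By \autoref{cor:scattering-map}, $S_+$ is continuous from the small $\Sigma$-ball into $L^2\cap L^\infty$ with the sum norm. The only subtlety is that point evaluation is defined on continuous representatives, so I would first check that each $A[u_1]$ has one. Two routes are available:
\begin{itemize}
\item By \eqref{eq:A-reg}, $A\in H^\varsigma$ for $\varsigma<1-2\delta$, with $\varsigma>1/2$ since $\delta<1/16$; Sobolev embedding then gives a continuous representative.
\item Alternatively, $A$ is the uniform limit of $Z(t)$, and $Z(t)$ is continuous in $\xi$ because $\Gamma(t,\xi)$ is an integral against a packet depending continuously on $\xi$.
\end{itemize}
For continuous functions, the $L^\infty$ norm equals the sup norm, so $|A[u_1](-\gamma)-A[u_2](-\gamma)|\le\|A[u_1]-A[u_2]\|_\infty$. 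Hence evaluation at $-\gamma$ is continuous on the ball, and the set where it is nonzero is open. Intersecting with the open small ball keeps it open in $\Sigma$.

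\emph{Nonemptiness.} Fix $f\in\mathcal S(\R)$ with $\widehat f(-\gamma/2)\neq0$; a Gaussian works for any $\gamma$. Put $u_1=\kappa f$ with $\kappa>0$ small. By \eqref{eq:linear-scattering-map},
\[
|S_{\mathrm{lin}}(\kappa f)(-\gamma)|=\frac{\kappa}{2\sqrt\pi}|\widehat f(-\gamma/2)|=c_f\kappa,
\]
with $c_f>0$. Since $S_{\mathrm{lin}}f$ is continuous, the bound \eqref{eq:scattering-map-linearization} in $L^\infty$ applies pointwise, which gives
\[
|A[\kappa f](-\gamma)|\ge c_f\kappa-C_{\mathrm{lin}}\kappa^3\|f\|_\Sigma^3>0
\]
for $\kappa$ small. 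The constants are independent of $\gamma$, so this works uniformly.

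The main obstacle is to justify pointwise evaluation: both the continuity of the representative and the passage from the $L^\infty$ estimate to a pointwise one at the single point $-\gamma$. The continuity argument above settles this, because an essential supremum bound for the difference of two continuous functions is a genuine supremum bound.
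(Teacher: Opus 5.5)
Your proposal is correct and follows essentially the paper's argument. Nonvanishing comes from the cubic-order linearization applied to a small multiple of a Schwartz datum with $\widehat f(-\gamma/2)\neq0$: the paper takes the modulated Gaussian $e^{-i\gamma y/2}e^{-y^2}$, for which $(S_{\mathrm{lin}}f_\gamma)(-\gamma)$ has modulus $\tfrac12$. Openness then follows from the $L^\infty$-continuity of $S_+$ together with the continuous representatives supplied by \eqref{eq:A-reg}.

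The only inaccuracy is your closing remark about uniformity in $\gamma$. For an unmodulated Gaussian, $c_f=\tfrac12e^{-\gamma^2/16}$ depends on $\gamma$. This does not matter, since the statement is for fixed $\gamma$ and needs no such uniformity.
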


\begin{proof}
Consider $f_\gamma(y)=e^{-i\gamma y/2}e^{-y^2}$. Then
$f_\gamma\in\cS(\R)$ and
\[
 \widehat f_\gamma(\zeta)
 =\sqrt\pi\,e^{-(\zeta+\gamma/2)^2/4}.
\]
Consequently,
\[
 (S_{\mathrm{lin}}f_\gamma)(-\gamma)
 =\frac12e^{i\gamma^2/4-i\pi/4}\neq0.
\]
For sufficiently small nonzero real $\rho_{\mathrm g}$,
\autoref{prop:scattering-map-linearization} gives
\[
 A[\rho_{\mathrm g} f_\gamma](-\gamma)
 =\frac{\rho_{\mathrm g}}2e^{i\gamma^2/4-i\pi/4}
 +O(\rho_{\mathrm g}^3)\neq0.
\]
The profiles have continuous representatives by \eqref{eq:A-reg}.
Continuity of $S_+$ in $L^\infty$ shows that the nonvanishing
condition persists on a $\Sigma$-neighborhood of
$\rho_{\mathrm g} f_\gamma$.
\end{proof}

This gives forward modified scattering and an injective scattering map for
small $\Sigma$ data. It does not assert surjectivity onto
$L^2\cap L^\infty$. The global wave operator in
\autoref{thm:wave-operator} supplies the complementary final-state
construction on the smoother profile class $\fkX$.

\section{The distinguished ray and an exact cubic-renormalized profile}
\label{sec:distinguished}

The coefficient $\cW\bigl(s(\xi+\gamma)\bigr)$ is localized near
$\xi=-\gamma$. On the scale $\abs{\xi+\gamma}\sim s^{-1}$, however,
$\partial_\xi$ has size $s$, so $s^{-2}\partial_\xi^2$ remains of order
one. A scalar eikonal integration that neglects dispersion therefore does
not describe this layer.

We remove the full cubic Duhamel increment. The resulting
history-dependent interaction profile has a tail generated exactly by the
localized quintic term. We use the Fourier transform convention recalled in
\eqref{eq:fourier-convention}.

Let $u$ be the global solution from \autoref{thm:forward-scattering}, and let
$A$ be its modified scattering profile. By \eqref{eq:A-reg} and
$\delta<1/16$, the profile has a unique continuous representative.

The exact cubic counterterm defines the interaction profile used in the inner
analysis.

\begin{definition}
\label{def:exact-renormalized-profile}
For $t\geq1$ and $\zeta\in\R$, define
\begin{equation}\label{eq:exact-renormalized-profile}
 \cP_u(t,\zeta)
 :=e^{it\zeta^2}\widehat u(t,\zeta)
 +i\beta\int_1^t e^{is\zeta^2}
 \widehat{\abs{u(s)}^2u(s)}(\zeta)\dd s.
\end{equation}
\end{definition}
Here, the integral is understood in $L^2_\zeta$ and has a continuous pointwise
representative. By construction, the counterterm cancels the full cubic
Duhamel increment, including its nonresonant part. Thus $\cP_u$ is an exact
cubic-renormalized interaction profile rather than a phase-only profile.

Time integrability of the localized quintic source gives convergence and an
exact representation of the tail.

\begin{proposition}
\label{prop:exact-renormalized-tail}
Let $0<\varepsilon\leq\varepsilon_0$, and let $u$ be the solution furnished
by \autoref{thm:forward-scattering} for data satisfying
$\norm{u(1)}_\Sigma\leq\varepsilon$. There is a unique
\[
 \cP_{u,+}\in L^2(\R)\cap C_0(\R)
\]
such that $\cP_u(t)\to\cP_{u,+}$ both in $L^2_\zeta$ and uniformly in $\zeta$ as
$t\to\infty$. More precisely, for $T\geq1$,
\begin{align}
 \norm{\cP_{u,+}-\cP_u(T)}_{L^2_\zeta}
 &\leq C_{\beta,\cW}\varepsilon^5T^{-7/4},
 \label{eq:P-tail-L2}\\
 \norm{\cP_{u,+}-\cP_u(T)}_{L^\infty_\zeta}
 &\leq C_{\beta,\cW}\varepsilon^5T^{-3/2}.
 \label{eq:P-tail-Linfty}
\end{align}
Its tail satisfies the exact identity
\begin{equation}\label{eq:localized-Duhamel-tail}
 \mathfrak D_{\cW}(T,\zeta)
 :=\cP_{u,+}(\zeta)-\cP_u(T,\zeta)
 =i\int_T^\infty e^{is\zeta^2}
 \widehat{\cW(\cdot+\gamma s)\abs{u(s)}^4u(s)}(\zeta)\dd s.
\end{equation}
\end{proposition}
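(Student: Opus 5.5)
The plan is to compute $\partial_t\cP_u$ from the Duhamel formula, observe that only the localized quintic forcing survives, and then integrate from $T$ to $\infty$. Taking the Fourier transform of \eqref{eq:comoving} in $y$ gives
\[
 \partial_t\bigl(e^{it\zeta^2}\widehat u(t,\zeta)\bigr)
 =-ie^{it\zeta^2}\Bigl[\beta\widehat{\abs{u}^2u}(t,\zeta)
 -\widehat{\cW(\cdot+\gamma t)\abs{u}^4u}(t,\zeta)\Bigr].
\]
This holds first for smooth solutions. For $\Sigma$ solutions it holds in the integrated (mild) form, in $L^2_\zeta$ and pointwise, since all nonlinear terms lie in $L^1_y\cap L^2_y$ for each $t$. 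Adding the derivative of the cubic counterterm in \eqref{eq:exact-renormalized-profile} cancels the cubic part exactly. Hence, for $1\leq T<T'$,
\[
 \cP_u(T',\zeta)-\cP_u(T,\zeta)
 =i\int_T^{T'}e^{is\zeta^2}
 \widehat{\cW(\cdot+\gamma s)\abs{u(s)}^4u(s)}(\zeta)\dd s.
\]
The proof then reduces to showing that the integrand is absolutely integrable in $s$, with values in $L^2_\zeta$ and in $L^\infty_\zeta$, at the stated rates.

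\textbf{The $L^\infty_\zeta$ bound.} We use $\norm{\widehat F}_\infty\leq\norm F_1$ together with the decay \eqref{eq:global-decay}:
\[
 \norm{\cW(\cdot+\gamma s)\abs{u}^4u}_1\leq\norm{\cW}_1\norm{u(s)}_\infty^5\leq C\varepsilon^5s^{-5/2}.
\]
Integrating over $[T,\infty)$ gives $C\varepsilon^5T^{-3/2}$.

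\textbf{The $L^2_\zeta$ bound.} Plancherel gives
\[
 \norm{\widehat F}_2=\sqrt{2\pi}\norm F_2,
 \qquad
 \norm{\cW(\cdot+\gamma s)\abs{u}^4u}_2\leq\norm{\cW}_2\norm{u(s)}_\infty^5\lesssim\varepsilon^5 s^{-5/2}.
\]
This alone yields only $T^{-3/2}$, so we need an extra $s^{-1/4}$. This is the main obstacle. The plan is to use that $\cW(\cdot+\gamma s)$ localizes $y$ to a region of size $O(1)$ and to trade one factor of $u$ for a local $L^2$ norm. The first attempt writes
\[
 \norm{\cW\abs u^4u}_2\leq\norm{\cW^{1/2}}_\infty\norm u_\infty^4\norm{\abs{\cW}^{1/2}u}_2.
\]
It then bounds $\norm{\abs{\cW}^{1/2}u}_2\lesssim s^{-1/2}\norm{\la\cdot\ra^{-p/2}}_2\norm{u}_\infty$, which gives nothing new. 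A better route is the pseudoconformal structure $u=e^{iy^2/(4s)}g$, where $\partial_yg=(2is)^{-1}e^{-iy^2/4s}Lu$, so $g$ is nearly constant on $O(1)$ scales. Following the argument in \eqref{eq:EW-L2}, we pass to the frequency side and Taylor-expand the phase $e^{iy^2/(4s)}$ on the support of $\cW$. The Fourier transform is then essentially $\widehat{\cW(\cdot+\gamma s)}$ evaluated at a profile-dependent frequency, modulated by the point value $\abs{u}^4u(-\gamma s)$, and its $L^2_\zeta$ norm is controlled by $\norm{\cW}_2\abs{u(s,-\gamma s)}^5$ plus a commutator error. The commutator error carries $\norm{\partial_y(\abs{g}^4g)}_{L^2(\text{supp-weighted})}\lesssim s^{-1}\norm u_\infty^4\norm{Lu}_2$, which is of order $s^{-3+\delta}$ and integrable.

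If this local gain proves hard to extract cleanly, the fallback is to interpolate using the $H^1_y$ (vector-field) bound for the forcing, $\norm{\partial_y(\cW\abs u^4u)}_2\lesssim s^{-2}$. Weighting in $\zeta$ would then give $\norm{\la\zeta\ra\widehat F}_2$, which can be combined with the $L^1$ bound through the interpolation $\norm{\widehat F}_2^2\leq\norm{\widehat F}_\infty\norm{\widehat F}_1$. Here $\norm{\widehat F}_1\lesssim\norm{\la\zeta\ra\widehat F}_2$. This yields
\[
 \norm{\widehat F}_2\lesssim\varepsilon^5 s^{-(5/2+2)/2}=\varepsilon^5s^{-9/4}\cdot s^{0}\,(\text{up to }s^{\delta}),
\]
and hence also more than enough for $T^{-7/4}$.

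\textbf{Conclusion.} With both rates in hand, $\cP_u(t)$ is Cauchy in $L^2\cap L^\infty$, and its limit $\cP_{u,+}$ is unique. For continuity and vanishing at infinity, note that each $\cP_u(t)$ lies in $C_0$. This holds because $\widehat u(t)\in C_0$ by Riemann--Lebesgue ($u\in L^1$ since $u\in\Sigma$), and the cubic counterterm is a locally uniformly convergent integral of Riemann--Lebesgue functions. A uniform limit of $C_0$ functions is in $C_0$. Letting $T'\to\infty$ in the difference identity gives \eqref{eq:localized-Duhamel-tail}, and the two integrated bounds give \eqref{eq:P-tail-L2} and \eqref{eq:P-tail-Linfty}.
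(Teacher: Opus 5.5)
Your Duhamel identity for $\partial_t\cP_u$, the $L^\infty_\zeta$ rate \eqref{eq:P-tail-Linfty}, the $C_0$ argument, and the exact tail formula \eqref{eq:localized-Duhamel-tail} are fine and follow the paper. The gap is the $L^2_\zeta$ rate \eqref{eq:P-tail-L2}.

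Write $F(s)=\cW(\cdot+\gamma s)\abs{u(s)}^4u(s)$. Both of your routes estimate $\norm{\widehat{F(s)}}_{L^2_\zeta}$ at each fixed time and then integrate in $s$, i.e.\ they apply Minkowski's inequality in $s$ first. No such argument can beat $T^{-3/2}$. By Plancherel, $\norm{\widehat{F(s)}}_2=\sqrt{2\pi}\norm{F(s)}_2$. The pointwise asymptotics of \autoref{thm:forward-scattering}, together with continuity of $A$, give $\abs{u(s,y)}=s^{-1/2}(\abs{A_0}+o(1))$ uniformly for $\abs{y+\gamma s}\leq R$. Hence $\norm{F(s)}_2\gtrsim\abs{A_0}^5s^{-5/2}$ whenever $A_0\neq0$ and $\cW\not\equiv0$.

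So there is no fixed-time factor $s^{-1/4}$ to extract. Your first route in fact ends at $\norm{\cW}_2\abs{u(s,-\gamma s)}^5\sim s^{-5/2}$, with no gain. The analogy with \eqref{eq:EW-L2} is misleading: the $t^{-1/4}$ there comes from the $t^{-1/2}$ width of the wave packets in the velocity variable $\xi$, not from anything in the Fourier variable.

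Your fallback also has an arithmetic error. $(s^{-5/2}\cdot s^{-2})^{1/2}=s^{-9/4}$ integrates to $T^{-5/4}$. That is worse than the trivial $T^{-3/2}$, not ``more than enough'' for $T^{-7/4}$.

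The missing idea is to exploit the oscillation of $e^{is\zeta^2}$ across different times before taking the $L^2_\zeta$ norm. Since $e^{is\zeta^2}\widehat{F(s)}$ is the Fourier transform of $e^{-is\partial_y^2}F(s)$, the tail is the Fourier transform of $\int_T^\infty e^{-is\partial_y^2}F(s)\dd s$. The paper then applies the dual homogeneous Strichartz estimate:
\[
 \Bigl\lVert\int_T^\infty e^{-is\partial_y^2}F(s)\dd s\Bigr\rVert_{L^2_y}
 \leq C\norm{F}_{L^{4/3}_s([T,\infty);L^1_y)}
 \leq C\norm{\cW}_1\varepsilon^5\Bigl(\int_T^\infty s^{-10/3}\dd s\Bigr)^{3/4}
 \leq C\varepsilon^5T^{-7/4}.
\]
An elementary alternative is a $TT^*$ expansion. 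Combine the dispersive bound $\norm{e^{i(s'-s)\partial_y^2}}_{L^1\to L^\infty}\leq C\abs{s-s'}^{-1/2}$ with $\norm{F(s)}_1\lesssim\varepsilon^5s^{-5/2}$. This gives $\norm{\mathfrak D_{\cW}(T)}_2^2\lesssim\varepsilon^{10}\int_T^\infty\int_T^\infty\abs{s-s'}^{-1/2}s^{-5/2}s'^{-5/2}\dd s\dd s'\lesssim\varepsilon^{10}T^{-7/2}$, by scaling $s=T\sigma$.

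Heuristically, the tail has height $T^{-3/2}$ but is concentrated in a frequency window of width $T^{-1/2}$ around $\zeta_0$, as \autoref{thm:exact-boundary-layer} confirms. Its $L^2$ norm is therefore $T^{-3/2}\cdot T^{-1/4}$. This concentration is produced by the time integration and cannot be seen by any fixed-time estimate.
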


\begin{proof}
On each compact time interval, the cubic integral is well defined in
$L^2_\zeta$ because
\[
 \norm{\abs{u}^2u}_2\leq\norm{u}_\infty^2\norm{u}_2.
\]
Moreover, $u(t)\in L^2\cap L^\infty$ gives
$\norm{\abs{u}^2u}_1\leq\norm{u}_2^2\norm{u}_\infty$, so its Fourier
transform belongs to $C_0$. Since persistence of $\Sigma$ gives
$u(t)\in\Sigma\subset L^1$, the same is true of
$e^{it\zeta^2}\widehat{u(t)}$. Thus $\cP_u(t)\in L^2\cap C_0$ for every finite
$t$. On compact time intervals, the map
$s\mapsto\abs{u(s)}^2u(s)$ is continuous into $L^1\cap L^2$. Hence the
finite-time cubic integral in \eqref{eq:exact-renormalized-profile} is a
Bochner integral with values in $C_0\cap L^2$.

The Duhamel formula for \eqref{eq:comoving} gives, as an identity in $L^2_\zeta$,
\begin{equation}\label{eq:P-derivative}
 \partial_t\cP_u(t,\zeta)
 =i e^{it\zeta^2}
 \widehat{\cW(\cdot+\gamma t)\abs{u(t)}^4u(t)}(\zeta).
\end{equation}
Equivalently,
\begin{equation}\label{eq:P-exact-representation}
 \cP_u(t,\zeta)
 =e^{i\zeta^2}\widehat{u(1)}(\zeta)
 +i\int_1^t e^{is\zeta^2}
 \widehat{\cW(\cdot+\gamma s)\abs{u(s)}^4u(s)}(\zeta)\dd s.
\end{equation}
The decay estimate in \autoref{thm:forward-scattering} implies
\begin{align}
 \norm{\cW(\cdot+\gamma t)\abs{u}^4u}_2
 &\leq\norm{\cW}_2\norm{u}_\infty^5
 \leq C_{\beta,\cW}\varepsilon^5t^{-5/2},
 \label{eq:P-source-L2}\\
 \norm{\cW(\cdot+\gamma t)\abs{u}^4u}_1
 &\leq\norm{\cW}_1\norm{u}_\infty^5
 \leq C_{\beta,\cW}\varepsilon^5t^{-5/2}.
 \label{eq:P-source-L1}
\end{align}
The first bound makes the right-hand side of \eqref{eq:P-derivative}
integrable in $L^2_\zeta$. The second bound and the Fourier
$L^1\to L^\infty$ estimate show that its $L^\infty_\zeta$ norm is integrable
in time. The map
\[
 t\longmapsto \cW(\cdot+\gamma t)\abs{u(t)}^4u(t)
\]
is continuous with values in $L^1\cap L^2$. The right-hand side of
\eqref{eq:P-derivative} is therefore Bochner integrable with values in both
$L^2_\zeta$ and $C_0(\R)$.
The $L^2$ and uniform limits agree almost everywhere, so their continuous
representatives coincide. Thus $\cP_u(t)$ has a common limit in both
spaces. Integrating \eqref{eq:P-derivative} from $T$ to infinity proves
\eqref{eq:localized-Duhamel-tail}. Integrating
\eqref{eq:P-source-L1} proves \eqref{eq:P-tail-Linfty}.

For the sharper $L^2$ rate, set
\[
 F(s,y)=\cW(y+\gamma s)\abs{u(s,y)}^4u(s,y).
\]
Plancherel's theorem and the dual form of the one-dimensional homogeneous
Strichartz estimate give
\begin{align*}
 \norm{\cP_{u,+}-\cP_u(T)}_2
 &\leq C\norm{F}_{L_s^{4/3}([T,\infty);L_y^1)}\\
 &\leq C\norm{\cW}_1\varepsilon^5
 \left(\int_T^\infty s^{-10/3}\dd s\right)^{3/4}
 \leq C_{\beta,\cW}\varepsilon^5T^{-7/4},
\end{align*}
which proves \eqref{eq:P-tail-L2}.
Here we used
\[
 \norm{e^{is\partial_y^2}\phi}_{L_s^4L_y^\infty}
 \leq C\norm\phi_2,
 \qquad
 \norm{F(s)}_1\leq\norm{\cW}_1\norm{u(s)}_\infty^5.
\]
\end{proof}

The terminal variable has complementary descriptions in terms of the
initial datum and the modified-scattering amplitude. The second description
isolates the convergent nonresonant cubic correction.

\begin{proposition}\label{prop:terminal-profile-characterization}
Let $u$, $A$, and $\cP_{u,+}$ be as in
\autoref{prop:exact-renormalized-tail}, and set $c_{\mathrm{sp}}=2\sqrt\pi\,e^{i\pi/4}$.
Then
\begin{equation}\label{eq:P-terminal-initial-data}
 \cP_{u,+}(\zeta)
 =e^{i\zeta^2}\widehat{u(1)}(\zeta)
 +i\int_1^\infty e^{is\zeta^2}
 \widehat{\cW(\cdot+\gamma s)\abs{u(s)}^4u(s)}(\zeta)\dd s.
\end{equation}
The integral converges absolutely in $L^2_\zeta\cap C_0(\R)$, and
\begin{equation}\label{eq:P-terminal-fifth-order}
 \norm{\cP_{u,+}-e^{i\zeta^2}\widehat{u(1)}}_2
 +\norm{\cP_{u,+}-e^{i\zeta^2}\widehat{u(1)}}_\infty
 \leq C_{\beta,\cW}\varepsilon^5.
\end{equation}
The terminal profile is also a cubic-order perturbation of the
stationary-phase image of $A$:
\begin{equation}\label{eq:P-terminal-cubic-comparison}
 \norm{\cP_{u,+}-c_{\mathrm{sp}}A(2\,\cdot)}_2
 +\norm{\cP_{u,+}-c_{\mathrm{sp}}A(2\,\cdot)}_\infty
 \leq C_{\beta,\cW}\varepsilon^3.
\end{equation}
Define
\begin{align}
 \Phi_A(t,\zeta)
 &:=c_{\mathrm{sp}}A(2\zeta)
 e^{-i\beta\abs{A(2\zeta)}^2\log t},
 \label{eq:Phi-A-Fourier}\\
 \cN_A^{\mathrm{res}}(t,\zeta)
 &:=t^{-1}\abs{A(2\zeta)}^2\Phi_A(t,\zeta).
 \notag
\end{align}
For every $0<\sigma<1-2\delta$ and $T\geq1$,
\begin{align}
 &\left\|\cP_{u,+}-c_{\mathrm{sp}}A(2\,\cdot)
 -i\beta\int_1^T\left[
 e^{is\zeta^2}\widehat{\abs{u(s)}^2u(s)}
 -\cN_A^{\mathrm{res}}(s)\right]\dd s\right\|_2\notag\\
 &\quad\leq C_{\beta,\cW}\varepsilon^5T^{-7/4}
 +C\varepsilon T^{-1/2+2\delta}
 +C_\sigma T^{-\sigma/2}
 \left(1+\abs\beta\norm A_\infty^2\log T\right)\norm A_{H^\sigma}.
 \label{eq:P-terminal-normal-form-finite}
\end{align}
The following integral converges absolutely in $L^2_\zeta$, for every $\beta\in\R$,
\begin{equation}\label{eq:P-terminal-normal-form}
 \cP_{u,+}=c_{\mathrm{sp}}A(2\,\cdot)
 +i\beta\lim_{T\to\infty}\int_1^T\left[
 e^{is\zeta^2}\widehat{\abs{u(s)}^2u(s)}
 -\cN_A^{\mathrm{res}}(s)\right]\dd s.
\end{equation}
Thus $\cP_{u,+}$ is the stationary-phase image of $A$ corrected by the
convergent nonresonant part of the cubic interaction. If $\beta=0$, then
\begin{equation}\label{eq:P-terminal-beta-zero}
 \cP_{u,+}(\zeta)=c_{\mathrm{sp}}A(2\zeta)\quad \text{for every } \zeta\in\R.
\end{equation}
 Finally,
\begin{equation}\label{eq:P-terminal-mass-comparison}
 \left|\norm{\cP_{u,+}}_2-\sqrt{2\pi}\norm A_2\right|
 \leq C_{\beta,\cW}\varepsilon^5.
\end{equation}
\end{proposition}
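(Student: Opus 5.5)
Three pieces of data drive the argument: the exact representation \eqref{eq:P-exact-representation}, the source bounds \eqref{eq:P-source-L2}--\eqref{eq:P-source-L1}, and the packet asymptotics of \autoref{thm:forward-scattering}.

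\textbf{Step 1: identities \eqref{eq:P-terminal-initial-data} and \eqref{eq:P-terminal-fifth-order}.} I let $t\to\infty$ in \eqref{eq:P-exact-representation}. The source is $O(\varepsilon^5 s^{-5/2})$ in $L^1_y\cap L^2_y$, so the integral converges absolutely in $L^2_\zeta\cap C_0$. This gives \eqref{eq:P-terminal-initial-data}. Integrating the same bound from $1$ to $\infty$ gives \eqref{eq:P-terminal-fifth-order}.

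\textbf{Step 2: finite-time comparison with the stationary-phase profile.} From \eqref{eq:exact-renormalized-profile} at time $T$,
\[
\cP_{u,+}=\cP_u(T)+\mathfrak D_{\cW}(T)=e^{iT\zeta^2}\widehat{u(T)}+i\beta\int_1^Te^{is\zeta^2}\widehat{\abs u^2u}\dd s+\mathfrak D_{\cW}(T).
\]
I write $\int_1^T\cN_A^{\mathrm{res}}(s)\dd s=\int_1^T s^{-1}\abs{A(2\zeta)}^2\Phi_A(s)\dd s$. Since $\partial_s\Phi_A=-i\beta\cN_A^{\mathrm{res}}$, this equals $\frac{i}{\beta}\bigl(\Phi_A(T)-c_{\mathrm{sp}}A(2\zeta)\bigr)$. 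Hence $i\beta\int_1^T\cN_A^{\mathrm{res}}=-\Phi_A(T)+c_{\mathrm{sp}}A(2\cdot)$, and the left side of \eqref{eq:P-terminal-normal-form-finite} becomes
\[
\bigl[e^{iT\zeta^2}\widehat{u(T)}-\Phi_A(T)\bigr]+\mathfrak D_{\cW}(T).
\]
The tail term is bounded by \eqref{eq:P-tail-L2}, which gives the $\varepsilon^5T^{-7/4}$ contribution. For the first bracket I use Plancherel. The main task is to show that the Fourier transform of the modified asymptotic state $u_{\mathrm{as}}(T,y)=T^{-1/2}e^{iy^2/(4T)}A(y/T)e^{-i\beta\abs{A(y/T)}^2\log T}$ is close to $e^{-iT\zeta^2}\Phi_A(T)$. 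By \eqref{eq:asymptotic-L2}, $u(T)$ is within $C\varepsilon T^{-1/2+2\delta}$ of $u_{\mathrm{as}}(T)$ in $L^2$. I factor $e^{iT\partial_y^2}$ as $\mathcal M\mathcal D\mathcal F\mathcal M$ with $\mathcal M=e^{iy^2/(4T)}$. Then $\widehat{u_{\mathrm{as}}}(T,\zeta)$ is, up to the phase $e^{-iT\zeta^2}$, the function $c_{\mathrm{sp}}\,e^{iT^{-1}\partial^2/4}$-type applied to $B_T(\xi):=A(\xi)e^{-i\beta\abs{A}^2\log T}$ and evaluated at $\xi=2\zeta$. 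The error $\norm{(e^{i\partial^2/(4T)}-1)B_T}_2\lesssim T^{-\sigma/2}\norm{B_T}_{H^\sigma}$ by the multiplier bound $\abs{e^{i\kappa^2/T}-1}\lesssim(\kappa^2/T)^{\sigma/2}$. Finally, $\norm{B_T}_{H^\sigma}\lesssim(1+\abs\beta\norm A_\infty^2\log T)\norm A_{H^\sigma}$ by a fractional chain rule, interpolating the trivial $L^2$ bound and the $H^1$ bound for the phase. This step, in particular justifying the fractional product estimate for the log-phase with $A$ only in $H^\sigma\cap L^\infty$, is the expected obstacle. If it fails I would first try a difference-quotient (Besov) characterization of $H^\sigma$, using $\abs{e^{ia}-e^{ib}}\le\abs{a-b}$ and $\abs{\abs A^2(\xi)-\abs A^2(\xi')}\le2\norm A_\infty\abs{A(\xi)-A(\xi')}$.

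\textbf{Step 3: consequences.} The bracketed integrand in \eqref{eq:P-terminal-normal-form} is the difference of two $L^2$-valued integrals up to $T$. The finite-$T$ estimate shows the truncated expressions converge in $L^2$ to $\cP_{u,+}-c_{\mathrm{sp}}A(2\cdot)$, which gives \eqref{eq:P-terminal-normal-form}. If $\beta=0$ the integral term vanishes, so equality holds in $L^2$. Both sides are continuous (using $\cP_{u,+}\in C_0$ and continuity of $A$), so \eqref{eq:P-terminal-beta-zero} holds pointwise.

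For \eqref{eq:P-terminal-cubic-comparison}, I return to Step 2 with $T$ fixed, say $T=2$. Alternatively, in $L^2$ I combine Step 1 with \autoref{prop:scattering-map-linearization}: that proposition gives $A=S_{\mathrm{lin}}u(1)+O(\varepsilon^3)$, and $c_{\mathrm{sp}}(S_{\mathrm{lin}}f)(2\zeta)=e^{i\zeta^2}\widehat f(\zeta)$ exactly. Together with \eqref{eq:P-terminal-fifth-order} this gives the $\varepsilon^3$ bound in both $L^2$ and $L^\infty$; this route is cleaner.

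For \eqref{eq:P-terminal-mass-comparison}, I use Plancherel, $\norm{e^{i\zeta^2}\widehat{u(1)}}_2=\sqrt{2\pi}\norm{u(1)}_2$. By mass conservation and \eqref{eq:asymptotic-L2}, $\norm{u(1)}_2=\lim\norm{u(T)}_2=\norm A_2$. Combining with \eqref{eq:P-terminal-fifth-order} gives the claim.
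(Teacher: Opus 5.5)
\textbf{Verdict: gap.} Most of your proposal follows the paper's proof: Step 1, the finite-$T$ identity, the stationary-phase comparison, the route to \eqref{eq:P-terminal-cubic-comparison} through \autoref{prop:scattering-map-linearization}, the $\beta=0$ case, and the mass comparison. Your fallback via the difference-quotient characterization of $H^\sigma$ is exactly what the paper uses. Three minor points:
\begin{itemize}
\item The exact identity is $e^{iT\zeta^2}\widehat{u_{\mathrm{as}}(T)}(\zeta)=c_{\mathrm{sp}}[e^{iT^{-1}\partial_\xi^2}B_T](2\zeta)$, without a factor $1/4$.
\item Instead of dividing by $\beta$, integrate $\partial_s\Phi_A=-i\beta\cN_A^{\mathrm{res}}$ directly. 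This gives $\Phi_A(T)+i\beta\int_1^T\cN_A^{\mathrm{res}}(s)\dd s=c_{\mathrm{sp}}A(2\,\cdot)$ for every $\beta$.
\item Your first suggested route to \eqref{eq:P-terminal-cubic-comparison}, fixing $T=2$ in Step 2, would only give $O(\varepsilon)$ in $L^2$ and no $L^\infty$ control. The route you settle on is the right one.
\end{itemize}

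The gap is in Step 3. The statement asserts that the nonresonant integral in \eqref{eq:P-terminal-normal-form} converges \emph{absolutely} in $L^2_\zeta$ for \emph{every} $\beta$, including $\beta=0$. Write $E_{\mathrm{nr}}(s)=e^{is\zeta^2}\widehat{\abs{u(s)}^2u(s)}-\cN_A^{\mathrm{res}}(s)$. Letting $T\to\infty$ in \eqref{eq:P-terminal-normal-form-finite} only shows that $i\beta\int_1^TE_{\mathrm{nr}}(s)\dd s$ has an $L^2$ limit.
\begin{itemize}
\item For $\beta\neq0$ this is only a conditional limit.
\item For $\beta=0$ it says nothing about the integral at all.
\end{itemize}
The finite-$T$ estimate cannot give more. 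It controls the integral only as a whole, through the endpoint cancellation between $e^{iT\zeta^2}\widehat{u(T)}$ and $\Phi_A(T)$. Each of the two terms of $E_{\mathrm{nr}}(s)$ is only $O(\varepsilon^3s^{-1})$ in $L^2_\zeta$, which is not integrable.

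What is missing is a cancellation estimate at each fixed time. You can get it by running your Step 2 argument on the cubic term at each time $s$:
\begin{itemize}
\item Replace $\abs u^2u$ by $\abs{u_{\mathrm{as}}}^2u_{\mathrm{as}}$. By \eqref{eq:global-decay} and \eqref{eq:asymptotic-L2}, the cost is $C(\norm{u(s)}_\infty^2+\norm{u_{\mathrm{as}}(s)}_\infty^2)\norm{u(s)-u_{\mathrm{as}}(s)}_2\leq C\varepsilon^3s^{-3/2+2\delta}$.
\item Note that $\abs{u_{\mathrm{as}}}^2u_{\mathrm{as}}=s^{-1}\cdot s^{-1/2}e^{iy^2/(4s)}C_s(y/s)$ with $C_s=\abs{B_s}^2B_s$. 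The same completing-the-square identity then gives $e^{is\zeta^2}\widehat{\abs{u_{\mathrm{as}}(s)}^2u_{\mathrm{as}}(s)}(\zeta)=s^{-1}c_{\mathrm{sp}}[e^{is^{-1}\partial_\xi^2}C_s](2\zeta)$.
\item Compare with $\cN_A^{\mathrm{res}}(s,\zeta)=s^{-1}c_{\mathrm{sp}}C_s(2\zeta)$. Apply the multiplier bound, and the difference-quotient estimate for the Lipschitz map $z\mapsto\abs z^2ze^{-i\beta\abs z^2\log s}$.
\end{itemize}
This yields
\[
\norm{E_{\mathrm{nr}}(s)}_{L^2_\zeta}\leq C\varepsilon^3s^{-3/2+2\delta}+C_\sigma\varepsilon^2\bigl(1+\abs\beta\varepsilon^2\log s\bigr)\norm A_{H^\sigma}s^{-1-\sigma/2},
\]
which is integrable on $[1,\infty)$. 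This is the paper's argument. With it, \eqref{eq:P-terminal-normal-form} holds as an absolutely convergent Bochner integral for every $\beta\in\R$, and your $\beta=0$ conclusion is unaffected.
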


\begin{proof}
Equation \eqref{eq:P-terminal-initial-data} follows by letting $t$ tend to
infinity in \eqref{eq:P-exact-representation}. The bounds
\eqref{eq:P-source-L2}--\eqref{eq:P-source-L1} show absolute convergence in
$L^2\cap C_0$ and prove \eqref{eq:P-terminal-fifth-order}.
The normalization in \eqref{eq:linear-scattering-map} gives
\[
 c_{\mathrm{sp}}
 \bigl(S_{\mathrm{lin}}u(1)\bigr)(2\zeta)
 =e^{i\zeta^2}\widehat{u(1)}(\zeta).
\]
Combining this identity with
\eqref{eq:scattering-map-linearization} and
\eqref{eq:P-terminal-fifth-order}, and using the change of variables
$\xi=2\zeta$, proves \eqref{eq:P-terminal-cubic-comparison} after decreasing
$\varepsilon_0$ so that $\varepsilon_0\leq1$.

For the stationary-phase normalization, put
\[
 B_t(\xi)=A(\xi)e^{-i\beta\abs{A(\xi)}^2\log t},\quad \text{ and }
\]
\[
 u_{\mathrm{as}}(t,y)
 =t^{-1/2}e^{iy^2/(4t)}B_t(y/t).
\]
Completing the square in the Fourier integral gives the exact identity
\begin{equation*}
 e^{it\zeta^2}\widehat{u_{\mathrm{as}}(t)}(\zeta)
 =c_{\mathrm{sp}}
 \left[e^{it^{-1}\partial_\xi^2}B_t\right](2\zeta).
\end{equation*}
For $0<\sigma<1$, the Fourier multiplier estimate gives
\[
 \norm{(e^{it^{-1}\partial_\xi^2}-1)B_t}_2
 \leq Ct^{-\sigma/2}\norm{B_t}_{H^\sigma}.
\]
The map $z\mapsto ze^{-i\beta\abs z^2\log t}$ is Lipschitz on
$\{\abs z\leq\norm A_\infty\}$, with Lipschitz constant at most
$1+C\abs\beta\norm A_\infty^2\log t$. The fractional difference-quotient
characterization of $H^\sigma$ therefore implies
\[
 \norm{B_t}_{H^\sigma}
 \leq C\left(1+\abs\beta\norm A_\infty^2\log t\right)\norm A_{H^\sigma}.
\]
Combining these estimates with \eqref{eq:asymptotic-L2} yields
\begin{align}
 \norm{e^{it\zeta^2}\widehat{u(t)}-\Phi_A(t)}_2\leq C\varepsilon t^{-1/2+2\delta}
 +C_\sigma t^{-\sigma/2}
 \left(1+\abs\beta\norm A_\infty^2\log t\right)\norm A_{H^\sigma}.
 \label{eq:Fourier-profile-stationary-limit}
\end{align}

By \eqref{eq:Phi-A-Fourier},
\[
 \partial_t\Phi_A(t)=-i\beta\cN_A^{\mathrm{res}}(t).
\]
Hence
\[
 \Phi_A(T)+i\beta\int_1^T\cN_A^{\mathrm{res}}(s)\dd s
 =\Phi_A(1)=c_{\mathrm{sp}}A(2\,\cdot).
\]
Subtracting this identity from the definition of $\cP_u(T)$ gives
\begin{align*}
 \cP_u(T)-c_{\mathrm{sp}}A(2\,\cdot)
 =e^{iT\zeta^2}\widehat{u(T)}-\Phi_A(T)+i\beta\int_1^T\left[
 e^{is\zeta^2}\widehat{\abs{u(s)}^2u(s)}
 -\cN_A^{\mathrm{res}}(s)\right]\dd s.
\end{align*}
The estimates for $\mathcal P_{u,+}-\mathcal P_u(T)$ and
$e^{iT\zeta^2}\widehat{u(T)}-\Phi_A(T)$ prove the asserted finite-$T$
bound. To justify the limiting identity for every $\beta\in\mathbb R$,
including $\beta=0$, we establish absolute integrability of the
nonresonant cubic remainder with values in $L^2_\zeta$.

Fix $0<\sigma<1-2\delta$ and, with $B_s$ and $u_{\mathrm{as}}$ as above,
set
\[
  C_s(\xi):=|B_s(\xi)|^2B_s(\xi)
  =|A(\xi)|^2A(\xi)e^{-i\beta|A(\xi)|^2\log s},
\]
and
\[
  E_{\mathrm{nr}}(s,\zeta)
  :=e^{is\zeta^2}\widehat{|u(s)|^2u(s)}(\zeta)
    -\mathcal N_A^{\mathrm{res}}(s,\zeta).
\]
Applying the same stationary-phase identity to the cubic leading state
gives, as an identity in $L^2_\zeta$,
\[
  e^{is\zeta^2}
  \widehat{|u_{\mathrm{as}}(s)|^2u_{\mathrm{as}}(s)}(\zeta)
  =\frac{c_{\mathrm{sp}}}{s}
    \bigl[e^{is^{-1}\partial_\xi^2}C_s\bigr](2\zeta),
  \qquad
  \mathcal N_A^{\mathrm{res}}(s,\zeta)
  =\frac{c_{\mathrm{sp}}}{s}C_s(2\zeta).
\]

On the disk $|z|\leq\|A\|_\infty$, the map
$z\mapsto |z|^2z e^{-i\beta|z|^2\log s}$ vanishes at zero and has
Lipschitz constant at most
$C\|A\|_\infty^2(1+|\beta|\|A\|_\infty^2\log s)$.
Since $0<\sigma<1$, the fractional difference-quotient
characterization of $H^\sigma$, together with
$\|A\|_\infty\leq C\varepsilon$, therefore yields
\[
  \|C_s\|_{H^\sigma}
  \leq C_\sigma\varepsilon^2
       (1+|\beta|\varepsilon^2\log s)\|A\|_{H^\sigma}.
\]
The decay and asymptotic estimates for $u$ also give
\begin{align*}
  \bigl\||u(s)|^2u(s)
       -|u_{\mathrm{as}}(s)|^2u_{\mathrm{as}}(s)\bigr\|_{L^2_y}
  &\leq C\bigl(\|u(s)\|_\infty^2
                  +\|u_{\mathrm{as}}(s)\|_\infty^2\bigr)
           \|u(s)-u_{\mathrm{as}}(s)\|_{L^2_y} \\
  &\leq C\varepsilon^3s^{-3/2+2\delta}.
\end{align*}
Consequently, Plancherel's theorem and the fractional Fourier
multiplier estimate imply
\begin{align*}
  \|E_{\mathrm{nr}}(s)\|_{L^2_\zeta}
  &\leq C\varepsilon^3s^{-3/2+2\delta}
       +\frac{C}{s}
        \bigl\|(e^{is^{-1}\partial_\xi^2}-1)C_s\bigr\|_{L^2_\xi} \\
  &\leq C\varepsilon^3s^{-3/2+2\delta}
       +C_\sigma\varepsilon^2\|A\|_{H^\sigma}
        (1+|\beta|\varepsilon^2\log s)s^{-1-\sigma/2}.
\end{align*}
Both terms are integrable on $[1,\infty)$ because
$\delta<1/16$ and $\sigma>0$. Thus
\[
  \int_1^\infty
  \|E_{\mathrm{nr}}(s)\|_{L^2_\zeta}\,ds<\infty,
\]
so the nonresonant cubic integral converges absolutely as an
$L^2_\zeta$-valued Bochner integral, including when $\beta=0$.
Letting $T\to\infty$ in the preceding finite-time identity now gives
\[
  \mathcal P_{u,+}
  =c_{\mathrm{sp}}A(2\cdot)
   +i\beta\int_1^\infty E_{\mathrm{nr}}(s,\cdot)\,ds.
\]
When $\beta=0$, the term multiplied by $\beta$ vanishes, and hence
$\mathcal P_{u,+}=c_{\mathrm{sp}}A(2\cdot)$ in $L^2_\zeta$.
Both sides have continuous representatives, so this equality also
holds pointwise.

Mass conservation and \eqref{eq:asymptotic-L2} imply
$\norm A_2=\norm{u(1)}_2$. By the value of $c_{\mathrm{sp}}$, the change of
variables $\xi=2\zeta$, and Plancherel's theorem,
\[
 \norm{c_{\mathrm{sp}}A(2\,\cdot)}_2
 =\sqrt{2\pi}\norm A_2
 =\norm{e^{i\zeta^2}\widehat{u(1)}}_2.
\]
The reverse triangle inequality and \eqref{eq:P-terminal-fifth-order}
give \eqref{eq:P-terminal-mass-comparison}.
\end{proof}

We record two invariance properties of the normalization.

\begin{remark}
\label{rem:P-normalization}
The terminal value $\cP_{u,+}$ is the limit of the exact interaction variable
\eqref{eq:exact-renormalized-profile}. Proposition
\ref{prop:terminal-profile-characterization} relates it to the
modified-scattering amplitude $A$ through a generally nonzero nonresonant
cubic correction. Thus no pointwise identity between
$\abs{\cP_{u,+}}$ and $\abs{A(2\,\cdot)}$ is asserted. Replacing the lower limit $1$ in
\eqref{eq:exact-renormalized-profile} by any fixed base time $t_0\geq1$ changes
$\cP_u(t)$ and $\cP_{u,+}$ by the same time-independent function, so the tail
$\cP_{u,+}-\cP_u(T)$ is unchanged. The construction is also gauge covariant.
If $u_\vartheta=e^{i\vartheta}u$, then
$\cP_{u_\vartheta}=e^{i\vartheta}\cP_u$.
\end{remark}

The tail concentrates near the distinguished frequency on the scale
$T^{-1/2}$.

\begin{theorem}
\label{thm:exact-boundary-layer}
Let $0<\varepsilon\leq\varepsilon_0$, and let $u$ be the solution furnished
by \autoref{thm:forward-scattering} for data satisfying
$\norm{u(1)}_\Sigma\leq\varepsilon$. Let $A$ be its modified scattering
profile. Set
\begin{equation*}
 \zeta_0=-\frac\gamma2,
 \qquad A_0=A(-\gamma),
 \qquad \alpha=\beta\abs{A_0}^2,
 \qquad M_0=\int_\R\cW(x)\dd x,
\end{equation*}
and define
\begin{equation}\label{eq:F-alpha}
 \mathfrak F_\alpha(\eta)
 :=\int_1^\infty \tau^{-5/2-i\alpha}e^{i\eta^2\tau}\dd\tau.
\end{equation}
Set $\delta=C_\delta\abs\beta\varepsilon^2$ as in
\eqref{eq:global-decay}, and set $a_*=\frac14-2\delta>0$.
Then, for every $R_{\mathrm{in}}>0$ and every $0\leq\kappa<1/2$, there is
$C_{R_{\mathrm{in}},\kappa}<\infty$ such that, for $T\geq2$,
\begin{equation}\label{eq:boundary-layer-limit}
 \sup_{\abs\eta\leq R_{\mathrm{in}}T^\kappa}
 \left|
 \begin{aligned}
 &T^{3/2}e^{i\alpha\log T}
 \left[\cP_{u,+}-\cP_u(T)\right]
 \left(\zeta_0+\frac{\eta}{\sqrt T}\right)\\
 &\qquad-iM_0\abs{A_0}^4A_0\,\mathfrak F_\alpha(\eta)
 \end{aligned}
 \right|
 \leq C_{R_{\mathrm{in}},\kappa}
 \left(T^{-a_*}+T^{-1/2+\kappa}\right).
\end{equation}
The constant $C_{R_{\mathrm{in}},\kappa}$ is independent of $T$ and may depend on the
fixed solution, $\beta$, and $\cW$. The inner frequency scale is
$T^{-1/2}$ and is centered at $\zeta=-\gamma/2$. The estimate is uniform on
the expanding $\eta$-windows in \eqref{eq:boundary-layer-limit}, which
correspond to $\zeta$-windows of width $O(T^{-1/2+\kappa})$. Since the
Schr\"odinger group velocity is $2\zeta$, the center corresponds to the
self-similar ray $\xi=-\gamma$. For $\kappa=0$, the convergence is uniform on
every fixed compact $\eta$-interval at rate $O(T^{-a_*})$.

Estimate \eqref{eq:boundary-layer-limit} is an absolute uniform
approximation for every $\kappa<1/2$. At the edge
$\abs\eta\asymp T^\kappa$, equation \eqref{eq:F-alpha-outer} shows that the
limiting profile has its natural size $T^{-2\kappa}$. The error is smaller
than the natural edge scale only when
\[
 2\kappa<\min\{a_*,1/2-\kappa\},
 \qquad\text{equivalently}\qquad
 \kappa<\frac18-\delta.
\]
Thus $\kappa<1/8-\delta$ is the effective range in which the estimate
resolves the leading profile at the edge. For
$1/8-\delta\leq\kappa<1/2$, it remains a uniform absolute estimate.
\end{theorem}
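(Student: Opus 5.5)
Starting from the exact tail identity \eqref{eq:localized-Duhamel-tail}, I write the Fourier transform of the localized source as an integral in $y$ and substitute $y=x-\gamma s$. This gives
\[
 \mathfrak D_{\cW}(T,\zeta)
 =i\int_T^\infty e^{is\zeta^2}\int_\R e^{-i\zeta(x-\gamma s)}\cW(x)\abs{u(s,x-\gamma s)}^4u(s,x-\gamma s)\dd x\dd s .
\]
Because $\cW$ is integrable with decay $\la x\ra^{-p}$, only $\abs x\lesssim s^{\theta}$ matters. There $(x-\gamma s)/s=-\gamma+x/s$, so the relevant velocity is $\xi=-\gamma+O(s^{\theta-1})$. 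I replace $u$ by the modified profile using the pointwise asymptotic in \autoref{thm:forward-scattering}:
\[
 u(s,y)=s^{-1/2}e^{iy^2/(4s)}A(y/s)e^{-i\beta\abs{A(y/s)}^2\log s}+O(\varepsilon s^{-3/4+2\delta}).
\]
The quintic source then becomes $s^{-5/2}\abs{A}^4A\,e^{-i\beta\abs A^2\log s}\,e^{iy^2/(4s)}$ evaluated at $y=x-\gamma s$, up to an error $O(s^{-5/2-1/4+2\delta})$ in $L^1_x$ weighted by $\abs{\cW}$. This error contributes $O(T^{-3/2-a_*})$ to $\mathfrak D_{\cW}$, which after multiplication by $T^{3/2}$ is the $T^{-a_*}$ term.

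Next I freeze $A(y/s)$ at $A_0$. The profile $A$ is Hölder continuous of some order close to $1/2$, since $A\in H^\varsigma$ with $\varsigma$ near $1$. I also need $A(-\gamma+x/s)-A_0$ to have size about $(\abs x/s)^{1/2}$. Combined with the weight $\la x\ra^{-p}$ and $p>2$, this gives an error of order $s^{-5/2-1/2+}$ times a logarithmic loss coming from the phase $\log s$. The logarithm appears because changing $\abs A^2$ in the phase costs $\abs\beta\log s$ times the difference. This error is also absorbed into $T^{-a_*}$. I will check this carefully, and I may use the $L^\infty$ packet rate in place of the Hölder bound.

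For the phase I expand with $\zeta=\zeta_0+\eta/\sqrt T$ and $y=x-\gamma s$. The total phase is
\[
 s\zeta^2-\zeta(x-\gamma s)+\frac{(x-\gamma s)^2}{4s}
 =s\Bigl(\zeta+\frac\gamma2\Bigr)^2-x\Bigl(\zeta+\frac\gamma2\Bigr)+\frac{x^2}{4s}
\]
after completing the square. The leading part is therefore $s\eta^2/T$. The remaining phase terms are $-x\eta/\sqrt T+x^2/(4s)$. For $\abs\eta\le R_{\mathrm{in}}T^\kappa$ and $\abs x$ bounded they are $O(T^{-1/2+\kappa})$, and the tails in $x$ are controlled by $\la x\ra^{-p}$ with $p>2$. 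Bounding $\abs{e^{i\phi}-1}\le\abs\phi$ and integrating $\int\abs x\abs{\cW}\dd x$ against $s^{-5/2}$ gives a contribution of order $T^{-3/2}\,T^{-1/2+\kappa}$, which is the $T^{-1/2+\kappa}$ term. The $x$-integral then collapses to $M_0$. What remains is
\[
 iM_0\abs{A_0}^4A_0\int_T^\infty s^{-5/2}e^{-i\alpha\log s}e^{is\eta^2/T}\dd s .
\]
Substituting $s=T\tau$ turns this exactly into $T^{-3/2}e^{-i\alpha\log T}\,iM_0\abs{A_0}^4A_0\,\mathfrak F_\alpha(\eta)$.

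The main obstacle is the freezing step: controlling $A(-\gamma+x/s)-A_0$ together with its logarithmic phase, uniformly, with only the regularity given by \eqref{eq:A-reg}. I would first try to work with the packet coefficient $\Gamma(s,\cdot)$, which is smooth at scale $s^{-1/2}$. Using \eqref{eq:packet-pointwise} and \eqref{eq:Gamma-derivative} with Sobolev embedding, I would bound $\abs{\Gamma(s,\xi)-\Gamma(s,-\gamma)}\lesssim \abs{\xi+\gamma}^{1/2}\varepsilon s^{\delta}$. I would then compare $\Gamma(s,-\gamma)$ with $A_0e^{-i\alpha\log s}$ using \eqref{eq:Gamma-asymptotic-inf}. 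That comparison gives exactly the rate $s^{-1/4+2\delta}$, which produces $a_*$.
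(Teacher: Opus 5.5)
Your proposal is correct and follows the paper's proof essentially step by step: the exact tail identity, replacement of $u$ by the modified asymptotic state at cost $O(T^{-a_*})$, the completed-square phase identity, freezing $A$ at $A_0$ via the H\"older bound from \eqref{eq:A-reg}, and the rescaling $s=T\tau$. The paper takes H\"older exponent $\theta_A=3/8-2\delta$, so the $\log T$ loss is absorbed because $\theta_A-a_*=1/8$, and the packet-coefficient fallback in your last paragraph is not needed. The one point to tighten is the quadratic phase $x^2/(4s)$. As written, the bound $|e^{i\phi}-1|\le|\phi|$ used only with $\int|x||\cW|\dd x$ does not control this phase, since that would require $\int x^2|\cW|\dd x<\infty$, which is guaranteed only for $p>3$. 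Either split at $|x|\sim\sqrt s$, or use $|e^{iz}-1|\le C|z|^{\theta}$ with $2\theta<1$ as the paper does; either way the error lies within the stated bound.
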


\begin{proof}
By \eqref{eq:localized-Duhamel-tail}, the profile difference in
\eqref{eq:boundary-layer-limit} is exactly $\mathfrak D_{\cW}(T,\zeta)$. It
therefore remains to analyze the integral on the right-hand side of
\eqref{eq:localized-Duhamel-tail}.

Fix $R_{\mathrm{in}}>0$ and $0\leq\kappa<1/2$. Set
\begin{equation*}
 \theta_A=\frac38-2\delta,
 \qquad
 s_A=\frac{15}{16}-2\delta.
\end{equation*}
Since $\delta<1/16$, these exponents satisfy
\[
 0<\theta_A<\frac12,
 \qquad
 \frac12<s_A<1-2\delta,
 \qquad
 a_*<\theta_A<s_A-\frac12.
\]
By \eqref{eq:A-reg}, $A\in H^{s_A}\hookrightarrow C^{0,\theta_A}$.
Set $\delta_1=2\delta$. Write the physical asymptotic expansion from
\autoref{thm:forward-scattering} as
\begin{equation}\label{eq:u-asymp-section4}
 u(s,y)=s^{-1/2}e^{iy^2/(4s)}
 A(y/s)e^{-i\beta\abs{A(y/s)}^2\log s}
 +\mathcal E_{\mathrm{as}}(s,y),
\end{equation}
where
\begin{equation}\label{eq:Eas-section4}
 \norm{\mathcal E_{\mathrm{as}}(s)}_\infty
 \leq C\varepsilon s^{-3/4+\delta_1},
 \qquad \delta_1<\frac18.
\end{equation}

Set
\[
 \zeta=\zeta_0+\varrho,
 \qquad y=x-\gamma s.
\]
The quadratic phases satisfy the exact identity
\begin{equation}\label{eq:phase-identity-section4}
 s\zeta^2-\zeta y+\frac{y^2}{4s}
 =s\varrho^2-\varrho x+\frac{x^2}{4s}.
\end{equation}
Let
\[
 U_{\mathrm{as}}(s,y)
 :=s^{-1/2}e^{iy^2/(4s)}
 A(y/s)e^{-i\beta\abs{A(y/s)}^2\log s}.
\]
The polynomial Lipschitz bound for $z\mapsto\abs{z}^4z$, together with
\eqref{eq:u-asymp-section4}, \eqref{eq:Eas-section4}, and
\eqref{eq:global-decay}, gives
\begin{equation*}
 \left|\abs{u}^4u-\abs{U_{\mathrm{as}}}^4U_{\mathrm{as}}\right|
 \leq C\varepsilon^5s^{-11/4+\delta_1}.
\end{equation*}
Therefore its contribution to \eqref{eq:localized-Duhamel-tail} is bounded
uniformly in $\zeta$ by
\begin{equation}\label{eq:Duhamel-error-section4}
 C\varepsilon^5\norm{\cW}_1
 \int_T^\infty s^{-11/4+\delta_1}\dd s
 \leq C\varepsilon^5T^{-7/4+\delta_1}.
\end{equation}
After multiplication by $T^{3/2}$ this is
$O(T^{-1/4+\delta_1})=O(T^{-a_*})$.

It remains to evaluate the contribution of $U_{\mathrm{as}}$. Define
$\widetilde A_s(x)=A(-\gamma+x/s)$. Equation
\eqref{eq:phase-identity-section4} gives
\begin{align}
 \mathfrak D_{\cW}^{\mathrm{as}}(T,\zeta_0+\varrho)
 &:=i\int_T^\infty s^{-5/2}e^{is\varrho^2}
 \int_\R \cW(x)e^{-i\varrho x}e^{ix^2/(4s)}
 \abs{\widetilde A_s(x)}^4\widetilde A_s(x)
 e^{-i\beta\abs{\widetilde A_s(x)}^2\log s}\dd x\dd s.
 \label{eq:D-as-exact}
\end{align}
Now set
\[
 \varrho=\frac\eta{\sqrt T},
 \qquad s=T\tau.
\]
Multiplying \eqref{eq:D-as-exact} by $T^{3/2}e^{i\alpha\log T}$ yields
\begin{align}
 &i\int_1^\infty \tau^{-5/2}e^{i\eta^2\tau}
 \int_\R \cW(x)e^{-i\eta x/\sqrt T}e^{ix^2/(4T\tau)}
 \abs{\widetilde A_{T\tau}(x)}^4\widetilde A_{T\tau}(x)\notag\\
 &\qquad\qquad\times
 \exp\left(
 i\alpha\log T-i\beta\abs{\widetilde A_{T\tau}(x)}^2\log(T\tau)
 \right)\dd x\dd\tau.
 \label{eq:scaled-D-as}
\end{align}
The exponent in the final factor of \eqref{eq:scaled-D-as} decomposes as
\begin{align*}
 &i\alpha\log T-i\beta
 \abs{\widetilde A_{T\tau}(x)}^2\log(T\tau)\\
 &\qquad=-i\alpha\log\tau
 -i\beta\left(\abs{\widetilde A_{T\tau}(x)}^2-\abs{A_0}^2\right)
 \log(T\tau).
\end{align*}
After factoring out $e^{i\eta^2\tau}$ in \eqref{eq:scaled-D-as}, define
\begin{align*}
 B_T(\tau,x,\eta)
 &=e^{-i\eta x/\sqrt T}e^{ix^2/(4T\tau)}
 \abs{\widetilde A_{T\tau}(x)}^4\widetilde A_{T\tau}(x)\\
 &\quad\times\exp\left(
 i\alpha\log T-i\beta
 \abs{\widetilde A_{T\tau}(x)}^2\log(T\tau)
 \right).
\end{align*}
Since $0<\theta_A<1/2$ and $p>2$,
the decay assumption on $\cW$ implies
\begin{equation}\label{eq:W-moments-boundary}
 \int_\R\left(1+\abs{x}+\abs{x}^{2\theta_A}\right)
 \abs{\cW(x)}\dd x<\infty.
\end{equation}
For $\abs\eta\leq R_{\mathrm{in}}T^\kappa$, the bounds
$\abs{e^{iz}-1}\leq\abs z$ and
$\abs{e^{iz}-1}\leq C_{\theta_A}\abs z^{\theta_A}$ give
\begin{align*}
 \abs{e^{-i\eta x/\sqrt T}-1}
 &\leq R_{\mathrm{in}}T^{-1/2+\kappa}\abs{x},\\
 \abs{e^{ix^2/(4T\tau)}-1}
 &\leq C_{\theta_A}T^{-\theta_A}\tau^{-\theta_A}
 \abs{x}^{2\theta_A}.
\end{align*}
Let $K_A$ be a global $\theta_A$-H\"older constant for $A$. Then
\[
 \abs{\widetilde A_{T\tau}(x)-A_0}
 \leq K_AT^{-\theta_A}\tau^{-\theta_A}\abs{x}^{\theta_A}.
\]
Using $\abs{e^{iz}-1}\leq\abs z$ once more in the nonlinear phase,
we obtain
\begin{align}
 &\sup_{\abs\eta\leq R_{\mathrm{in}}T^\kappa}
 \abs{B_T(\tau,x,\eta)
 -\abs{A_0}^4A_0\tau^{-i\alpha}}\notag\\
 &\quad\leq C_{R_{\mathrm{in}},\kappa}\left[
 T^{-1/2+\kappa}\abs{x}
 +T^{-\theta_A}\tau^{-\theta_A}\abs{x}^{2\theta_A}
 +T^{-\theta_A}\tau^{-\theta_A}\abs{x}^{\theta_A}
 \bigl(1+\log(T\tau)\bigr)
 \right].
 \label{eq:BT-quantitative}
\end{align}
Here $C_{R_{\mathrm{in}},\kappa}$ also absorbs $K_A$, $\norm{A}_\infty$, and
$\abs\beta$. Multiplying \eqref{eq:BT-quantitative} by
$\tau^{-5/2}\abs{\cW(x)}$ and integrating, using
\eqref{eq:W-moments-boundary}, proves
\begin{align}
 &\sup_{\abs\eta\leq R_{\mathrm{in}}T^\kappa}\left|
 T^{3/2}e^{i\alpha\log T}
 \mathfrak D_{\cW}^{\mathrm{as}}
 \left(T,\zeta_0+\frac\eta{\sqrt T}\right)
 -iM_0\abs{A_0}^4A_0\mathfrak F_\alpha(\eta)
 \right|\notag\\
 &\qquad\leq C_{R_{\mathrm{in}},\kappa}\left[T^{-1/2+\kappa}
 +T^{-\theta_A}(1+\log T)\right].
 \label{eq:D-as-quantitative}
\end{align}
Combining \eqref{eq:D-as-quantitative} with the scaled version of
\eqref{eq:Duhamel-error-section4}, we obtain the errors
$O(T^{-1/2+\kappa})$, $O(T^{-\theta_A}(1+\log T))$, and
$O(T^{-a_*})$. Since
$\theta_A-a_*=1/8$, the logarithmic term is bounded by $CT^{-a_*}$ for
$T\geq2$. This proves \eqref{eq:boundary-layer-limit}.
\end{proof}

\begin{corollary}\label{cor:profile-tail-sharpness}
Under the hypotheses of \autoref{thm:exact-boundary-layer}, suppose that
$M_0A_0\neq0$. There are constants $c_2,c_\infty>0$ and $T_*>0$, depending
on the fixed solution, such that
\begin{align*}
 \norm{\cP_{u,+}-\cP_u(T)}_2
 &\geq c_2\abs{M_0}\abs{A_0}^5T^{-7/4},\\
 \norm{\cP_{u,+}-\cP_u(T)}_\infty
 &\geq c_\infty\abs{M_0}\abs{A_0}^5T^{-3/2}
\end{align*}
for every $T\geq T_*$. Thus both powers in
\eqref{eq:P-tail-L2}--\eqref{eq:P-tail-Linfty} are optimal on the
nontrivial subclass $M_0A_0\neq0$.
\end{corollary}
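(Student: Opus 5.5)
The plan is to read both lower bounds off the inner scaling limit in \autoref{thm:exact-boundary-layer} with $\kappa=0$, using that the limiting shape $\mathfrak F_\alpha$ does not vanish identically. Fix $R_{\mathrm{in}}=1$. By \eqref{eq:localized-Duhamel-tail}, $\mathfrak D_{\cW}(T,\cdot)=\cP_{u,+}-\cP_u(T)$, and \eqref{eq:boundary-layer-limit} gives, uniformly for $\abs\eta\leq1$,
\[
 \left|T^{3/2}\mathfrak D_{\cW}\left(T,\zeta_0+\frac{\eta}{\sqrt T}\right)\right|
 \geq\abs{M_0}\abs{A_0}^5\abs{\mathfrak F_\alpha(\eta)}
 -C_{1,0}\left(T^{-a_*}+T^{-1/2}\right).
\]
Here I used $\abs{e^{i\alpha\log T}}=1$.

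The only nontrivial input is a positive lower bound for $\abs{\mathfrak F_\alpha}$ on a fixed set. At $\eta=0$ we have
\[
 \mathfrak F_\alpha(0)=\int_1^\infty\tau^{-5/2-i\alpha}\dd\tau=\frac{1}{3/2+i\alpha}\neq0.
\]
Dominated convergence, with the integrable majorant $\tau^{-5/2}$, shows that $\mathfrak F_\alpha$ is continuous in $\eta$. Hence there is $\eta_1\in(0,1]$, depending only on $\alpha$ and thus on the fixed solution, with
\[
 \abs{\mathfrak F_\alpha(\eta)}\geq\frac12\abs{\mathfrak F_\alpha(0)}=:2c_0>0
 \qquad\text{for }\abs\eta\leq\eta_1.
\]
Choose $T_*$ so that $C_{1,0}(T^{-a_*}+T^{-1/2})\leq c_0\abs{M_0}\abs{A_0}^5$ for $T\geq T_*$. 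This is possible because $M_0A_0\neq0$ and $a_*>0$. Then for $T\geq T_*$ and $\abs\eta\leq\eta_1$,
\[
 T^{3/2}\abs{\mathfrak D_{\cW}(T,\zeta_0+\eta T^{-1/2})}\geq c_0\abs{M_0}\abs{A_0}^5.
\]

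Taking $\eta=0$ gives the $L^\infty$ bound with $c_\infty=c_0$. For the $L^2$ bound, integrate the squared pointwise bound over the interval $\abs{\zeta-\zeta_0}\leq\eta_1T^{-1/2}$, which has length $2\eta_1T^{-1/2}$. This gives
\[
 \norm{\mathfrak D_{\cW}(T)}_2^2\geq2\eta_1T^{-1/2}\,c_0^2\abs{M_0}^2\abs{A_0}^{10}T^{-3},
\]
so $\norm{\mathfrak D_{\cW}(T)}_2\geq c_2\abs{M_0}\abs{A_0}^5T^{-7/4}$ with $c_2=c_0\sqrt{2\eta_1}$.

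The continuous representative of $\mathfrak D_{\cW}(T)$ is the one furnished by \autoref{prop:exact-renormalized-tail}, so the pointwise bound legitimately controls the $L^\infty$ norm. Comparing these lower bounds with \eqref{eq:P-tail-L2}--\eqref{eq:P-tail-Linfty} shows that both rates are optimal.

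The step that needs care is the uniform lower bound on $\abs{\mathfrak F_\alpha}$ near zero, together with absorbing the approximation error. Both are handled by continuity at $\eta=0$ and the explicit value $\mathfrak F_\alpha(0)=(3/2+i\alpha)^{-1}$, so I expect no real obstacle.
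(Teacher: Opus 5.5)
Your proposal is correct and follows the paper's proof essentially step for step. For the $L^\infty$ bound you evaluate the $\kappa=0$ boundary-layer estimate at $\eta=0$, using $\mathfrak F_\alpha(0)=(3/2+i\alpha)^{-1}\neq0$. For the $L^2$ bound you integrate a uniform lower bound over the window $\abs{\zeta-\zeta_0}\leq\eta_1T^{-1/2}$, obtained from continuity of $\mathfrak F_\alpha$ near zero. Your explicit choice of $c_0$, $\eta_1$ and $T_*$ makes the paper's constants concrete; take $T_*\geq2$ so that \autoref{thm:exact-boundary-layer} applies.
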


\begin{proof}
Since
\[
 \mathfrak F_\alpha(0)=\frac1{\frac32+i\alpha},
\]
\eqref{eq:boundary-layer-limit} at $\eta=0$ gives
\[
 T^{3/2}e^{i\alpha\log T}\mathfrak D_{\cW}(T,\zeta_0)
 =\frac{iM_0\abs{A_0}^4A_0}{\frac32+i\alpha}+o(1).
\]
This proves the $L^\infty$ lower bound. For the $L^2$ bound, continuity
and nonvanishing of $\mathfrak F_\alpha$ at zero give $r>0$ such that
$\abs{\mathfrak F_\alpha(\eta)}$ is bounded below for $\abs\eta\leq r$.
The uniform convergence in \eqref{eq:boundary-layer-limit} with
$\kappa=0$, followed by the change of variables
$\zeta=\zeta_0+\eta/\sqrt T$, yields
\[
 \int_{\abs{\zeta-\zeta_0}\leq r/\sqrt T}
 \abs{\mathfrak D_{\cW}(T,\zeta)}^2\dd\zeta
 \geq c\abs{M_0}^2\abs{A_0}^{10}T^{-7/2}
\]
for all sufficiently large $T$. Taking square roots proves the claim.
\end{proof}

\begin{remark}
\label{rem:boundary-layer-scope}
By \eqref{eq:localized-Duhamel-tail}, the tail of $\cP_u$ consists exactly
of the localized quintic Duhamel term. A phase-only renormalization leaves
cubic corrections that need not be $O(T^{-3/2})$ under the
present hypotheses. Thus \eqref{eq:boundary-layer-limit} is not asserted for
that profile. \autoref{prop:F-alpha-regularity} concerns the
limiting inner profile. The later
\autoref{thm:terminal-profile-peano-singularity} gives a separate statement
about the terminal profile itself under stronger decay and data regularity.
\end{remark}

When $\beta=0$, the exact profile reduces to the ordinary free interaction
profile.

\begin{corollary}
\label{cor:beta-zero-full-profile}
Assume the hypotheses and notation of
\autoref{thm:exact-boundary-layer}. If $\beta=0$, then
\[
 \cP_u(t,\zeta)=e^{it\zeta^2}\widehat u(t,\zeta).
\]
Consequently, \eqref{eq:boundary-layer-limit} is the inner scaling limit
for the full tail of the ordinary free interaction profile, with
$\alpha=0$.
\end{corollary}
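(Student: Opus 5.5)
The identity follows by setting $\beta=0$ directly in \autoref{def:exact-renormalized-profile}. The second summand of \eqref{eq:exact-renormalized-profile} carries the factor $i\beta$. For each finite $t$, the integral it multiplies is a Bochner integral in $L^2_\zeta\cap C_0(\R)$, as established in the proof of \autoref{prop:exact-renormalized-tail}. That term therefore vanishes identically, both as an element of $L^2_\zeta$ and for the continuous representative. This gives $\cP_u(t,\zeta)=e^{it\zeta^2}\widehat u(t,\zeta)$ for every $t\geq1$ and every $\zeta$, since both sides are continuous in $\zeta$ because $u(t)\in\Sigma\subset L^1$.

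For the consequence, I would first note that with $\beta=0$ one has $\alpha=\beta\abs{A_0}^2=0$. The phase factor $e^{i\alpha\log T}$ in \eqref{eq:boundary-layer-limit} then equals $1$, and the limiting shape is $\mathfrak F_0(\eta)=\int_1^\infty\tau^{-5/2}e^{i\eta^2\tau}\dd\tau$.

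Next, I would pass to the limit $t\to\infty$ in the identity just proved. By \autoref{prop:exact-renormalized-tail}, $\cP_{u,+}=\lim_{t\to\infty}e^{it\zeta^2}\widehat u(t,\zeta)$, with convergence both in $L^2$ and uniformly. So $\cP_{u,+}$ is the scattering state of the ordinary free interaction profile. Consistently with this, \eqref{eq:P-terminal-beta-zero} identifies it with $c_{\mathrm{sp}}A(2\,\cdot)$. The tail $\cP_{u,+}-\cP_u(T)$ is therefore the full tail of $e^{it\zeta^2}\widehat u(t)$, and by \eqref{eq:localized-Duhamel-tail} it is given entirely by the localized quintic Duhamel integral.

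Finally, \autoref{thm:exact-boundary-layer} applies verbatim, because its hypotheses place no restriction on $\beta$. Also $\delta=C_\delta\abs\beta\varepsilon^2=0$, so $a_*=1/4$. Substituting these values yields the stated inner scaling limit for the ordinary free profile. No step presents a real obstacle. The only point needing care is the pointwise-versus-$L^2$ representative issue, and it is already handled by the continuity statements in \autoref{prop:exact-renormalized-tail}.
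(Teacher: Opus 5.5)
Your proposal is correct and matches the paper, which treats this corollary as immediate from \eqref{eq:exact-renormalized-profile}: setting $\beta=0$ removes the cubic counterterm, so $\cP_u$ is the free interaction profile, and \autoref{thm:exact-boundary-layer} then applies with $\alpha=\beta\abs{A_0}^2=0$ (and $\delta=0$). Your remarks on continuous representatives and on \eqref{eq:P-terminal-beta-zero} are harmless consistency checks rather than needed steps.
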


The limiting profile has a precise nonanalytic expansion at the center of
the inner scale.

\begin{proposition}\label{prop:F-alpha-regularity}
For every $\alpha\in\R$, the function $\mathfrak F_\alpha$ from
\eqref{eq:F-alpha} satisfies
\[
 \mathfrak F_\alpha\in C^{2,1}_{\mathrm{loc}}(\R)
 \cap C^\infty(\R\setminus\{0\}),
 \qquad
 \mathfrak F_\alpha\notin C^3(\R).
\]
With the complex-power convention introduced above, the following
expansion holds near $\eta=0$. In particular,
$\abs\eta^{3+2i\alpha}$ is assigned the value zero at $\eta=0$.
\begin{equation}\label{eq:F-alpha-expansion}
 \mathfrak F_\alpha(\eta)
 =\frac{1}{\frac32+i\alpha}
 +\frac{i}{\frac12+i\alpha}\eta^2
 +C_\alpha\abs{\eta}^{3+2i\alpha}
 +\eta^4G_\alpha(\eta^2),
\end{equation}
where $C_\alpha$ is defined in \eqref{eq:C-alpha} and is nonzero.
Here $G_\alpha\in C^\infty([0,\infty))$. Moreover,
\begin{equation}\label{eq:F-alpha-outer}
 \mathfrak F_\alpha(\eta)
 =\frac{i e^{i\eta^2}}{\eta^2}+O_\alpha(\abs\eta^{-4}),
 \qquad \abs\eta\to\infty.
\end{equation}
\end{proposition}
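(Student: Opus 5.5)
The plan is to rescale the oscillatory integral so that the whole $\eta$-dependence sits in the lower limit, and then read off the expansion by subtracting Taylor polynomials. For $\eta\neq0$, substitute $\sigma=\eta^2\tau$ in \eqref{eq:F-alpha}:
\[
 \mathfrak F_\alpha(\eta)=\abs\eta^{3+2i\alpha}\int_{\eta^2}^\infty\sigma^{-5/2-i\alpha}e^{i\sigma}\dd\sigma .
\]
The integral converges conditionally at infinity, since one integration by parts against $e^{i\sigma}$ makes it absolutely convergent. Smoothness on $\R\setminus\{0\}$ is then immediate: the lower limit $\eta^2$ is smooth and positive there, and $\abs\eta^{3+2i\alpha}$ is smooth away from $0$.

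For the expansion near $0$, write $e^{i\sigma}=1+i\sigma+(e^{i\sigma}-1-i\sigma)$ and split $\int_{\eta^2}^\infty=\int_0^\infty-\int_0^{\eta^2}$ on the remainder part only. The pieces behave as follows.
\begin{itemize}
\item[(a)] The first two terms integrate exactly on $(\eta^2,\infty)$, with $\Re(-3/2-i\alpha)<0$ and $\Re(-1/2-i\alpha)<0$. Multiplied by $\abs\eta^{3+2i\alpha}$, they give $\frac{1}{3/2+i\alpha}+\frac{i}{1/2+i\alpha}\eta^2$, since $(\eta^2)^{-3/2-i\alpha}\abs\eta^{3+2i\alpha}=1$.
\item[(b)] The full remainder integral over $(0,\infty)$ is exactly $C_\alpha$. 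It converges because the integrand is $O(\sigma^{-1/2})$ at $0$ and $O(\sigma^{-3/2})$ at $\infty$.
\item[(c)] The piece $-\abs\eta^{3+2i\alpha}\int_0^{\eta^2}\sigma^{-5/2-i\alpha}(e^{i\sigma}-1-i\sigma)\dd\sigma$ is handled by expanding $e^{i\sigma}-1-i\sigma=\sum_{k\geq2}(i\sigma)^k/k!$ and integrating termwise. This gives $-\sum_{k\ge2}\frac{i^k}{k!}\frac{(\eta^2)^{k-3/2-i\alpha}}{k-3/2-i\alpha}$, and the factor $\abs\eta^{3+2i\alpha}$ cancels the complex power exactly. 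The result is $\eta^4G_\alpha(\eta^2)$ with $G_\alpha(w)=-\sum_{k\ge2}\frac{i^k w^{k-2}}{k!(k-3/2-i\alpha)}$, an entire function, hence smooth on $[0,\infty)$.
\end{itemize}
This yields \eqref{eq:F-alpha-expansion}. To match the convention at $\eta=0$, check directly that $\mathfrak F_\alpha(0)=1/(3/2+i\alpha)$.

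The regularity claims follow from the expansion. The even polynomial terms and $\eta^4G_\alpha(\eta^2)$ are $C^\infty$, so everything reduces to $h(\eta)=\abs\eta^{3+2i\alpha}$. Its derivatives are $h'=(3+2i\alpha)\abs\eta^{2+2i\alpha}\operatorname{sgn}\eta$, then $h''=(3+2i\alpha)(2+2i\alpha)\abs\eta^{1+2i\alpha}$, and $h'''$ is a nonzero constant times $\abs\eta^{2i\alpha}\operatorname{sgn}\eta$ for $\eta\neq0$. Each of $h,h',h''$ vanishes at $0$ and tends to $0$ there, so $h\in C^2$.
\begin{itemize}
\item[(d)] $h''$ is locally Lipschitz because $h'''$ is bounded near $0$ and $h''$ is continuous there.
\item[(e)] $h'''$ has no continuous extension to $0$. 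When $\alpha=0$ it jumps sign across the origin. When $\alpha\neq0$ the factor $e^{2i\alpha\log\abs\eta}$ oscillates without a limit. In both cases the coefficient $(3+2i\alpha)(2+2i\alpha)(1+2i\alpha)\neq0$.
\end{itemize}
Since $C_\alpha\neq0$ and the other terms are $C^\infty$, it follows that $\mathfrak F_\alpha\in C^{2,1}_{\mathrm{loc}}$ but $\mathfrak F_\alpha\notin C^3$.

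The large-$\eta$ asymptotic \eqref{eq:F-alpha-outer} comes from integrating by parts twice in \eqref{eq:F-alpha}, writing $e^{i\eta^2\tau}=(i\eta^2)^{-1}\partial_\tau e^{i\eta^2\tau}$. The boundary term at $\tau=1$ is $-\frac{e^{i\eta^2}}{i\eta^2}=\frac{ie^{i\eta^2}}{\eta^2}$. The next boundary term and the remaining absolutely convergent integral are $O(\eta^{-4})$, because $\partial_\tau^2\tau^{-5/2-i\alpha}$ is integrable on $[1,\infty)$.

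The main obstacle is proving $C_\alpha\neq0$. My first attempt is to evaluate $C_\alpha$ in closed form by rotating the contour to $\sigma=it$, $t>0$, which is legitimate because the integrand decays in the first quadrant. The relevant analytic continuation is $\int_0^\infty\sigma^{s-1}(e^{i\sigma}-1-i\sigma)\dd\sigma=\Gamma(s)e^{i\pi s/2}$ for $-2<\Re s<-1$. This should be justified by integrating by parts twice, reducing it to the standard $\int_0^\infty\sigma^{s+1}e^{i\sigma}\dd\sigma$ with $-1<\Re(s+2)<0$ after one more step, or by checking consistency with the strip formula. Here $s=-3/2-i\alpha$, so $C_\alpha=\Gamma(-3/2-i\alpha)e^{i\pi(-3/2-i\alpha)/2}$. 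This is nonzero because $\Gamma$ has no zeros and $-3/2-i\alpha$ is not a pole.

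If the continuation argument proves delicate, the fallback is direct. Integrate by parts twice to get $C_\alpha=c(\alpha)\int_0^\infty\sigma^{-1/2-i\alpha}e^{i\sigma}\dd\sigma$ with $c(\alpha)=\frac{-1}{(3/2+i\alpha)(1/2+i\alpha)}$, boundary terms vanishing. Then use the classical formula $\int_0^\infty\sigma^{-1/2-i\alpha}e^{i\sigma}\dd\sigma=\Gamma(1/2-i\alpha)e^{i\pi(1/2-i\alpha)/2}$, which is nonzero.
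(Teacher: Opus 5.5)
Your proposal is correct and follows essentially the paper's route: the Taylor subtraction and rescaling are the same; your power series for $G_\alpha$ is the termwise expansion of the paper's $-\int_0^1 z^{-1/2-i\alpha}\mathfrak g(\upsilon z)\dd z$; and your fallback for $C_\alpha\neq0$ (two integrations by parts, then $\int_0^\infty\sigma^{-1/2-i\alpha}e^{i\sigma}\dd\sigma=e^{i\pi(1/2-i\alpha)/2}\Gamma_{\mathrm E}(1/2-i\alpha)$) is exactly the paper's argument, while your contour rotation is a valid equivalent alternative. Two harmless slips: the rescaled integral converges absolutely at infinity, since $\Re(-5/2-i\alpha)=-5/2$, and the reduced gamma integral has $0<\Re(s+2)<1$, not $-1<\Re(s+2)<0$.
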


\begin{proof}
For $\eta\neq0$, repeated integration by parts in $\tau$ shows that
$\mathfrak F_\alpha$ is smooth. Put $\upsilon=\eta^2\geq0$. Subtracting the
first two Taylor terms of $e^{i\upsilon\tau}$ gives the following identity.
For $\upsilon>0$, all complex powers below use the real logarithm. They are
extended by zero at $\upsilon=0$ when their real exponent is positive. We have
\begin{align*}
 \mathfrak F_\alpha(\eta)
 -\int_1^\infty\tau^{-5/2-i\alpha}\dd\tau
 -i\upsilon\int_1^\infty\tau^{-3/2-i\alpha}\dd\tau
 &=\int_1^\infty\tau^{-5/2-i\alpha}
 \left(e^{i\upsilon\tau}-1-i\upsilon\tau\right)\dd\tau\\
 &=\upsilon^{3/2+i\alpha}
 \int_\upsilon^\infty\sigma^{-5/2-i\alpha}
 \left(e^{i\sigma}-1-i\sigma\right)\dd\sigma.
\end{align*}
Set
\[
 \mathfrak g(\sigma)=
 \begin{cases}
 \dfrac{e^{i\sigma}-1-i\sigma}{\sigma^2},&\sigma\neq0,\\[4pt]
 -\dfrac12,&\sigma=0.
 \end{cases}
\]
Then $\mathfrak g$ is smooth. Denote the last integral by
$J_\alpha(\upsilon)$. It
satisfies the exact identity
\begin{align*}
 J_\alpha(\upsilon)-C_\alpha
 =-\int_0^\upsilon\sigma^{-1/2-i\alpha}\mathfrak g(\sigma)\dd\sigma
 =-\upsilon^{1/2-i\alpha}
 \int_0^1z^{-1/2-i\alpha}
 \mathfrak g(\upsilon z)\dd z.
\end{align*}
Consequently \eqref{eq:F-alpha-expansion} holds with
\begin{equation*}
 G_\alpha(\upsilon)
 =-\int_0^1z^{-1/2-i\alpha}
 \mathfrak g(\upsilon z)\dd z,
\end{equation*}
which is smooth for $\upsilon\geq0$. We next prove that $C_\alpha$ does not
vanish. For
$-2<\Re z_{\mathrm M}<-1$, set
\[
 \mathfrak m(z_{\mathrm M})=\int_0^\infty
 \sigma^{z_{\mathrm M}-1}
 \left(e^{i\sigma}-1-i\sigma\right)\dd\sigma,
 \qquad
 \sigma^{z_{\mathrm M}-1}
 =e^{(z_{\mathrm M}-1)\log\sigma},
\]
where $\log\sigma$ is real on $(0,\infty)$. This integral is absolutely
convergent. The two subtractions give $O(\sigma^2)$ at the origin, while
the integrand is $O(\sigma^{\Re z_{\mathrm M}})$ at infinity. Two integrations by
parts have vanishing boundary terms throughout this strip and give
\begin{align*}
 \mathfrak m(z_{\mathrm M})
 =-\frac{i}{z_{\mathrm M}}\int_0^\infty
 \sigma^{z_{\mathrm M}}\left(e^{i\sigma}-1\right)\dd\sigma=-\frac{1}{z_{\mathrm M}(z_{\mathrm M}+1)}
 \int_0^\infty
 \sigma^{z_{\mathrm M}+1}e^{i\sigma}\dd\sigma.
\end{align*}
The last integral is understood as the Abel limit obtained by inserting
$e^{-h\sigma}$ and sending $h\downarrow0$. Let $\Gamma_{\mathrm E}$ denote
the Euler gamma function. Since
$0<\Re(z_{\mathrm M}+2)<1$, the Euler gamma integral
yields
\[
 \int_0^\infty \sigma^{z_{\mathrm M}+1}e^{i\sigma}\dd\sigma
 =e^{i\pi(z_{\mathrm M}+2)/2}
 \Gamma_{\mathrm E}(z_{\mathrm M}+2).
\]
Using
$\Gamma_{\mathrm E}(z_{\mathrm M}+2)
=z_{\mathrm M}(z_{\mathrm M}+1)
\Gamma_{\mathrm E}(z_{\mathrm M})$, we conclude that
\[
 \mathfrak m(z_{\mathrm M})
 =e^{i\pi z_{\mathrm M}/2}
 \Gamma_{\mathrm E}(z_{\mathrm M}).
\]
At $z_{\mathrm M}=-3/2-i\alpha$ this is exactly
\[
 C_\alpha
 =e^{\frac{i\pi}{2}(-3/2-i\alpha)}
 \Gamma_{\mathrm E}(-3/2-i\alpha)\neq0,
\]
because neither the exponential nor the Euler gamma function vanishes.

It remains to prove the regularity assertion. Set
$\psi_\alpha(\eta)=\abs{\eta}^{3+2i\alpha}$. Then
\[
 \psi_\alpha''(\eta)
 =(3+2i\alpha)(2+2i\alpha)\abs{\eta}^{1+2i\alpha}
 \quad(\eta\neq0),
 \qquad \psi_\alpha''(0)=0.
\]
On each half-line,
$\abs{(\psi_\alpha'')'(\eta)}
=\abs{(3+2i\alpha)(2+2i\alpha)(1+2i\alpha)}$ almost everywhere.
If two points lie on opposite sides of zero, the bound
$\abs{\psi_\alpha''(\eta)}\leq C(\alpha)\abs{\eta}$ gives the same
Lipschitz estimate across zero. Hence
$\psi_\alpha\in C^{2,1}_{\mathrm{loc}}$. For $\eta\neq0$, the expansion
gives
\begin{align}
 \mathfrak F_\alpha'''(\eta)
 &=C_\alpha(3+2i\alpha)(2+2i\alpha)(1+2i\alpha)
 \operatorname{sgn}(\eta)e^{2i\alpha\log\abs\eta}
 +O_\alpha(\abs\eta).\notag
\end{align}
If $\alpha=0$, the third derivative has distinct nonzero one-sided limits.
If $\alpha\neq0$, it oscillates logarithmically and has no limit at zero.
This proves the regularity assertion. Finally, one integration by parts in
\eqref{eq:F-alpha} gives \eqref{eq:F-alpha-outer}. A second integration by
parts bounds the remainder by $O_\alpha(\abs\eta^{-4})$.
\end{proof}

\begin{lemma}\label{lem:peano-oscillatory-tail}
Let $T\geq1$, $r_0>0$, and $\mu>0$. Suppose that
$\mathfrak q:[T,\infty)\times(-r_0,r_0)\to\C$ is three times continuously
differentiable in $r$ and
\[
 \sup_{\abs r<r_0}\abs{\partial_r^j\mathfrak q(s,r)}
 \leq C_js^{-\mu},
 \qquad 0\leq j\leq3.
\]
Define
\[
 \mathcal E_{\mathrm P}(r)
 =\int_T^\infty s^{-5/2}e^{isr^2}\mathfrak q(s,r)\dd s.
\]
Then there is a polynomial $Q_{\mathrm P}$ of degree at most three such that
\[
 \mathcal E_{\mathrm P}(r)=Q_{\mathrm P}(r)+o(\abs r^3)
 \qquad\text{as }r\to0.
\]
\end{lemma}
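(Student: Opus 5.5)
The plan is to Taylor expand $\mathfrak q$ in $r$ to second order, with an integral remainder, and then expand the oscillatory factor $e^{isr^2}$ separately in each resulting term. The hypotheses give enough decay in $s$ for dominated convergence to upgrade every $O$ bound to the required $o(\abs r^3)$.

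\textbf{Taylor expansion.} For $\abs r<r_0$ write
\[
 \mathfrak q(s,r)=q_0(s)+q_1(s)r+\tfrac12q_2(s)r^2+\mathfrak R(s,r),
 \qquad q_j(s)=\partial_r^j\mathfrak q(s,0),
\]
with remainder
\[
 \mathfrak R(s,r)=\tfrac{r^3}{2}\int_0^1(1-\theta)^2\partial_r^3\mathfrak q(s,\theta r)\dd\theta .
\]
The hypothesis gives $\abs{q_j(s)}\le C_js^{-\mu}$ and $\abs{\mathfrak R(s,r)}\le C_3s^{-\mu}\abs r^3$. By continuity of $\partial_r^3\mathfrak q$, we have $\mathfrak R(s,r)/r^3\to q_3(s)/6$ for each fixed $s$. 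Also $e^{isr^2}\to1$, and the integrand is dominated by $C_3s^{-5/2-\mu}\in L^1([T,\infty))$. Dominated convergence then gives
\[
 \int_T^\infty s^{-5/2}e^{isr^2}\mathfrak R(s,r)\dd s=r^3\Bigl(\tfrac16\int_T^\infty s^{-5/2}q_3(s)\dd s+o(1)\Bigr).
\]
Measurability in $s$ of the $q_j$ is implicit in the definition of $\mathcal E_{\mathrm P}$. I take it for granted, or assume joint continuity.

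\textbf{The $q_2$ and $q_1$ terms.} For the $q_2$ term, $\abs{e^{isr^2}-1}\le sr^2$ and $s^{-3/2-\mu}$ is integrable. Hence $r^2\int s^{-5/2}e^{isr^2}q_2\dd s=r^2\int s^{-5/2}q_2\dd s+O(r^4)$.

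For the $q_1$ term, the quotient $(e^{isr^2}-1)/r^2$ tends to $is$ and is dominated by $s$. Since $s\cdot s^{-5/2-\mu}$ is integrable, dominated convergence gives
\[
 r\int_T^\infty s^{-5/2}e^{isr^2}q_1\dd s=r\int s^{-5/2}q_1\dd s+ir^3\int s^{-3/2}q_1\dd s+o(\abs r^3).
\]

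\textbf{The $q_0$ term (main obstacle).} This is the only delicate term. Here one must go to second order in the phase without a dominating function, because $s^2\cdot s^{-5/2-\mu}$ need not be integrable when $\mu\le1/2$. I write
\[
 e^{isr^2}=1+isr^2+\bigl(e^{isr^2}-1-isr^2\bigr),
 \qquad \abs{e^{ix}-1-ix}\le\min\{x^2,2\abs x\}.
\]
The first two pieces contribute $\int s^{-5/2}q_0+ir^2\int s^{-3/2}q_0$, and both integrals converge absolutely since $\mu>0$. To bound the remainder I split the $s$-integral at $s=r^{-2}$. This gives
\[
 \abs r^4\int_T^{r^{-2}}s^{-1/2-\mu}\dd s+r^2\int_{r^{-2}}^\infty s^{-3/2-\mu}\dd s .
\]
For $\mu<1/2$ this is $\lesssim\abs r^{3+2\mu}$. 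For $\mu=1/2$ it is $\lesssim r^4\log(1/\abs r)$, and for $\mu>1/2$ it is $\lesssim r^4$. In every case the bound is $o(\abs r^3)$ because $\mu>0$.

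\textbf{Conclusion.} Collecting the four contributions gives $\mathcal E_{\mathrm P}(r)=Q_{\mathrm P}(r)+o(\abs r^3)$. The polynomial $Q_{\mathrm P}$ has the explicit coefficients
\[
 \int s^{-5/2}q_0,\qquad \int s^{-5/2}q_1,\qquad \tfrac12\int s^{-5/2}q_2+i\int s^{-3/2}q_0,\qquad i\int s^{-3/2}q_1+\tfrac16\int s^{-5/2}q_3,
\]
for the powers $r^0,r^1,r^2,r^3$ respectively.
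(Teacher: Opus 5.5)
Your proof is correct and follows essentially the same route as the paper. Both arguments Taylor expand $\mathfrak q$ in $r$ to third order with integral remainder, handle the remainder by dominated convergence, and expand $e^{isr^2}$ term by term, producing exactly your four coefficients. The one difference is bookkeeping: the paper treats all $j$ at once via $H_j(\upsilon)=\int_T^\infty s^{-5/2}\mathfrak q_j(s)e^{i\upsilon s}\dd s$ and the H\"older bound $\abs{e^{iz}-1}\leq C_\omega\abs{z}^\omega$ on $H_j'$, with $\frac12<\omega<\min\{1,\frac12+\mu\}$, which is the same interpolation you carry out explicitly by splitting the $q_0$ integral at $s=r^{-2}$.
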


\begin{proof}
For $0\leq j\leq3$, set
$\mathfrak q_j(s)=\partial_r^j\mathfrak q(s,0)$ and
\[
 H_j(\upsilon_{\mathrm P})=\int_T^\infty
 s^{-5/2}\mathfrak q_j(s)e^{i\upsilon_{\mathrm P}s}\dd s.
\]
The assumed decay shows that $H_j$ is continuously differentiable, with
\[
 H_j'(\upsilon_{\mathrm P})=i\int_T^\infty
 s^{-3/2}\mathfrak q_j(s)e^{i\upsilon_{\mathrm P}s}\dd s.
\]
Choose
\[
 \frac12<\omega<\min\left\{1,\frac12+\mu\right\}.
\]
The inequality $\abs{e^{iz}-1}\leq C_\omega\abs z^\omega$ gives
\[
 \abs{H_j'(\upsilon_{\mathrm P})-H_j'(0)}
 \leq C\abs{\upsilon_{\mathrm P}}^\omega
 \int_T^\infty s^{-3/2-\mu+\omega}\dd s
 \leq C\abs{\upsilon_{\mathrm P}}^\omega.
\]
Consequently,
\begin{equation}\label{eq:Hj-peano}
 H_j(\upsilon_{\mathrm P})
 =H_j(0)+\upsilon_{\mathrm P}H_j'(0)
 +O(\abs{\upsilon_{\mathrm P}}^{1+\omega}).
\end{equation}
Taylor's formula with integral remainder gives
\[
 \mathfrak q(s,r)=\mathfrak q_0(s)+r\mathfrak q_1(s)
 +\frac{r^2}{2}\mathfrak q_2(s)
 +\frac{r^3}{2}\int_0^1(1-\vartheta)^2
 \partial_r^3\mathfrak q(s,\vartheta r)\dd\vartheta.
\]
Substitute this identity into $\mathcal E_{\mathrm P}(r)$ and apply
\eqref{eq:Hj-peano} with $\upsilon_{\mathrm P}=r^2$. The last term is treated by
dominated convergence. Since $1+\omega>3/2$, we obtain
\begin{align*}
 \mathcal E_{\mathrm P}(r)
 =H_0(0)+rH_1(0)
+r^2\left[H_0'(0)+\frac12H_2(0)\right]+r^3\left[H_1'(0)+\frac16H_3(0)\right]
 +o(\abs r^3).
\end{align*}
\end{proof}

\begin{theorem}\label{thm:terminal-profile-peano-singularity}
Assume $p>4$ in \eqref{eq:W-assumption}. Let $u$ be the small solution in
\autoref{thm:forward-scattering}, and assume in addition that
$u(1)\in\cS(\R)$. Use the notation
\[
 \zeta_0=-\frac\gamma2,
 \qquad A_0=A(-\gamma),
 \qquad \alpha=\beta\abs{A_0}^2,
 \qquad M_0=\int_\R\cW(x)\dd x.
\]
There are coefficients $b_0,b_1,b_2,b_3\in\C$ such that
\begin{equation}\label{eq:terminal-profile-peano-expansion}
 \cP_{u,+}(\zeta_0+r)
 =\sum_{j=0}^3b_jr^j
 +iM_0\abs{A_0}^4A_0C_\alpha
 \abs r^{3+2i\alpha}
 +o(\abs r^3)
\end{equation}
as $r\to0$. The complex power is assigned the value zero at $r=0$.
In particular, $\cP_{u,+}$ has a quadratic Peano approximation at
$\zeta_0$ with an $O(\abs r^3)$ remainder. If $M_0A_0\neq0$, it does not
admit a third-order Taylor expansion there. Hence it is not $C^3$ in any
neighborhood of $\zeta_0$. No $C^{2,1}$ regularity on a full neighborhood
is asserted.
\end{theorem}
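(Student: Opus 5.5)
We split the terminal profile at a large fixed time $T\geq2$ as
\[
 \cP_{u,+}=\cP_u(T)+\mathfrak D_{\cW}(T,\cdot),
\]
using \eqref{eq:localized-Duhamel-tail}. The finite-time part $\cP_u(T)$ is smooth in $\zeta$. Since $u(1)\in\cS(\R)$, persistence of regularity and decay on $[1,T]$ gives $\la y\ra^4u(s)\in L^2\cap L^1$ uniformly for $s\in[1,T]$, so $\widehat u(T)$ and the finite cubic integral in \eqref{eq:exact-renormalized-profile} are $C^4$ in $\zeta$. This uses the vector fields $L(t)^k$ and weights $y^k$, $k\leq4$, which propagate on compact time intervals by the same linear Gronwall arguments as in \autoref{prop:local-theory}. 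Hence $\cP_u(T)$ has a genuine third-order Taylor polynomial at $\zeta_0$, with $O(\abs r^4)$ remainder. All singular behaviour therefore sits in the localized tail $\mathfrak D_{\cW}(T,\zeta_0+r)$.

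For the tail we decompose as in the proof of \autoref{thm:exact-boundary-layer}. Using the phase identity \eqref{eq:phase-identity-section4} and $x=y+\gamma s$, write
\[
 \mathfrak D_{\cW}(T,\zeta_0+r)=i\int_T^\infty s^{-5/2}e^{isr^2}\,\mathfrak q(s,r)\dd s,
 \qquad
 \mathfrak q(s,r)=s^{1/2}\int\cW(x)e^{-irx}e^{-ix^2/(4s)}\,e^{-i(x-\gamma s)^2/(4s)}\cdots,
\]
more concretely $\mathfrak q(s,r)=\int\cW(x)e^{-irx}G(s,x)\dd x$ with $G(s,x)=s^{5/2}e^{-i\gamma x/2+\dots}\abs{u}^4u(s,x-\gamma s)$ times the quadratic phase. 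We then split
\[
 G=\abs{A_0}^4A_0\,s^{-i\alpha}+\bigl(G-\abs{A_0}^4A_0\,s^{-i\alpha}\bigr).
\]
The main term gives exactly
\[
 iM_0\abs{A_0}^4A_0\int_T^\infty s^{-5/2-i\alpha}e^{isr^2}\dd s
 =iM_0\abs{A_0}^4A_0\,r^{3+2i\alpha}\!\!\int_{Tr^2}^\infty\sigma^{-5/2-i\alpha}e^{i\sigma}\dd\sigma
\]
up to $e^{-irx}$ corrections handled below. Expanding this exactly as in the proof of \autoref{prop:F-alpha-regularity} (subtract $1+i\sigma$) yields a polynomial in $r^2$ plus $C_\alpha\abs r^{3+2i\alpha}$ plus $O(\abs r^4)$. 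The factor $e^{-irx}$ is Taylor expanded to third order. This is legitimate because $p>4$ gives $\int\abs x^3\abs{\cW}<\infty$, with $o(\abs r^3)$ remainder by dominated convergence. The terms $r^jx^j$, $j\geq1$, multiply $\int s^{-5/2-i\alpha}e^{isr^2}$, whose expansion is $c_0+c_1r^2+O(\abs r^3)$; multiplied by $r^j$, these contribute only polynomial terms plus $o(r^3)$.

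The remainder term $\mathfrak q_{\mathrm{rem}}(s,r)=\int\cW(x)e^{-irx}\bigl(G-\abs{A_0}^4A_0s^{-i\alpha}\bigr)\dd x$ is fed to \autoref{lem:peano-oscillatory-tail}. For this we need
\[
 \abs{\partial_r^j\mathfrak q_{\mathrm{rem}}}\leq C_js^{-\mu}
 \qquad\text{for some }\mu>0,\ j\leq3.
\]
Derivatives in $r$ only bring down $x^j$, absorbed by $p>4$. So it suffices to show
\[
 \abs{G(s,x)-\abs{A_0}^4A_0s^{-i\alpha}}\leq Cs^{-\mu}\la x\ra^{\theta}
 \qquad\text{with }\theta\leq1.
\]
This follows from \eqref{eq:u-asymp-section4}: the error $\mathcal E_{\mathrm{as}}$ contributes $s^{5/2}\cdot s^{-2}\cdot s^{-3/4+\delta_1}=s^{-1/4+\delta_1}$. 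The profile part contributes
\[
 \abs{\widetilde A_s(x)-A_0}\lesssim s^{-\theta_A}\abs x^{\theta_A},
 \qquad
 e^{ix^2/(4s)}-1=O\bigl(s^{-\theta_A}\abs x^{2\theta_A}\bigr),
\]
and the phase difference $\beta(\abs{\widetilde A_s}^2-\abs{A_0}^2)\log s$ costs a logarithm, as in \eqref{eq:BT-quantitative}. Here $2\theta_A<1$, so weights are fine. The lemma then gives a cubic polynomial plus $o(\abs r^3)$. Summing all pieces produces \eqref{eq:terminal-profile-peano-expansion}.

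The main obstacle is the bookkeeping of the mixed terms $r^jx^j\times$ the singular main integral, together with checking that the uniform-in-$x$ bound on $G$ is valid with only polynomial weights. The first is handled by the explicit expansion of $\mathfrak F$-type integrals. For the second I would first try using the $L^\infty$ asymptotic from \autoref{thm:forward-scattering} directly. The nontriviality conclusion is then immediate: if $M_0A_0\neq0$, then $C_\alpha\neq0$ by \autoref{prop:F-alpha-regularity}. A term $c\abs r^{3+2i\alpha}$ with $c\neq0$ cannot be absorbed into a cubic polynomial plus $o(r^3)$: for $\alpha=0$ compare $r\to0^\pm$; for $\alpha\neq0$ use logarithmic oscillation. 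So there is no third-order Taylor expansion, hence $\cP_{u,+}$ is not $C^3$ near $\zeta_0$.
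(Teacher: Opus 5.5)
Your proposal is correct and follows essentially the paper's proof. The paper also splits at a fixed $T$ and feeds the remainder $\mathfrak q(s,r)=a(s,r)-\abs{A_0}^4A_0s^{-i\alpha}\widehat{\cW}(r)$ to \autoref{lem:peano-oscillatory-tail}. It then treats the main term $i\abs{A_0}^4A_0\widehat{\cW}(r)\int_T^\infty s^{-5/2-i\alpha}e^{isr^2}\dd s$ by the scaled form of \eqref{eq:F-alpha-expansion} together with a third-order Taylor expansion of $\widehat{\cW}$. Your Taylor expansion of $e^{-irx}$ is the same step.

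One exponent claim needs correcting. For $j=3$ you need $\int_\R\abs x^{3+\theta}\abs{\cW(x)}\dd x<\infty$, and under \eqref{eq:W-assumption} this requires $\theta<p-4$, not merely $\theta\leq1$. So for $p$ close to $4$, the weights $\abs x^{\theta_A}$ and $\abs x^{2\theta_A}$ borrowed from the boundary-layer proof make this integral diverge, and ``$2\theta_A<1$, so weights are fine'' is false there. The fix is the paper's choice of smaller exponents:
\begin{itemize}
\item use a H\"older exponent $\theta<\min\{\theta_A,p-4\}$, which is legitimate because $A$ is bounded and $\theta_A$-H\"older;
\item use $\abs{e^{ix^2/(4s)}-1}\leq C_\nu s^{-\nu}\abs x^{2\nu}$ with $0<\nu<\min\{1,(p-4)/2\}$.
\end{itemize}
This only shrinks the decay exponent $\mu$.

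A minor notational point: the substitution $\sigma=sr^2$ produces $\abs r^{3+2i\alpha}$, not $r^{3+2i\alpha}$. The absolute value is what gives distinct one-sided limits when $\alpha=0$, and you use the correct form later.
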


\begin{proof}
Fix $T\geq2$. Finite-time persistence of Schwartz regularity gives
$u\in C([1,T];\cS)$. Indeed, one commutes the equation with finite products
of $y$ and $\partial_y$ and applies Gronwall's inequality on compact time
intervals. Hence $\cP_u(T)$ is a Schwartz function of $\zeta$.

Set $r=\zeta-\zeta_0$ and
\[
 V(s,x)=s^{1/2}e^{-i(x-\gamma s)^2/(4s)}u(s,x-\gamma s).
\]
The phase identity \eqref{eq:phase-identity-section4} and the exact tail
formula give
\begin{equation}\label{eq:terminal-tail-r}
 \cP_{u,+}(\zeta_0+r)-\cP_u(T,\zeta_0+r)
 =i\int_T^\infty s^{-5/2}e^{isr^2}a(s,r)\dd s,
\end{equation}
where
\[
 a(s,r)=\int_\R\cW(x)e^{-irx}e^{ix^2/(4s)}
 \abs{V(s,x)}^4V(s,x)\dd x.
\]
Let $c_0=\abs{A_0}^4A_0$ and define
\[
 \mathfrak q(s,r)=a(s,r)-c_0s^{-i\alpha}\widehat{\cW}(r).
\]
We claim that there is $\mu>0$ such that
\begin{equation}\label{eq:peano-remainder-derivative-decay}
 \sup_{\abs r\leq1}\abs{\partial_r^j\mathfrak q(s,r)}
 \leq C_js^{-\mu},
 \qquad 0\leq j\leq3.
\end{equation}

To prove the claim, write
\[
 V_A(s,x)=A\left(-\gamma+\frac{x}{s}\right)
 \exp\left[-i\beta
 \left|A\left(-\gamma+\frac{x}{s}\right)\right|^2\log s\right].
\]
The physical-space asymptotic in
\autoref{thm:forward-scattering} gives
\[
 \norm{V(s)-V_A(s)}_\infty
 \leq C\varepsilon s^{-a_*},
 \qquad a_*=\frac14-2\delta>0.
\]
Choose
\[
 0<\theta<\min\left\{\frac12-2\delta,p-4\right\}
\]
and then choose $s_{\mathrm P}$ such that
\[
 \frac12+\theta<s_{\mathrm P}<1-2\delta.
\]
By \eqref{eq:A-reg}, $A\in H^{s_{\mathrm P}}(\R)$. The one-dimensional Sobolev
embedding gives $A\in C^{0,\theta}(\R)$. Therefore
\begin{align*}
 \left|\abs{V_A(s,x)}^4V_A(s,x)-c_0s^{-i\alpha}\right|\leq Cs^{-\theta}(1+\log s)\abs x^\theta.
\end{align*}
The polynomial Lipschitz estimate for $z\mapsto\abs z^4z$ also gives
\[
 \left|\abs{V(s,x)}^4V(s,x)
 -\abs{V_A(s,x)}^4V_A(s,x)\right|
 \leq Cs^{-a_*}.
\]
Choose
\[
 0<\nu<\min\left\{1,\frac{p-4}{2}\right\}.
\]
Then
\[
 \left|e^{ix^2/(4s)}-1\right|
 \leq C_\nu s^{-\nu}\abs x^{2\nu}.
\]
For $0\leq j\leq3$, differentiation in $r$ contributes the factor
$(-ix)^j$. The condition $p>4$, the choices of $\theta$ and $\nu$, and
\eqref{eq:W-assumption} imply
\[
 \int_\R\left(\abs x^j+\abs x^{j+\theta}
 +\abs x^{j+2\nu}\right)\abs{\cW(x)}\dd x<\infty.
\]
After decreasing the exponent to absorb the logarithm, this proves
\eqref{eq:peano-remainder-derivative-decay} with any
\[
 0<\mu<\min\{a_*,\theta,\nu\}.
\]

Apply \autoref{lem:peano-oscillatory-tail} to the contribution of
$\mathfrak q$ in
\eqref{eq:terminal-tail-r}. It is a polynomial of degree at most three
plus $o(\abs r^3)$. The remaining contribution is
\[
 ic_0\widehat{\cW}(r)H_{\alpha,T}(r),
 \qquad
 H_{\alpha,T}(r)
 =\int_T^\infty s^{-5/2-i\alpha}e^{isr^2}\dd s.
\]
Scaling \eqref{eq:F-alpha-expansion} gives
\begin{align*}
 H_{\alpha,T}(r)
 &=\frac{T^{-3/2-i\alpha}}{\frac32+i\alpha}
 +\frac{iT^{-1/2-i\alpha}}{\frac12+i\alpha}r^2+C_\alpha\abs r^{3+2i\alpha}
 +r^4G_{\alpha,T}(r^2),
\end{align*}
where $G_{\alpha,T}$ is smooth near zero. Since $p>4$,
$\widehat{\cW}$ is three times continuously differentiable. Taylor's
theorem gives
\[
 \widehat{\cW}(r)=\sum_{j=0}^3w_jr^j+o(\abs r^3),
 \qquad
 w_j=\frac{\widehat{\cW}^{(j)}(0)}{j!},
 \qquad w_0=M_0.
\]
It follows that
\[
 ic_0\widehat{\cW}(r)H_{\alpha,T}(r)
 =Q_0(r)+iM_0c_0C_\alpha\abs r^{3+2i\alpha}
 +o(\abs r^3)
\]
for a polynomial $Q_0$ of degree at most three. Combining this identity
with the Taylor expansion of $\cP_u(T)$ proves
\eqref{eq:terminal-profile-peano-expansion}.

Assume now that $M_0A_0\neq0$. The coefficient of the complex power is
nonzero by \autoref{prop:F-alpha-regularity}. If $\cP_{u,+}$ admitted a
third-order Taylor expansion at $\zeta_0$, subtracting it from
\eqref{eq:terminal-profile-peano-expansion} and dividing by $r^3$ would
force
\[
 iM_0\abs{A_0}^4A_0C_\alpha
 \operatorname{sgn}(r)e^{2i\alpha\log\abs r}
\]
to have a limit as $r\to0$. For $\alpha=0$, its one-sided limits are
distinct. For $\alpha\neq0$, it oscillates logarithmically. This is a
contradiction.
\end{proof}

\begin{corollary}\label{cor:terminal-singularity-nonempty}
Assume $p>4$ and $M_0\neq0$. For the Schwartz function
$f_\gamma(y)=e^{-i\gamma y/2}e^{-y^2}$ from
\autoref{cor:nonvanishing-distinguished-profile}, every sufficiently small
nonzero real $\rho_{\mathrm g}$ gives a solution with datum
$u(1)=\rho_{\mathrm g} f_\gamma$ for which
\[
 \cP_{u,+}\notin C^3
\]
in every neighborhood of $\zeta_0=-\gamma/2$. The same conclusion holds
on a relatively open set of sufficiently small Schwartz data, with the topology
induced by $\Sigma$.
\end{corollary}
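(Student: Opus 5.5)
The argument combines three earlier results: the nonvanishing criterion of \autoref{cor:nonvanishing-distinguished-profile}, continuity of the scattering map from \autoref{cor:scattering-map}, and the Peano expansion of \autoref{thm:terminal-profile-peano-singularity}.

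\emph{The explicit family.} Take $u(1)=\rho_{\mathrm g}f_\gamma$. This datum is Schwartz and has $\Sigma$-norm $|\rho_{\mathrm g}|\norm{f_\gamma}_\Sigma$, so for $\rho_{\mathrm g}$ small it lies in the ball where \autoref{thm:forward-scattering} applies. The proof of \autoref{cor:nonvanishing-distinguished-profile}, through the linearization in \autoref{prop:scattering-map-linearization}, gives
\[
A[\rho_{\mathrm g}f_\gamma](-\gamma)=\tfrac{\rho_{\mathrm g}}{2}e^{i\gamma^2/4-i\pi/4}+O(\rho_{\mathrm g}^3),
\]
which is nonzero for small nonzero $\rho_{\mathrm g}$. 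Since $M_0\neq0$ by hypothesis, we have $M_0A_0\neq0$. With $p>4$ and Schwartz data, \autoref{thm:terminal-profile-peano-singularity} applies and shows that $\cP_{u,+}$ has no third-order Taylor expansion at $\zeta_0$. In particular, $\cP_{u,+}$ is not $C^3$ on any neighborhood of $\zeta_0$, because a $C^3$ function on a neighborhood has a Taylor expansion with $o(|r|^3)$ remainder.

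\emph{The open set.} Let $\mathcal O$ be the set of Schwartz data $g$ with $\norm g_\Sigma<\varepsilon_0$ and $A[g](-\gamma)\neq0$. This set contains $\rho_{\mathrm g}f_\gamma$. It is relatively open in $\cS(\R)$ for the $\Sigma$ topology, because $S_+$ is continuous into $L^\infty$ by \autoref{cor:scattering-map}. Evaluating at the point $-\gamma$ is legitimate: by \eqref{eq:A-reg} the profiles have continuous representatives, and uniform convergence of continuous functions gives pointwise convergence. For every $g\in\mathcal O$ the hypotheses of \autoref{thm:terminal-profile-peano-singularity} hold again, with $M_0A_0\neq0$, and the same conclusion follows.

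\emph{Main obstacle.} The only delicate point is making sure the smallness thresholds agree. The radius $\varepsilon_{\mathrm{lin}}$ from \autoref{prop:scattering-map-linearization} and the radius from \autoref{cor:scattering-map} should be reduced to a common value before $\rho_{\mathrm g}$ is chosen, and the cubic remainder $C_{\mathrm{lin}}|\rho_{\mathrm g}|^3\norm{f_\gamma}_\Sigma^3$ should be checked to be below $|\rho_{\mathrm g}|/4$. Beyond this the argument is a direct combination of the cited results.
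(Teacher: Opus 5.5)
Your proposal is correct and follows essentially the paper's proof: nonvanishing of $A[\rho_{\mathrm g}f_\gamma](-\gamma)$ from the linearization in \autoref{cor:nonvanishing-distinguished-profile}, then \autoref{thm:terminal-profile-peano-singularity}, with openness from the $L^\infty$ continuity of $S_+$. Your remarks on evaluating continuous representatives at $-\gamma$ and on reconciling the smallness radii spell out details the paper leaves implicit.
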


\begin{proof}
The expansion in
\autoref{cor:nonvanishing-distinguished-profile} gives
$A[\rho_{\mathrm g} f_\gamma](-\gamma)\neq0$. Apply
\autoref{thm:terminal-profile-peano-singularity}. Openness follows from
the $L^\infty$ continuity of the scattering map.
\end{proof}

\autoref{prop:F-alpha-regularity} determines the regularity of the limiting
inner profile, while \autoref{thm:terminal-profile-peano-singularity}
proves a singularity of the terminal profile attached to the solution.

\begin{corollary}
\label{cor:inner-profile-regularity}
Under the hypotheses of \autoref{thm:exact-boundary-layer}, assume also that
\[
 M_0A_0\neq0.
\]
Then the rescaled tails of the cubic-renormalized profile in
\eqref{eq:boundary-layer-limit} converge in $C^0_{\mathrm{loc}}$ to a
nonzero profile in $C^{2,1}_{\mathrm{loc}}$. Its third derivative does not
extend continuously to $\eta=0$. 
\end{corollary}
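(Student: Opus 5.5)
This corollary combines two earlier results. \autoref{thm:exact-boundary-layer} supplies the convergence. \autoref{prop:F-alpha-regularity} supplies the regularity and nonvanishing of the limit. So the plan is to apply the two in sequence.

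First, convergence. Apply \eqref{eq:boundary-layer-limit} with $\kappa=0$. For every $R_{\mathrm{in}}>0$ it gives
\[
 \sup_{\abs\eta\leq R_{\mathrm{in}}}
 \left|T^{3/2}e^{i\alpha\log T}\mathfrak D_{\cW}\left(T,\zeta_0+\frac{\eta}{\sqrt T}\right)-iM_0\abs{A_0}^4A_0\,\mathfrak F_\alpha(\eta)\right|
 \leq C_{R_{\mathrm{in}},0}\left(T^{-a_*}+T^{-1/2}\right).
\]
Here $\mathfrak D_{\cW}=\cP_{u,+}-\cP_u(T)$ by \eqref{eq:localized-Duhamel-tail}. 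The right-hand side tends to zero as $T\to\infty$. Every compact set lies in some interval $\abs\eta\leq R_{\mathrm{in}}$, so this is exactly convergence in $C^0_{\mathrm{loc}}$ to the profile $\mathcal G:=iM_0\abs{A_0}^4A_0\,\mathfrak F_\alpha$.

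Second, nonvanishing. The prefactor $iM_0\abs{A_0}^4A_0$ is nonzero because $M_0A_0\neq0$. The computation in the proof of \autoref{cor:profile-tail-sharpness} gives $\mathfrak F_\alpha(0)=(\frac32+i\alpha)^{-1}\neq0$. Hence $\mathcal G(0)\neq0$, and $\mathcal G$ is not the zero function.

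Third, regularity. A nonzero constant multiple preserves membership in $C^{2,1}_{\mathrm{loc}}(\R)$, so \autoref{prop:F-alpha-regularity} puts $\mathcal G$ in this class. For the third derivative, use the formula for $\mathfrak F_\alpha'''$ from the proof of \autoref{prop:F-alpha-regularity}: for $\eta\neq0$,
\[
 \mathfrak F_\alpha'''(\eta)=C_\alpha(3+2i\alpha)(2+2i\alpha)(1+2i\alpha)\operatorname{sgn}(\eta)e^{2i\alpha\log\abs\eta}+O_\alpha(\abs\eta).
\]
The constant $C_\alpha$ is nonzero by the same proposition. Each factor $3+2i\alpha$, $2+2i\alpha$, $1+2i\alpha$ has positive real part, so none vanishes. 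After multiplying by the nonzero prefactor, $\mathcal G'''$ behaves as follows:
\begin{itemize}
 \item if $\alpha=0$, it has two distinct nonzero one-sided limits at $0$;
 \item if $\alpha\neq0$, it oscillates logarithmically and has no limit at $0$.
\end{itemize}
In either case $\mathcal G'''$ has no continuous extension to $\eta=0$.

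No step is a genuine obstacle. The only point to check is that the nonzero prefactor does not destroy the non-extendability of the third derivative, and this holds because multiplying by a nonzero constant preserves both distinct one-sided limits and logarithmic oscillation.
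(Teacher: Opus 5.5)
Your proposal is correct and is essentially the argument the paper intends: the paper gives no separate proof, and the corollary follows from two earlier results. The $\kappa=0$ case of \autoref{thm:exact-boundary-layer} gives uniform convergence on compact $\eta$-sets at rate $O(T^{-a_*})$, and \autoref{prop:F-alpha-regularity} gives the regularity, with the limit nonzero because $M_0A_0\neq0$ and $\mathfrak F_\alpha(0)=(\frac32+i\alpha)^{-1}\neq0$. Your use of the explicit formula for $\mathfrak F_\alpha'''$ is fine, though the stated conclusion $\mathfrak F_\alpha\notin C^3$ already suffices: a $C^2$ function that is smooth away from $0$ and whose third derivative extends continuously to $0$ would be $C^3$.
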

Note that the above conclusion concerns the inner limit. It does not assert
derivative convergence of the finite-$T$ tails or a loss of regularity for
the finite-time solution.

We finish the inner analysis by comparing it with the formal scalar eikonal
integral.

\begin{remark}\label{rem:formal-eikonal}
Deleting $s^{-2}\partial_\xi^2U$ from \eqref{eq:selfsimilar-intro} and
freezing the amplitude gives
\[
 \mathcal I_{\cW}(r):=\int_1^\infty s^{-2}\cW(sr)\dd s.
\]
Splitting the integral at $s=\abs{r}^{-1}$ and expanding $\cW$ near zero
gives
\[
 \mathcal I_{\cW}(r)
 =\cW(0)+\cW'(0)r\log\frac1{\abs r}+O(\abs r)
 \qquad(r\to0).
\]
The scalar calculation first lets $s\to\infty$ with fixed $r\neq0$ and then
lets $r\to0$. It does not apply in the joint regime $\abs r\sim s^{-1}$,
where the dispersive operator is leading order. The tail of $\cP_u$ is instead
organized on the scale $\abs{\zeta+\gamma/2}\sim T^{-1/2}$ and has the profile
\eqref{eq:F-alpha}.
\end{remark}

\begin{proposition}\label{prop:mean-zero-next-limit}
Assume the hypotheses of \autoref{thm:exact-boundary-layer}. Suppose in
addition that $M_0=0$ and $A\in W^{1,\infty}(\R)$. Define
\[
 u_A(s,y)=s^{-1/2}e^{iy^2/(4s)}A(y/s)
 e^{-i\beta\abs{A(y/s)}^2\log s}.
\]
Assume that, for some $\nu>0$,
\begin{equation}\label{eq:enhanced-local-asymptotic}
 \norm{u(s)-u_A(s)}_\infty\leq Cs^{-1-\nu},
 \qquad s\geq2.
\end{equation}
Set
\[
 M_1=\int_\R x\cW(x)\dd x.
\]
Then, for every $R>0$,
\begin{align}
 \sup_{\abs\eta\leq R}\left|
 T^2e^{i\alpha\log T}
 \mathfrak D_{\cW}\left(T,\zeta_0+\frac{\eta}{\sqrt T}\right)
 -M_1\abs{A_0}^4A_0\,\eta\mathfrak F_\alpha(\eta)
 \right|\longrightarrow0
 \label{eq:mean-zero-next-limit}
\end{align}
as $T\to\infty$. If $M_1A_0\neq0$, the limiting profile is nonzero.
\end{proposition}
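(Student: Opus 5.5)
The plan follows the proof of \autoref{thm:exact-boundary-layer}, carried one order further in $T^{-1/2}$. Starting from the exact tail formula \eqref{eq:localized-Duhamel-tail}, we split $u=u_A+(u-u_A)$. The Lipschitz bound for $z\mapsto\abs z^4z$, together with \eqref{eq:enhanced-local-asymptotic} and $\norm{u(s)}_\infty+\norm{u_A(s)}_\infty\lesssim s^{-1/2}$, bounds the difference of quintic sources pointwise by $Cs^{-2}\cdot s^{-1-\nu}=Cs^{-3-\nu}$. Integrating against $\abs{\cW}$ over $s\geq T$ gives a contribution $O(T^{-2-\nu})$ uniformly in $\zeta$. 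After multiplication by $T^2$ this is $o(1)$. This is the reason for the enhanced hypothesis: the generic rate $s^{-3/4+2\delta}$ from \autoref{thm:forward-scattering} would only give $O(T^{-7/4+\delta_1})$, which is too large at the new scale $T^{-2}$.

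For the $u_A$ contribution we use \eqref{eq:D-as-exact} and the same scaling $\varrho=\eta/\sqrt T$, $s=T\tau$. After multiplying by $T^2e^{i\alpha\log T}$ we obtain
\[
 iT^{1/2}\int_1^\infty\tau^{-5/2}e^{i\eta^2\tau}\int_\R\cW(x)B_T(\tau,x,\eta)\dd x\dd\tau,
\]
with $B_T$ as before. Since $M_0=0$, we may subtract the constant $\abs{A_0}^4A_0\tau^{-i\alpha}$ from $B_T$ at no cost. It then suffices to show that
\[
 T^{1/2}\bigl(B_T-\abs{A_0}^4A_0\tau^{-i\alpha}\bigr)
\]
converges, after integration against $\cW(x)\tau^{-5/2}$, to $-i\eta x\abs{A_0}^4A_0\tau^{-i\alpha}$. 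Granting this, the $x$-integral produces $-i\eta M_1$. Combining with the prefactor $i$ gives $M_1\abs{A_0}^4A_0\,\eta\,\mathfrak F_\alpha(\eta)$, as claimed. We expand $B_T$ factor by factor.
\begin{enumerate}[label=\textup{(\roman*)}]
\item The factor $e^{-i\eta x/\sqrt T}=1-i\eta x/\sqrt T+O(\eta^2x^2/T)$ supplies the main term. For the remainder we use $\min\{\abs{\eta x}^2/T,\;2\abs{\eta x}/\sqrt T\}$ and dominated convergence, which needs only $\int\abs x\abs{\cW}<\infty$ (true since $p>2$).
\item For $e^{ix^2/(4T\tau)}-1$ we use the bound $\lesssim T^{-\nu'}\abs x^{2\nu'}\tau^{-\nu'}$ with $1/2<\nu'<\min\{1,p/2-1/2\}$, so that after multiplication by $T^{1/2}$ the contribution is $o(1)$ and $\abs x^{2\nu'}\abs{\cW}$ remains integrable.
\item For the amplitude and its nonlinear phase, the Lipschitz property $A\in W^{1,\infty}$ gives $\abs{\widetilde A_{T\tau}(x)-A_0}\leq\norm{A'}_\infty\abs x/(T\tau)$. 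The resulting error is $O(T^{-1}\abs x\log(T\tau))$, and after multiplication by $T^{1/2}$ it is $O(T^{-1/2}\log T)\to0$.
\end{enumerate}
Products of these errors are handled by the same bounds.

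The step requiring the most care is the uniformity and integrability bookkeeping. Every remainder must carry a factor of $T^{-1/2-\epsilon}$, so that it beats the amplification by $T^{1/2}$, while keeping only moments of $\cW$ of order strictly below $p-1$. The delicate term is the quadratic phase $e^{ix^2/(4T\tau)}$, which needs $\abs x^{1+}\abs{\cW}\in L^1$; this holds since $p>2$. Since $\abs\eta\leq R$ is fixed, all constants are uniform in $\eta$, and the $\tau$-integrals converge because of the factor $\tau^{-5/2}$.

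Finally, if $M_1A_0\neq0$, the limit $M_1\abs{A_0}^4A_0\,\eta\mathfrak F_\alpha(\eta)$ is nonzero for small $\eta\neq0$. Indeed, $\mathfrak F_\alpha(0)=(3/2+i\alpha)^{-1}\neq0$ by \autoref{prop:F-alpha-regularity}, and $\mathfrak F_\alpha$ is continuous.
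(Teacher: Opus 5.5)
Your proposal is correct and follows essentially the same route as the paper. The enhanced hypothesis removes the $u-u_A$ contribution at order $O(T^{-2-\nu})$. For the $u_A$ part, $M_0=0$ lets you subtract the constant, and you then expand the Fourier phase, the quadratic phase, and the Lipschitz amplitude with its nonlinear phase factor by factor.

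The only minor difference is the quadratic phase. You bound $e^{ix^2/(4T\tau)}-1$ with an exponent $\nu'>1/2$, which needs the moment $\int\abs x^{2\nu'}\abs{\cW}\dd x$ with $2\nu'<p-1$. The paper instead uses $\sqrt T\abs{e^{ix^2/(4T\tau)}-1}\leq C\abs x\tau^{-1/2}$ and dominated convergence, which needs only the first moment of $\cW$.
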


\begin{proof}
Insert $u_A$ in the exact tail
\eqref{eq:localized-Duhamel-tail}. Since
$\norm u(s)_\infty+\norm{u_A(s)}_\infty\lesssim s^{-1/2}$,
\eqref{eq:enhanced-local-asymptotic} gives
\[
 \norm{\cW(\cdot+\gamma s)
 [\abs u^4u-\abs{u_A}^4u_A]}_1
 \leq Cs^{-3-\nu}.
\]
Its tail is $O(T^{-2-\nu})$ and vanishes after multiplication by $T^2$.

For the leading term, use \eqref{eq:D-as-exact}, set
$s=T\tau$, and multiply by $T^2e^{i\alpha\log T}$. Relative to
\eqref{eq:scaled-D-as}, this leaves an additional factor $\sqrt T$.
Because $M_0=0$,
\[
 \sqrt T\int_\R\cW(x)e^{-i\eta x/\sqrt T}\dd x
 \longrightarrow-i\eta M_1
\]
uniformly for $\eta$ in compact sets. The integrand is dominated by
$C_R\abs{x\cW(x)}$. Moreover,
\[
 \sqrt T\left|e^{ix^2/(4T\tau)}-1\right|
 \leq C\abs x\,\tau^{-1/2}.
\]
Put
\[
 G_{T,\tau}(z)=\abs z^4z
 \exp\left[i\alpha\log T-i\beta\abs z^2\log(T\tau)\right].
\]
On the bounded range of $A$, the polynomial Lipschitz estimate and the
mean value theorem give
\begin{align*}
 \sqrt T\left|
 G_{T,\tau}\left(A\left(-\gamma+\frac{x}{T\tau}\right)\right)
 -G_{T,\tau}(A_0)\right|\leq
 C\abs x\frac{1+\log(T\tau)}{\sqrt T\,\tau}.
\end{align*}
In particular, the nonlinear phase satisfies
\[
 \sqrt T\left|
 \left|A\left(-\gamma+\frac{x}{T\tau}\right)\right|^2
 -\abs{A_0}^2\right|\log(T\tau)
 \leq C\abs x\frac{\log(T\tau)}{\sqrt T\,\tau}.
\]
The last two expressions tend to zero pointwise and are bounded by
$C\abs x(1+\log\tau)/\tau$ for $T\geq2$. The spatial-phase difference is
bounded by $C\abs x\,\tau^{-1/2}$. The Fourier difference quotient is
bounded by $C_R\abs x$ for $\abs\eta\leq R$. After multiplication by
$\tau^{-5/2}\abs{\cW(x)}$, each dominator is integrable in $x$ and
$\tau$. Dominated convergence therefore gives
\[
 i(-i\eta M_1)\abs{A_0}^4A_0
 \int_1^\infty\tau^{-5/2-i\alpha}e^{i\eta^2\tau}\dd\tau,
\]
which is \eqref{eq:mean-zero-next-limit}.
\end{proof}

\begin{remark}\label{rem:mean-zero-scope}
The additional estimate \eqref{eq:enhanced-local-asymptotic} is essential
for this next-order statement. The general $\Sigma$ theory gives only the
$O(s^{-3/4+2\delta})$ pointwise remainder in
\autoref{thm:forward-scattering}. Its contribution to the tail is
$O(T^{-7/4+2\delta})$, which is larger than the $T^{-2}$ first-moment
scale. Thus \autoref{prop:mean-zero-next-limit} is a conditional
next-moment calculation. Estimate \eqref{eq:enhanced-local-asymptotic} is
not established here for any nonempty class of solutions. The new shape has the
expansion
\[
 \eta\mathfrak F_\alpha(\eta)
 =\frac{\eta}{\frac32+i\alpha}
 +\frac{i\eta^3}{\frac12+i\alpha}
 +C_\alpha\eta\abs\eta^{3+2i\alpha}+O(\eta^5).
\]
If $M_1A_0\neq0$, the third derivative of the limiting profile is locally
Lipschitz, but the profile has no fourth derivative at $\eta=0$.
\end{remark}

\section{Higher-order final-state expansions away from the distinguished ray}\label{sec:higher}

This section treats profiles that vanish in a neighborhood of the
distinguished ray. For such profiles, the localized quintic term is smaller
than the algebraic orders retained below. The homogeneous symbol classes
and order-by-order cancellation scheme are adaptations of the higher-order
construction of Lindblad and Soffer \cite[Section~5]{LS2006}. We formulate
the associated real-linear inversion explicitly in terms of the complex
profile $A$, including at its zeros, and apply the recursion only to the
translation-invariant cubic equation. The moving localized coefficient is
then controlled as a support-separated remainder; consequently, none of
the recursively generated coefficients depends on $\cW$. Arbitrary-order
expansions for translation-invariant gauge-invariant polynomial
Schr\"odinger equations were also obtained by space-time resonance methods
in \cite{JS2026}.

The coefficient bounds are not uniform across $\xi=-\gamma$, so this is a
global outer expansion for a separated profile class. Differentiating a
term of order $s^{-k}$ produces the factor $-ik$. This makes the recursive
real-linear operator invertible without division by the leading profile.

\subsection{Separated profiles and symbol classes}

Let $A\in C_c^\infty(\R;\C)$ satisfy
\begin{equation}\label{eq:support-separation}
 \dist\bigl(\supp A,\{-\gamma\}\bigr)>0.
\end{equation}
Choose a compact set $K\subset\R$ such that
\[
 \supp A\subset\operatorname{int}K,
 \qquad
 \dist(K,\{-\gamma\})>0.
\]
When $A=0$, any compact $K$ separated from $-\gamma$ is admissible. For
$s\geq1$, set
\[
 \omega_A(s,\xi)=e^{-i\beta\abs{A(\xi)}^2\log s},
 \qquad
 U_0(s,\xi)=\omega_A(s,\xi)A(\xi).
\]

\begin{definition}\label{def:higher-symbol-classes}
For each integer $k\geq1$, define the homogeneous class
\[
 \cH_k(K):=\left\{
 \omega_A(s,\xi)s^{-k}\sum_{j=0}^Jc_j(\xi)(\log s)^j:
 J\in\mathbb N_0,\ c_j\in C_c^\infty(\R;\C),
 \supp c_j\subset K\right\}.
\]
Define the filtered class of finite sums
\[
 \cG_k(K)
 :=\left\{\sum_{r=k}^{R}F_r:
 R\in\mathbb N,\ R\geq k,\ F_r\in\cH_r(K)\right\}.
\]
For $F\in\cG_k(K)$, write
$\pi_rF\in\cH_r(K)$ for its exact
order-$s^{-r}$ component.
\end{definition}

After division by $\omega_A$, distinct powers $s^{-r}$ with polynomial
coefficients in $\log s$ are linearly independent for large $s$. Hence the
sum in \autoref{def:higher-symbol-classes} is direct and $\pi_r$ is well
defined.

The filtered classes satisfy
\[
 (\omega_A^{-1}\cG_j(K))
 (\omega_A^{-1}\cG_k(K))
 \subset\omega_A^{-1}\cG_{j+k}(K),
 \qquad
 \partial_\xi\cG_j(K)\subset\cG_j(K),
 \qquad
 s^{-1}\cG_j(K)\subset\cG_{j+1}(K).
\]
Complex conjugation preserves $\omega_A^{-1}\cG_j(K)$. These rules
and gauge covariance give the order bookkeeping below.

The operator $\partial_\xi^2$ maps $\cG_k(K)$ into itself and
preserves the support in $K$, although it may increase the logarithmic
degree. For every integer $m\geq0$, separation from $-\gamma$ gives
\begin{equation}\label{eq:W-outer-derivative}
 \sup_{\xi\in K}
 \left|\partial_\xi^m\cW\bigl(s(\xi+\gamma)\bigr)\right|
 \leq C_{m,K}s^{-p},
 \qquad s\geq1.
\end{equation}
Indeed, the left-hand side is bounded by
$s^m C_{\cW,m}\la s(\xi+\gamma)\ra^{-p-m}$, which is at most
$C_{m,K}s^{-p}$ on $K$.

\subsection{The principal linearized inversion}

Let
\begin{align*}
 \mathfrak R_0(U)
 &=i\partial_sU+s^{-2}\partial_\xi^2U
 -\beta s^{-1}\abs U^2U,\\
 \mathfrak R(U)
 &=\mathfrak R_0(U)
 +s^{-2}\cW\bigl(s(\xi+\gamma)\bigr)\abs U^4U.
\end{align*}
The cubic principal linearization at $U_0$ is
\begin{equation}\label{eq:L0}
 \cL_0V=i\partial_sV-\frac\beta s
 \left[2\abs{U_0}^2V+U_0^2\overline V\right].
\end{equation}
The complex conjugate in \eqref{eq:L0} is essential because the linearized
map is real-linear rather than complex-linear.

For integers $k\geq1$ and $j\geq0$, and for $X=X(\xi)$, direct
differentiation gives
\begin{align}
 \cL_0\left[\omega_A\frac{(\log s)^j}{s^k}X\right]
 &=\omega_A\frac{(\log s)^j}{s^{k+1}}\mathfrak L_k[X]
 +ij\omega_A\frac{(\log s)^{j-1}}{s^{k+1}}X,
 \label{eq:L0-action}\\
 \mathfrak L_k[X]
 &=\left[-\beta\abs A^2-ik\right]X-\beta A^2\overline X.
 \notag
\end{align}
The second term on the right-hand side of \eqref{eq:L0-action} is omitted
when $j=0$.
The term $-ikX$ makes the algebraic operator invertible even where $A=0$.

The algebraic inversion and the subsequent comparison of logarithmic powers
make explicit, in the present complex-profile formulation, the inversion
mechanism underlying \cite[Lemma~5.1]{LS2006}.

\begin{lemma}\label{lem:Lk-inverse}
For every integer $k\geq1$ and every $\xi\in\R$, the real-linear map
\[
 X\longmapsto \left[-\beta\abs{A(\xi)}^2-ik\right]X
 -\beta A(\xi)^2\overline X
\]
is invertible on $\C$. For $Y\in\C$, its unique preimage is
\begin{equation}\label{eq:Lk-inverse}
 X=\frac{\left[-\beta\abs{A(\xi)}^2+ik\right]Y
 +\beta A(\xi)^2\overline Y}{k^2}.
\end{equation}
The formula is smooth at every zero of $A$.
\end{lemma}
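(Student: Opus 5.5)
The plan is to verify directly that the proposed formula \eqref{eq:Lk-inverse} is a two-sided inverse. At a fixed $\xi$, write $a=-\beta\abs{A(\xi)}^2$ (real), $b=-\beta A(\xi)^2$ (complex), so the map is $\mathcal M X=(a-ik)X+b\overline X$. Note $\abs b=\abs a$. Regarding $\C\simeq\R^2$, invertibility reduces to checking that the composition of $\mathcal M$ with the candidate inverse $\mathcal N Y=k^{-2}\bigl[(a+ik)Y-b\overline Y\bigr]$ is the identity; since both are real-linear maps of a two-dimensional real space, a one-sided identity suffices, but I will compute both compositions for completeness.

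The key computation is the following. We have $\overline{\mathcal N Y}=k^{-2}\bigl[(a-ik)\overline Y-\overline b Y\bigr]$. Then
\[
 k^2\mathcal M\mathcal N Y=(a-ik)(a+ik)Y-(a-ik)b\overline Y+b(a-ik)\overline Y-\abs b^2Y
 =(a^2+k^2-\abs b^2)Y=k^2Y,
\]
using $\abs b^2=\beta^2\abs A^4=a^2$. The cross terms cancel identically because $a$ is real. Similarly, $k^2\mathcal N\mathcal M X=(a+ik)(a-ik)X+(a+ik)b\overline X-b(a+ik)\overline X-\abs b^2X=k^2X$. Hence $\mathcal M$ is bijective and the preimage is unique and given by \eqref{eq:Lk-inverse}. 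Equivalently, one could note that the real determinant of $\mathcal M$ equals $\abs{a-ik}^2-\abs b^2=k^2>0$.

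Finally, smoothness at zeros of $A$ is immediate, because \eqref{eq:Lk-inverse} is a polynomial in $A,\overline A,Y,\overline Y$ divided by the constant $k^2\ge1$. In particular, there is no division by $A$, and at $A(\xi)=0$ the formula reduces to $X=iY/k$. There is no real obstacle here; the only point requiring care is the identity $\abs{\beta A^2}=\beta\abs A^2$ in absolute value, which produces the exact cancellation $a^2-\abs b^2=0$.
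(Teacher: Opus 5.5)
Your proof is correct and is essentially the paper's argument: the paper writes the equation together with its complex conjugate as a $2\times2$ system in $(X,\overline X)$ and applies Cramer's rule with determinant $\abs{-\beta\abs{A(\xi)}^2-ik}^2-\abs{\beta A(\xi)^2}^2=k^2$, which is exactly the cancellation $a^2-\abs b^2=0$ behind your two-sided verification. Your composition check, together with the observation that \eqref{eq:Lk-inverse} involves no division by $A$, establishes invertibility, uniqueness of the preimage, and smoothness at the zeros of $A$.
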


\begin{proof}
Fix $\xi\in\R$. The equation and its complex conjugate form a two-by-two linear system for
$X$ and $\overline X$. Its determinant is
\[
 \left| -\beta\abs{A(\xi)}^2-ik\right|^2
 -\abs{\beta A(\xi)^2}^2=k^2.
\]
Cramer's rule gives \eqref{eq:Lk-inverse}.
\end{proof}

\begin{lemma}\label{lem:symbolic-inversion}
For the fixed $A$, $\beta$, and $K$ above, and every integer $k\geq1$, the map
$\cL_0:\cH_k(K)\to\cH_{k+1}(K)$ is a real-linear
isomorphism. More precisely, let $J\in\mathbb N_0$ and let
$B_0,\ldots,B_J\in C_c^\infty(\R;\C)$ be supported in $K$. If
\[
 F=\omega_A s^{-k-1}\sum_{j=0}^JB_j(\xi)(\log s)^j,
\]
then the unique
$\varphi=\omega_A s^{-k}\sum_{j=0}^JX_j(\xi)(\log s)^j$
satisfying $\cL_0\varphi=F$ is obtained by setting $X_{J+1}=0$ and solving
downward
\begin{equation}\label{eq:triangular-inversion}
 \mathfrak L_k[X_j]=B_j-i(j+1)X_{j+1},
 \qquad j=J,J-1,\ldots,0.
\end{equation}
\end{lemma}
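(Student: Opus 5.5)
The plan is to compute $\cL_0\varphi$ directly from the action formula \eqref{eq:L0-action}, compare coefficients of $\omega_A s^{-k-1}(\log s)^j$, and then use Lemma~\ref{lem:Lk-inverse} to solve the resulting triangular system. Applying \eqref{eq:L0-action} termwise to $\varphi=\omega_A s^{-k}\sum_{j=0}^JX_j(\log s)^j$ gives
\[
 \cL_0\varphi=\omega_A s^{-k-1}\sum_{j=0}^J\Bigl(\mathfrak L_k[X_j]+i(j+1)X_{j+1}\Bigr)(\log s)^j,
\]
with the convention $X_{J+1}=0$. This follows after reindexing the logarithmic-derivative term $ij\,\omega_A s^{-k-1}(\log s)^{j-1}X_j$ to the power $(\log s)^{j}$. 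Two checks are needed here. First, $\omega_A$ depends on $\xi$, but $\cL_0$ involves only $\partial_s$, so no $\xi$-derivatives appear. Second, $\overline{\omega_A}$ combines with $U_0^2=\omega_A^2A^2$ to give back $\omega_A$. These are exactly the facts encoded in \eqref{eq:L0-action}. It follows that $\cL_0$ maps $\cH_k(K)$ into $\cH_{k+1}(K)$, and that supports stay in $K$ because $\mathfrak L_k$ acts pointwise.

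\textbf{Existence.} Given $F$, we define $X_J,X_{J-1},\dots,X_0$ downward by \eqref{eq:triangular-inversion}, using the explicit inverse \eqref{eq:Lk-inverse}. Each right-hand side $B_j-i(j+1)X_{j+1}$ is smooth and supported in $K$ by induction. Formula \eqref{eq:Lk-inverse} is a real-linear combination of $Y$ and $\overline Y$ with coefficients $-\beta\abs A^2+ik$ and $\beta A^2$, which are smooth polynomial expressions in $A$ and $\overline A$. Hence every $X_j$ lies in $C_c^\infty(\R;\C)$ with support in $K$. Substituting into the displayed identity gives $\cL_0\varphi=F$.

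\textbf{Uniqueness and injectivity.} Suppose $\varphi\in\cH_k(K)$ satisfies $\cL_0\varphi=F$. Its logarithmic degree $J'$ may differ from $J$, so pad both expansions with zero coefficients to a common degree. The remark after Definition~\ref{def:higher-symbol-classes} gives linear independence of the functions $s^{-k-1}(\log s)^j$ after dividing by $\omega_A$, applied pointwise in $\xi$ for large $s$. Matching coefficients then forces the triangular system at every degree. At the top degree above $J$, the right-hand side vanishes, so $\mathfrak L_k[X_{J'}]=0$, and Lemma~\ref{lem:Lk-inverse} gives $X_{J'}=0$. Descending in the same way kills every coefficient above degree $J$. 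Below that, the recursion determines each $X_j$ uniquely from $X_{j+1}$ and $B_j$. In particular $F=0$ forces $\varphi=0$, which is injectivity; together with existence this makes $\cL_0$ a bijection $\cH_k(K)\to\cH_{k+1}(K)$. Real-linearity is immediate from \eqref{eq:L0}.

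The only delicate point is this uniqueness step, specifically justifying coefficient comparison when $\varphi$ has a priori larger log-degree than $F$. Once linear independence of $\{s^{-k-1}(\log s)^j\}_j$ is applied at each fixed $\xi$, it reduces to the invertibility in Lemma~\ref{lem:Lk-inverse}.
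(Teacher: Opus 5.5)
Your proposal is correct and follows the paper's own argument. Like the paper, you substitute $\varphi$ into \eqref{eq:L0-action}, compare coefficients of the logarithmic powers to obtain \eqref{eq:triangular-inversion}, and solve downward with Lemma~\ref{lem:Lk-inverse}, whose explicit formula preserves smoothness and support in $K$. Your padding argument for a competitor of larger logarithmic degree spells out the injectivity on all of $\cH_k(K)$, which the paper leaves implicit in the phrase ``compare logarithmic powers.''
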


\begin{proof}
Substitute $\varphi$ into \eqref{eq:L0-action} and compare
logarithmic powers. This gives \eqref{eq:triangular-inversion}.
\autoref{lem:Lk-inverse} determines each $X_j$ uniquely. Formula
\eqref{eq:Lk-inverse} preserves smoothness and support, including at zeros
of $A$.
\end{proof}

We record the first recursion step. By \eqref{eq:UA-second-derivative},
\begin{equation*}
 \partial_\xi^2U_0(s,\xi)
 =\omega_A(s,\xi)\sum_{j=0}^2D_{A,j}(\xi)(\log s)^j,
\end{equation*}
where $D_{A,0},D_{A,1},D_{A,2}\in C_c^\infty(\R;\C)$ are supported in $K$. The leading
dispersive residual therefore has the form
\[
 \omega_A(s,\xi)s^{-2}\sum_{j=0}^2D_{A,j}(\xi)(\log s)^j.
\]
We seek
\begin{equation*}
 \phi_1(s,\xi)=\omega_A(s,\xi)s^{-1}
 \sum_{j=0}^2X_j(\xi)(\log s)^j.
\end{equation*}
Apply \autoref{lem:symbolic-inversion} to the negative leading residual
$-s^{-2}\partial_\xi^2U_0$. Starting with the coefficient of
$(\log s)^2$, \eqref{eq:triangular-inversion} determines $X_2$, then
$X_1$, and finally $X_0$. Later steps have the same triangular form.

\subsection{Construction and error estimate}

The leading residual is
\begin{equation*}
 \mathfrak R(U_0)=s^{-2}\partial_\xi^2U_0
 +s^{-2}\cW\bigl(s(\xi+\gamma)\bigr)\abs{U_0}^4U_0.
\end{equation*}
The first term belongs to $\cG_2(K)$. For every integer $m\geq0$,
\eqref{eq:W-outer-derivative} and the derivatives of $\omega_A$ give a
constant $C_m>0$ and an integer $\mathfrak d_m\geq0$ such that
\begin{equation*}
 \sup_{\xi\in K}\left|
 \partial_\xi^m\left[
 s^{-2}\cW\bigl(s(\xi+\gamma)\bigr)\abs{U_0}^4U_0
 \right]\right|
 \leq C_m s^{-p-2}(1+\log s)^{\mathfrak d_m}.
\end{equation*}

\begin{proposition}\label{prop:UN}
Let $\beta\in\R$, let $A\in C_c^\infty(\R;\C)$, and let
$K\subset\R$ be compact with $\supp A\subset\operatorname{int}K$.
For every integer $N\geq1$, there exists
\begin{equation*}
 U_N=U_0+\sum_{k=1}^N\phi_k,
 \qquad
 \phi_k\in\cH_k(K),
\end{equation*}
such that, for every integer $m\geq0$, there are constants
$C^{(0)}_{N,m}>0$ and $\mathfrak d^{(0)}_{N,m}\in\mathbb N_0$ for which
\begin{equation}\label{eq:residual-R0-UN}
 \norm{\partial_\xi^m\mathfrak R_0(U_N)(s)}_2
 \leq C^{(0)}_{N,m}s^{-N-2}
 (1+\log s)^{\mathfrak d^{(0)}_{N,m}},
 \qquad s\geq1.
\end{equation}
The corrections $\phi_1,\ldots,\phi_N$ are determined by $\beta$ and $A$
alone. They are independent of $\gamma$ and $\cW$ and coincide with the
coefficients for $\cW\equiv0$. The constants in
\eqref{eq:residual-R0-UN} depend only on $N$, $m$, $\beta$, $A$, and $K$.

Assume in addition that $\gamma\in\R$, that
$\dist(K,\{-\gamma\})>0$, and that $\cW$ satisfies
\eqref{eq:W-assumption}. Then there are constants $C_{N,m}>0$ and
$\mathfrak d_{N,m}\in\mathbb N_0$ such that
\begin{equation}\label{eq:residual-UN}
 \norm{\partial_\xi^m\mathfrak R(U_N)(s)}_2
 \leq C_{N,m}\left[
 s^{-N-2}+s^{-p-2}\right](1+\log s)^{\mathfrak d_{N,m}},
 \qquad s\geq1.
\end{equation}
If $\cW\in\cS(\R)$, then for every prescribed $p_*>0$ the second term in
\eqref{eq:residual-UN} may be replaced by
$s^{-p_*-2}(1+\log s)^{\mathfrak d_{N,m}}$, with the constant allowed to depend on
$p_*$ and on finitely many Schwartz seminorms of $\cW$. The constants in
\eqref{eq:residual-UN} may also depend on $\gamma$, $\cW$, and the decay
parameters in \eqref{eq:W-assumption}.
\end{proposition}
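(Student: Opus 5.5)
The plan is to build $U_N$ by induction on $N$, using the symbolic inversion of \autoref{lem:symbolic-inversion} to remove, one order at a time, the leading part of the residual of the translation-invariant operator $\mathfrak R_0$. The localized quintic term is then handled separately as a support-separated remainder via \eqref{eq:W-outer-derivative}.

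\emph{Inductive hypothesis and first step.} Suppose we have $U_{N-1}=U_0+\sum_{k<N}\phi_k$ with $\phi_k\in\cH_k(K)$. The hypothesis is that $\mathfrak R_0(U_{N-1})\in\cG_{N+1}(K)$, i.e.\ it is a finite sum of symbols of orders $\ge N+1$. For $N=1$ this holds because $\mathfrak R_0(U_0)=s^{-2}\partial_\xi^2U_0\in\cG_2(K)$: the cubic term cancels against $i\partial_sU_0$.

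\emph{Inductive step.} Set $\phi_N:=-\cL_0^{-1}\pi_{N+1}\mathfrak R_0(U_{N-1})\in\cH_N(K)$, using \autoref{lem:symbolic-inversion}. Then expand
\[
\mathfrak R_0(U_{N-1}+\phi_N)=\mathfrak R_0(U_{N-1})+\cL_0\phi_N+s^{-2}\partial_\xi^2\phi_N-\beta s^{-1}Q(U_{N-1},\phi_N),
\]
where $Q$ collects two kinds of terms:
\begin{itemize}
\item the difference between the linearization at $U_{N-1}$ and at $U_0$, which involves $\phi_k\phi_N$-type products with $k\ge1$;
\item the quadratic and cubic terms in $\phi_N$.
\end{itemize}
Order bookkeeping, using the algebra rules for $\omega_A^{-1}\cG_j$ and gauge covariance (each product carries one factor $\omega_A$ after conjugation), places each piece as follows:
\begin{itemize}
\item $s^{-2}\partial_\xi^2\phi_N\in\cG_{N+2}$;
\item the linearization difference lies in $s^{-1}\cG_{N+1}\subset\cG_{N+2}$;
\item the higher terms in $\phi_N$ lie in $s^{-1}\cG_{2N}\subset\cG_{N+2}$.
\end{itemize}
Since the $\pi_{N+1}$ component is cancelled by construction, we get $\mathfrak R_0(U_N)\in\cG_{N+2}(K)$.

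\emph{Residual bound for $\mathfrak R_0$.} Any element of $\cG_{N+2}(K)$ with compactly supported smooth coefficients satisfies \eqref{eq:residual-R0-UN}. The reason is that $\partial_\xi^m$ falling on $\omega_A$ only produces powers of $\log s$, and $|\omega_A|=1$. Nothing here involves $\gamma$ or $\cW$, so the $\phi_k$ coincide with the $\cW\equiv0$ construction. The one subtle point is that the inversion must be uniform at zeros of $A$, and this is exactly what \eqref{eq:Lk-inverse} provides.

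\emph{Adding the localized quintic term.} We have $\mathfrak R(U_N)=\mathfrak R_0(U_N)+s^{-2}\cW(s(\xi+\gamma))|U_N|^4U_N$. Since $U_N$ is supported in $K$, the product vanishes off $K$. Leibniz's rule, \eqref{eq:W-outer-derivative}, and bounded $\xi$-derivatives of $U_N$ with polylog growth give the sup bound $C s^{-p-2}(1+\log s)^{\mathfrak d}$ on $K$. Integrating over the compact set $K$ converts this to the $L^2$ bound, which yields \eqref{eq:residual-UN}.

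\emph{Schwartz coefficient.} For $\cW\in\cS$, replace $p$ in \eqref{eq:W-outer-derivative} by an arbitrary $p_*$. This is possible because $|\partial^m\cW(z)|\le C\langle z\rangle^{-p_*-m}$ with a constant depending on finitely many seminorms.

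The main obstacle is the bookkeeping in the inductive step. One must verify that every nonlinear cross term genuinely lands in $\cG_{N+2}$ with the correct single phase factor $\omega_A$, including conjugated terms such as $U_0^2\overline{\phi_N}$. One must also check that the logarithmic degree stays finite, so that $\pi_{N+1}$ and the triangular solve \eqref{eq:triangular-inversion} are well defined.
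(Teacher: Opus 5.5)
Your proposal is correct and follows essentially the same route as the paper: the same inductive hypothesis $\mathfrak R_0(U_n)\in\cG_{n+2}(K)$, cancellation of the top component $\pi_{n+2}\mathfrak R_0(U_n)$ by \autoref{lem:symbolic-inversion}, and the same split of the new residual into $s^{-2}\partial_\xi^2\phi_{n+1}$, the linearization difference $[D\cN_3(U_n)-D\cN_3(U_0)][\phi_{n+1}]$, and the quadratic remainder. The localized quintic term is then estimated separately via \eqref{eq:W-outer-derivative} and support in $K$, as in the paper; the only differences are cosmetic, namely your index shift $N-1\to N$ and a slightly loose description of the cross terms, which also contain factors of $U_0$ alongside some $\phi_k$ with $k\geq1$.
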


\begin{proof}
We construct the corrections recursively using the translation-invariant
operator $\mathfrak R_0$. Its base
residual $\mathfrak R_0(U_0)=s^{-2}\partial_\xi^2U_0$ belongs to
$\cH_2(K)$. Suppose inductively that
\[
 U_n=U_0+\sum_{k=1}^n\phi_k,
 \qquad \phi_k\in\cH_k(K),
 \qquad \mathfrak R_0(U_n)\in\cG_{n+2}(K).
\]
Let $\mathscr E_{n+2}=\pi_{n+2}\mathfrak R_0(U_n)$. By
\autoref{lem:symbolic-inversion}, there is a unique
$\phi_{n+1}\in\cH_{n+1}(K)$ such that
\begin{equation}\label{eq:induction-principal-cancel}
 \cL_0\phi_{n+1}=-\mathscr E_{n+2}.
\end{equation}
For $V,H\in\C$, define the real-linear derivative and quadratic remainder
by
\begin{align*}
 D\cN_3(V)[H]&=2\abs V^2H+V^2\overline H,\\
 \cQ_3(V,H)&=\cN_3(V+H)-\cN_3(V)-D\cN_3(V)[H].
\end{align*}
Thus $\cQ_3(V,H)$ is a sum of terms containing at least two factors from
$H$ and $\overline H$. Set $U_{n+1}=U_n+\phi_{n+1}$. Then
\begin{align}
 \mathfrak R_0(U_{n+1})
 &=\mathfrak R_0(U_n)+\cL_0\phi_{n+1}
 +s^{-2}\partial_\xi^2\phi_{n+1}\notag\\
 &\quad-\frac\beta s
 \left\{[D\cN_3(U_n)-D\cN_3(U_0)][\phi_{n+1}]
 +\cQ_3(U_n,\phi_{n+1})\right\}.
 \notag
\end{align}

Write $U_n=\omega_A\widetilde U_n$. Gauge covariance gives
\[
 \cN_3(U_n)=\omega_A\abs{\widetilde U_n}^2\widetilde U_n,
 \qquad
 \cN_5(U_n)=\omega_A\abs{\widetilde U_n}^4\widetilde U_n.
\]
The filtered-algebra rules imply
\begin{align*}
 s^{-2}\partial_\xi^2\phi_{n+1}
 &\in\cG_{n+3}(K),\\
 s^{-1}[D\cN_3(U_n)-D\cN_3(U_0)][\phi_{n+1}]
 &\in\cG_{n+3}(K),\\
 s^{-1}\cQ_3(U_n,\phi_{n+1})
 &\in\cG_{2n+3}(K)
 \subset\cG_{n+3}(K).
\end{align*}
The second inclusion uses $U_n-U_0\in\cG_1(K)$. After
\eqref{eq:induction-principal-cancel}, these inclusions give
$\mathfrak R_0(U_{n+1})\in\cG_{n+3}(K)$ and close the induction.

The recursion has been applied only to $\mathfrak R_0$. It therefore uses
neither $\gamma$ nor $\cW$, which proves
\eqref{eq:residual-R0-UN} and the asserted independence of the
coefficients. Finally, \eqref{eq:W-outer-derivative}, the compact support in $K$, and
Leibniz's rule imply, for every fixed $m$,
\[
 \norm{\partial_\xi^m\left[
 s^{-2}\cW(s(\xi+\gamma))\abs{U_N}^4U_N\right]}_2
 \leq C_{N,m}s^{-p-2}(1+\log s)^{\mathfrak d_{N,m}}.
\]
Combining this with
$\mathfrak R_0(U_N)\in\cG_{N+2}(K)$ proves
\eqref{eq:residual-UN}. If $\cW$ is Schwartz,
\eqref{eq:W-outer-derivative} holds with any prescribed finite decay
exponent.
\end{proof}

Define the physical approximation
\begin{equation*}
 u_N(t,y)=t^{-1/2}e^{iy^2/(4t)}U_N(t,y/t)
\end{equation*}
and its counterpart in the original variables
\begin{equation}\label{eq:vN}
 v_N(t,x)=t^{-1/2}e^{i(x-\gamma t)^2/(4t)}
 U_N\left(t,\frac{x-\gamma t}{t}\right).
\end{equation}
The phase in \eqref{eq:vN} is written in the comoving coordinate required
by the translation $y=x-\gamma t$.

\begin{theorem}\label{thm:higher}
Let $\beta,\gamma\in\R$, let $\cW$ be real-valued and satisfy
\eqref{eq:W-assumption}, and
let $A\in C_c^\infty(\R;\C)$ satisfy
\eqref{eq:support-separation} and \eqref{eq:effective-smallness}. Let
$N\geq1$ be an integer satisfying
\[
 N\leq\lfloor p\rfloor
 \qquad\text{or}\qquad
 \cW\in\cS(\R).
\]
Choose a compact set $K$
such that $\supp A\subset\operatorname{int}K$ and
$\dist(K,\{-\gamma\})>0$. Let $U_N$ be the pure-cubic profile furnished by
\autoref{prop:UN}, and define $u_N$ and $v_N$ by the formulas above. Let $u$
be the solution furnished by \autoref{thm:wave-operator}, and set
$v(t,x)=u(t,x-\gamma t)$. Then there are constants $C_N>0$, $T_N\geq2$,
and an integer $\mathfrak d_N^{\mathrm{out}}\geq0$ such that
\begin{equation}\label{eq:higher-error}
 \norm{u(t)-u_N(t)}_{H^1_y}
 +\norm{u(t)-u_N(t)}_{L^\infty_y}
 \leq C_Nt^{-N-1}(1+\log t)^{\mathfrak d_N^{\mathrm{out}}}
\end{equation}
for every $t\geq T_N$. Translation to the original variables also gives
\[
 \norm{v(t)-v_N(t)}_{H^1_x}
 +\norm{v(t)-v_N(t)}_{L^\infty_x}
 \leq C_Nt^{-N-1}(1+\log t)^{\mathfrak d_N^{\mathrm{out}}}.
\]
For fixed $\beta$, $\gamma$, $A$, and $N$, the profiles $U_N$, $u_N$, and
$v_N$ are independent of $\cW$. The constants $C_N$ and $T_N$ may depend
on $\cW$.
\end{theorem}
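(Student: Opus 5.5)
The plan is to repeat the final-state contraction of \autoref{thm:wave-operator}, now around the improved background $u_N$, and then identify the resulting solution with $u$ using the tail uniqueness of \autoref{lem:tail-uniqueness}.

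\textbf{Residual of $u_N$.} Since $u_N(t,y)=t^{-1/2}e^{iy^2/(4t)}U_N(t,y/t)$, the self-similar identity gives
\[
 (i\partial_t+\partial_y^2)u_N-\beta\abs{u_N}^2u_N+\cW(y+\gamma t)\abs{u_N}^4u_N
 =t^{-1/2}e^{iy^2/(4t)}\mathfrak R(U_N)(t,y/t)=:\cR_N .
\]
As in \autoref{lem:residual}, we have $\norm{\cR_N(t)}_2=t^{-1/2}\cdot t^{1/2}\norm{\mathfrak R(U_N)}_{L^2_\xi}$ after the change of variables. For $\partial_y\cR_N$ we differentiate in $y$ and obtain the factors $\tfrac{i\xi}{2}$ and $t^{-1}\partial_\xi$. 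The factor $\xi$ is harmless, because every term of $\mathfrak R_0(U_N)$ is supported in the compact set $K$, and the localized term carries a factor of $U_N$, which is also supported in $K$.

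\autoref{prop:UN} then gives
\[
 \norm{\cR_N(t)}_{H^1}\lesssim\bigl(t^{-N-2}+t^{-p-2}\bigr)(1+\log t)^{\mathfrak d}.
\]
If $N\le\lfloor p\rfloor\le p$, or if $\cW$ is Schwartz and we take $p_*=N$, this is $\lesssim t^{-N-2}(1+\log t)^{\mathfrak d}$. Since $U_N$ is a smooth, compactly supported symbol with bounded derivatives, we also get
\[
 \norm{u_N(t)}_{W^{1,\infty}}\le a_Nt^{-1/2},\qquad \norm{U_N-U_0}_{W^{1,\infty}}\lesssim t^{-1}(\log t)^{c}.
\]

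\textbf{Contraction around $u_N$.} Write $u=u_N+w$. The correction $w$ satisfies the same equation as in the proof of \autoref{thm:wave-operator}, with $u_A$ replaced by $u_N$ and $\cR_A$ replaced by $\cR_N$. We solve it by the final-state Duhamel map in the space
\[
 \sup_{t\ge T_0}t^{N+1}(1+\log t)^{-\mathfrak d'}\norm{w(t)}_{H^1},
\]
using \autoref{lem:nonlinear-H1} with $a$ replaced by $a_N$.

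The source term contributes
\[
 \int_t^\infty s^{-N-2}(\log s)^{\mathfrak d}\,ds\lesssim t^{-N-1}(\log t)^{\mathfrak d}.
\]
The only delicate term is the long-range cubic linearization, of size $C\abs\beta a_N^2s^{-1}$. Its weighted integral gives
\[
 C\abs\beta a_N^2\, t^{N+1}(\log t)^{-\mathfrak d'}\int_t^\infty s^{-N-2}(\log s)^{\mathfrak d'}\,ds\le\frac{C\abs\beta a_N^2}{N+1}(1+o(1)).
\]
This is the main obstacle. I would handle it by noting that $a_N\to\mathfrak a_\beta(A)$ up to $O(T_0^{-1/2})$, because $u_N-u_A$ has an extra $t^{-1}$ decay, so the effective smallness \eqref{eq:effective-smallness} makes the coefficient less than $1/4$ once $T_0$ is large; the gain $1/(N+1)$ only helps. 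All remaining terms carry extra powers of $s^{-1/2}$ and are absorbed by enlarging $T_0$. This yields a fixed point $w_N$ with the decay claimed in \eqref{eq:higher-error}.

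\textbf{Identification and conclusion.} Next we identify $u_N+w_N$ with $u$. We have
\[
 \norm{u_N(t)-u_A(t)}_{H^1}\lesssim t^{-1}(\log t)^{c},
\]
since $\phi_k\in\cH_k(K)$ and the $H^1_y$ norm of $t^{-1/2}e^{iy^2/(4t)}\phi_k(t,y/t)$ is bounded by $t^{-k}$ times powers of $\log t$. Hence both $u_N+w_N$ and $u$ lie in the class \eqref{eq:tail-uniqueness-class} with the same profile $A$. By \autoref{lem:tail-uniqueness}, they coincide on a common tail, and by \autoref{prop:local-theory} they coincide for all $t\ge T_N$.

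The $L^\infty$ bound follows from $H^1\hookrightarrow L^\infty$. The estimate for $v$ follows from the translation $x=y+\gamma t$, which is an isometry on $H^1$ and $L^\infty$. Finally, independence of $U_N$, $u_N$, $v_N$ from $\cW$ is exactly the statement in \autoref{prop:UN}.
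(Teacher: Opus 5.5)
Your proposal is correct and follows essentially the same route as the paper: you bound the residual of $u_N$ via \autoref{prop:UN}, run the final-state contraction around $u_N$ in the space weighted by $t^{-N-1}(1+\log t)^{\mathfrak d}$, control the only nonintegrable term (the cubic linearization, with coefficient $C\abs\beta a^2/(N+1)$) by the effective smallness condition, and identify the fixed point with $u$ through \autoref{lem:tail-uniqueness}. The differences are cosmetic: you absorb $u_N-u_A$ into an effective constant $a_N\le\mathfrak a_\beta(A)+o(1)$ rather than tracking the cross terms separately as the paper does in $\Lambda_N$, and in the Schwartz case you take $p_*=N$ where the paper takes $N+1$.
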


\begin{proof}
Define the decay exponent
\[
 p_N=
 \begin{cases}
 p,&\cW\notin\cS(\R),\\
 N+1,&\cW\in\cS(\R).
 \end{cases}
\]
In the second case, the Schwartz conclusion of \autoref{prop:UN} applies
with exponent $p_N$.
Put $\mathcal E_N=\mathfrak R(U_N)$. The physical residual of $u_N$ is
\begin{equation*}
 R_N(t,y)=t^{-1/2}e^{iy^2/(4t)}\mathcal E_N(t,y/t).
\end{equation*}
Consequently
\[
 \norm{R_N(t)}_2=\norm{\mathcal E_N(t)}_2
\]
and, with $\xi=y/t$,
\begin{equation*}
 \partial_yR_N(t,y)=t^{-1/2}e^{iy^2/(4t)}
 \left[\frac{i\xi}{2}\mathcal E_N(t,\xi)
 +t^{-1}\partial_\xi\mathcal E_N(t,\xi)\right].
\end{equation*}
Every term in $\mathcal E_N$ is supported in the fixed compact set $K$. Thus
\eqref{eq:residual-UN} for $m=0,1$, together with the change of variables
$y=t\xi$, gives, with
$\mathfrak d_N^{\mathrm{res}}
:=\max\{\mathfrak d_{N,0},\mathfrak d_{N,1}\}$,
\begin{equation}\label{eq:RN-physical}
 \norm{R_N(t)}_{H^1_y}
 \leq C_N\left[t^{-N-2}(1+\log t)^{\mathfrak d_N^{\mathrm{res}}}
 +t^{-p_N-2}(1+\log t)^{\mathfrak d_N^{\mathrm{res}}}\right].
\end{equation}
Because $p_N\geq N$, the second term is bounded by the first after
increasing $C_N$.

Set
$\Xi_N=U_N-U_0=\sum_{k=1}^N\phi_k$. Since
$\Xi_N\in\cG_1(K)$, the symbol form of the corrections and their fixed compact
support imply, for some integer $J_N\geq0$,
\begin{align}
 \norm{u_N(t)-u_A(t)}_{H^1_y}
 &\leq C_Nt^{-1}(1+\log t)^{J_N},
 \notag\\
 \norm{u_N(t)-u_A(t)}_{W^{1,\infty}_y}
 &\leq C_Nt^{-3/2}(1+\log t)^{J_N}.
 \label{eq:uN-uA-W1inf}
\end{align}
Indeed, the $t^{-1/2}$ prefactor and the change of variables preserve the
$L^2$ size of $\Xi_N$, which is bounded by
$t^{-1}(1+\log t)^{J_N}$, whereas physical differentiation
produces
\[
 t^{-1/2}e^{iy^2/(4t)}
 \left(\frac{i\xi}{2}\Xi_N+t^{-1}\partial_\xi\Xi_N\right).
\]
The compact $\xi$-support controls the first term. Equations
\eqref{eq:a-beta-A} and \eqref{eq:uN-uA-W1inf} therefore give
\begin{equation*}
 \norm{u_N(t)}_{W^{1,\infty}}
 \leq \mathfrak a_\beta(A)t^{-1/2}
 +C_Nt^{-3/2}(1+\log t)^{J_N}.
\end{equation*}
Recall that $\cW_t(y)=\cW(y+\gamma t)$. Set
$a=\mathfrak a_\beta(A)$ and $\ell(t)=1+\log t$. Repeating the
algebra in \autoref{lem:nonlinear-H1} around $u_N$ gives the following
bound. For $t\geq2$ and $w_1,w_2\in H^1(\R)$, set
$\mu_N=\norm{w_1}_{H^1}+\norm{w_2}_{H^1}$. Then
\begin{align*}
 &\norm{\beta[\cN_3(u_N+w_1)-\cN_3(u_N+w_2)]
 -\cW_t[\cN_5(u_N+w_1)-\cN_5(u_N+w_2)]}_{H^1}\\
 &\hspace{30mm}\leq
 \Lambda_N(t,\mu_N)\norm{w_1-w_2}_{H^1},
\end{align*}
where
\begin{align}
 \Lambda_N(t,\mu_N)
 &\leq C\abs\beta\left[
 a^2t^{-1}+C_Nt^{-2}\ell(t)^{2J_N}
 +\left(at^{-1/2}+C_Nt^{-3/2}\ell(t)^{J_N}\right)\mu_N+\mu_N^2
 \right]\notag\\
 &\quad+C_N\left[
 t^{-2}\ell(t)^{4J_N}
 +t^{-3/2}\ell(t)^{3J_N}\mu_N
 +t^{-1}\ell(t)^{2J_N}\mu_N^2
 +t^{-1/2}\ell(t)^{J_N}\mu_N^3+\mu_N^4
 \right].
 \label{eq:uN-Lambda}
\end{align}
The only nonintegrable linear contribution to this bound is
$C\abs\beta a^2t^{-1}$. Every other term in
\eqref{eq:uN-Lambda} has an additional negative power of $t$ once the
decay of the correction is inserted.

We solve the final-state problem with background $u_N$. Set
\[
 \varpi_N(t)=t^{-N-1}\ell(t)^{\mathfrak d_N^{\mathrm{out}}},
\]
where $\mathfrak d_N^{\mathrm{out}}$ is chosen so that
\begin{equation}\label{eq:KN-choice}
 \mathfrak d_N^{\mathrm{out}}
 \geq\max\{\mathfrak d_N^{\mathrm{res}},4J_N\}.
\end{equation}
We use the elementary estimate
\begin{equation}\label{eq:higher-log-tail}
 \int_t^\infty s^{-b}\ell(s)^m\dd s
 \leq C_{b,m}t^{1-b}\ell(t)^m,
 \qquad b>1,\quad m\geq0,\quad t\geq2.
\end{equation}
Let $T_N\geq2$, to be chosen below, and define
\[
 \mathcal Y_N=\left\{w\in C([T_N,\infty);H^1(\R)):
 \norm w_{\mathcal Y_N}<\infty\right\},
 \qquad
 \norm w_{\mathcal Y_N}
 :=\sup_{t\geq T_N}\varpi_N(t)^{-1}\norm{w(t)}_{H^1_y}.
\]
Writing $u=u_N+w$, put
\[
 \mathcal M_N(w)=\beta\bigl[\cN_3(u_N+w)-\cN_3(u_N)\bigr]
 -\cW_t\bigl[\cN_5(u_N+w)-\cN_5(u_N)\bigr],
\]
and define the final-state map
\begin{equation*}
 \cT_Nw(t)=i\int_t^\infty e^{i(t-s)\partial_y^2}
 \bigl[\mathcal M_N(w)(s)-R_N(s)\bigr]\dd s.
\end{equation*}
Equations \eqref{eq:RN-physical}, \eqref{eq:KN-choice}, and
\eqref{eq:higher-log-tail} give the following bound for the normalized
source size when $T\geq2$.
\begin{align}
 \mathfrak B_N(T)
 &:=\sup_{t\geq T}\varpi_N(t)^{-1}
 \int_t^\infty\norm{R_N(s)}_{H^1}\dd s\notag\\
 &\leq C_N\sup_{t\geq T}
 \left[
 \ell(t)^{\mathfrak d_N^{\mathrm{res}}-\mathfrak d_N^{\mathrm{out}}}
 +t^{N-p_N}
 \ell(t)^{\mathfrak d_N^{\mathrm{res}}-\mathfrak d_N^{\mathrm{out}}}
 \right]
 \leq C_N.
 \notag
\end{align}
At the endpoint $p_N=N$, which can occur only when
$\cW\notin\cS(\R)$ and $p=N$ is an integer, the localized part
is uniformly bounded but has no algebraic gain. The choice
$\mathfrak d_N^{\mathrm{out}}\geq\mathfrak d_N^{\mathrm{res}}$ controls
its logarithmic factor.

Let $\mathsf R>0$ and suppose
$\norm{w_j}_{\mathcal Y_N}\leq\mathsf R$ for $j=1,2$. Set
\[
 d=w_1-w_2,
 \qquad
 \mu_N(s)=\norm{w_1(s)}_{H^1}+\norm{w_2(s)}_{H^1}.
\]
Then
\begin{equation}\label{eq:higher-ball-pointwise}
 \mu_N(s)\leq2\mathsf R\varpi_N(s),
 \qquad
 \norm{d(s)}_{H^1}
 \leq\norm d_{\mathcal Y_N}\varpi_N(s).
\end{equation}
Substituting \eqref{eq:higher-ball-pointwise} into
\eqref{eq:uN-Lambda}, using the unitarity of the free group on $H^1$,
and applying \eqref{eq:higher-log-tail}, we obtain
\begin{equation}\label{eq:higher-contraction-normalized}
 \norm{\cT_Nw_1-\cT_Nw_2}_{\mathcal Y_N}
 \leq
 \left[
 C\abs\beta a^2
 \left(\frac1{N+1}+o_{T_N\to\infty}(1)\right)
 +\varepsilon_N(T_N,\mathsf R)
 \right]\norm d_{\mathcal Y_N},
\end{equation}
where one may take
\begin{align}
 \varepsilon_N(T,\mathsf R)
 &\leq C_NT^{-1}\ell(T)^{4J_N}
+C\abs\beta a\mathsf R
 T^{-N-1/2}\ell(T)^{\mathfrak d_N^{\mathrm{out}}}+C_N\mathsf R
 T^{-N-3/2}\ell(T)^{3J_N+\mathfrak d_N^{\mathrm{out}}}\notag\\
 &\quad+C\abs\beta\mathsf R^2
 T^{-2N-1}\ell(T)^{2\mathfrak d_N^{\mathrm{out}}} + C_N\mathsf R^2
 T^{-2N-2}\ell(T)^{2J_N+2\mathfrak d_N^{\mathrm{out}}}\notag\\
 &\quad+C_N\mathsf R^3
 T^{-3N-5/2}\ell(T)^{J_N+3\mathfrak d_N^{\mathrm{out}}} + C_N\mathsf R^4
 T^{-4N-3}\ell(T)^{4\mathfrak d_N^{\mathrm{out}}}.
 \label{eq:higher-epsilon}
\end{align}
Indeed, the first term in the bracket in
\eqref{eq:higher-contraction-normalized} comes from the long-range cubic
linearization and the asymptotic identity
\[
 \sup_{t\geq T_N}\varpi_N(t)^{-1}
 \int_t^\infty s^{-1}\varpi_N(s)\dd s
 =\frac1{N+1}+o_{T_N\to\infty}(1).
\]
For all other terms, the powers in \eqref{eq:higher-epsilon} follow,
respectively, from the coefficients
\[
 s^{-2}\ell(s)^{4J_N},\quad
 s^{-1/2}\mu_N,\quad
 s^{-3/2}\ell(s)^{3J_N}\mu_N,\quad
 \mu_N^2,\quad
 s^{-1}\ell(s)^{2J_N}\mu_N^2,\quad
 s^{-1/2}\ell(s)^{J_N}\mu_N^3,\quad
 \mu_N^4
\]
in \eqref{eq:uN-Lambda}. Since $N\geq1$, every term on the
right-hand side of \eqref{eq:higher-epsilon} tends to zero as
$T\to\infty$ for each fixed $\mathsf R$.

After the same universal decrease of $\eta_0$ used in
\autoref{lem:tail-uniqueness}, condition
\eqref{eq:effective-smallness} makes the long-range term in
\eqref{eq:higher-contraction-normalized} at most $1/4$ for each fixed
$N\geq1$ after $T_N$ is sufficiently enlarged. To avoid any dependence of
the radius on a subsequently enlarged starting time, first choose a
preliminary $T_N^{(0)}\geq2$, set
\[
 \mathfrak B_N^{(0)}=\mathfrak B_N(T_N^{(0)}),
 \qquad \mathsf R=4\left(1+\mathfrak B_N^{(0)}\right),
\]
and then increase $T_N\geq T_N^{(0)}$ until
$\varepsilon_N(T_N,\mathsf R)\leq1/4$. Restricting the time interval can only
decrease the normalized source bound, so $ \norm{\cT_N0}_{\mathcal Y_N}
 \leq\mathfrak B_N^{(0)}<\frac{\mathsf R}{4}$.
Equation \eqref{eq:higher-contraction-normalized} now gives
\[
 \norm{\cT_Nw_1-\cT_Nw_2}_{\mathcal Y_N}
 \leq\frac12\norm{w_1-w_2}_{\mathcal Y_N}.
\]
Moreover, if $\norm w_{\mathcal Y_N}\leq\mathsf R$, then
\[
 \norm{\cT_Nw}_{\mathcal Y_N}
 \leq\norm{\cT_N0}_{\mathcal Y_N}
 +\norm{\cT_Nw-\cT_N0}_{\mathcal Y_N}
 \leq\frac{\mathsf R}{4}+\frac{\mathsf R}{2}
 =\frac{3\mathsf R}{4}.
\]
Thus $\cT_N$ is a contraction of the closed radius-$\mathsf R$ ball in
$\mathcal Y_N$ into itself. Banach's fixed-point theorem gives a unique
correction $w\in\mathcal Y_N$. Setting $\widetilde u=u_N+w$, we have
\[
 \norm{\widetilde u(t)-u_N(t)}_{H^1}
 \leq\mathsf R t^{-N-1}(1+\log t)^{\mathfrak d_N^{\mathrm{out}}},
 \qquad t\geq T_N.
\]
The one-dimensional Sobolev embedding gives the same bound in
$L^\infty$.
We identify $\widetilde u$ with the solution $u$ from
\autoref{thm:wave-operator}. The original solution satisfies
\[
 \norm{u(t)-u_A(t)}_{H^1}\lesssim
 t^{-1}(1+\abs\beta\log t)^2,
\]
while, by the construction above and the form of $u_N-u_A$,
\[
 \norm{\widetilde u(t)-u_A(t)}_{H^1}
 \lesssim t^{-1}(1+\log t)^{K'}
\]
for some finite $K'$. Hence both differences are $O(t^{-3/4})$ on a
sufficiently large common tail. \autoref{lem:tail-uniqueness} applies and
yields $u=\widetilde u$ there. This proves \eqref{eq:higher-error} for the
solution from \autoref{thm:wave-operator}. Since translation is an isometry
on $H^1$ and $L^\infty$, the asserted estimate for $v-v_N$ follows.
\end{proof}

\begin{corollary}\label{cor:schwartz-invisibility}
Let $\cW_1,\cW_2\in\cS(\R)$ be real-valued. For $j=1,2$, let $u^{(j)}$
be the solution produced by the modified wave operator with coefficient
$\cW_j$. Assume that the parameters $\beta$ and $\gamma$ and the separated
profile $A$ are the same for both solutions, and that $A$ satisfies
\eqref{eq:effective-smallness} for both coefficients.
Then, for every $\nu_{\mathrm{dec}}>0$, there are
$C_{\nu_{\mathrm{dec}}},T_{\nu_{\mathrm{dec}}}>0$ such that
\[
 \norm{u^{(1)}(t)-u^{(2)}(t)}_{H^1}
 +\norm{u^{(1)}(t)-u^{(2)}(t)}_\infty
 \leq C_{\nu_{\mathrm{dec}}}t^{-\nu_{\mathrm{dec}}},
 \qquad t\geq T_{\nu_{\mathrm{dec}}}.
\]
\end{corollary}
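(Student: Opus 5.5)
The plan is to apply \autoref{thm:higher} to each coefficient with the same profile $A$ and then use the triangle inequality. The key fact from \autoref{prop:UN} is that the profiles $U_N$, and hence $u_N$, are built from $\beta$ and $A$ alone. They are independent of $\gamma$ and of $\cW$. Thus for a fixed $N$ the approximation $u_N$ is literally the same function for $\cW_1$ and for $\cW_2$.

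First, fix $\nu_{\mathrm{dec}}>0$ and choose an integer $N\geq\max\{1,\nu_{\mathrm{dec}}\}$. Since $\cW_j\in\cS(\R)$, the Schwartz alternative in \autoref{thm:higher} is available for every $N$. Both $\cW_j$ satisfy \eqref{eq:W-assumption}, and the hypotheses \eqref{eq:support-separation} and \eqref{eq:effective-smallness} hold for each coefficient by assumption. We therefore obtain constants $C_N^{(j)}$, $T_N^{(j)}$ and integers $\mathfrak d_N^{(j)}$ such that
\[
 \norm{u^{(j)}(t)-u_N(t)}_{H^1}+\norm{u^{(j)}(t)-u_N(t)}_\infty
 \leq C_N^{(j)}t^{-N-1}(1+\log t)^{\mathfrak d_N^{(j)}},
 \qquad t\geq T_N^{(j)}.
\]
The same compact $K$ may be used for both coefficients, since $K$ depends only on $\supp A$ and $\gamma$.

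Second, we subtract the two estimates and use the triangle inequality. With $T=\max_j T_N^{(j)}$ and $\mathfrak d=\max_j\mathfrak d_N^{(j)}$, this gives the bound $Ct^{-N-1}(1+\log t)^{\mathfrak d}$. Since $N+1>\nu_{\mathrm{dec}}$, the logarithm is absorbed, leaving $C_{\nu_{\mathrm{dec}}}t^{-\nu_{\mathrm{dec}}}$ for all $t\geq T_{\nu_{\mathrm{dec}}}$.

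There is no real obstacle. The only point requiring care is that the shared background is genuinely identical, and not merely identical up to $\cW$-dependent corrections. This is exactly the independence statement in \autoref{prop:UN}: the recursion there is applied only to $\mathfrak R_0$. We also need the wave-operator solutions $u^{(j)}$ to be the solutions that \autoref{thm:higher} identifies, and that identification is part of the statement of \autoref{thm:higher}.
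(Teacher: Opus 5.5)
Your proposal is correct and matches the paper's proof: choose $N$ with $N+1>\nu_{\mathrm{dec}}$, note that the pure-cubic approximation $u_N$ from \autoref{prop:UN} is independent of $\cW$, apply \autoref{thm:higher} in its Schwartz case to each $u^{(j)}$, and absorb the logarithms into the power gap $t^{N+1-\nu_{\mathrm{dec}}}$ after the triangle inequality. Your choice $N\geq\max\{1,\nu_{\mathrm{dec}}\}$ only makes explicit the requirement $N\geq1$ of \autoref{thm:higher}, which the paper leaves implicit.
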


\begin{proof}
Choose an integer $N$ with $N+1>\nu_{\mathrm{dec}}$. The pure-cubic
approximation $u_N$ is
the same for both coefficients. Apply \autoref{thm:higher} to each solution,
use the triangle inequality, and absorb the logarithmic factors into the
strict power gap $t^{N+1-\nu_{\mathrm{dec}}}$.
\end{proof}

\begin{remark}\label{rem:higher-scope}
The support condition in \autoref{thm:higher} implies $A(-\gamma)=0$.
Thus this theorem and the nontrivial case of the inner theorem do not apply
to the same profile. The distinction is not merely one of scales. The inner
theorem requires interaction with the distinguished ray, while the global
outer estimate assumes that this interaction is absent. We do not claim a
matched inner--outer expansion for one solution.

For a general smooth profile, the same algebraic recursion determines
formal coefficients on every compact subset of
$\R\setminus\{-\gamma\}$ after multiplication by a cutoff that equals one
near that subset. The global error estimate proved here requires the full
support separation \eqref{eq:support-separation}. This observation is
only a formal coefficient identity and does not provide an asymptotic
approximation to a solution without support separation.
\end{remark}

\end{document}